\newtheorem{theorem}{Theorem}[section]
\newtheorem{lemma}[theorem]{Lemma}
\newtheorem{corollary}[theorem]{Corollary}
\newtheorem*{claim*}{Claim}
\theoremstyle{definition}
\newtheorem{example}[theorem]{Example}
\newtheorem{definition}[theorem]{Definition}
\newtheorem{remark}[theorem]{Remark}
\numberwithin{equation}{section}
\numberwithin{figure}{section}
\numberwithin{table}{section}
\def\Z{\mathbb Z}
\def\C{\mathbb C}
\newcommand{\nc}{\newcommand}
\nc{\calI}{\mathcal{I}}
\nc{\calF}{\mathcal{F}}
\nc{\SG}{\Sigma}
\nc{\frakS}{\mathfrak{S}}
\nc{\PCT}{\mathrm{PCT}}
\nc{\SPCT}{\mathrm{SPCT}}
\nc{\RT}{\mathrm{RT}}
\nc{\SRT}{\mathrm{SRT}}
\nc{\RCT}{\mathrm{RCT}}
\nc{\SRCT}{\mathrm{SRCT}}
\nc{\SYCT}{\mathrm{SYCT}}
\nc{\SPYCT}{\mathrm{SPYCT}}
\nc{\stan}{\mathrm{stan}}
\nc{\Span}{\mathrm{span}}
\nc{\comp}{\mathrm{comp}}
\nc{\rmst}{\mathrm{st}}
\nc{\bfw}{\mathbf{w}}
\nc{\bfv}{\mathbf{v}}
\nc{\Des}{\mathrm{Des}}
\nc{\set}{\mathrm{set}}
\nc{\wt}{\mathrm{wt}}
\nc{\ch}{\mathrm{ch}}
\nc{\id}{\mathrm{id}}
\nc{\Sym}{\mathrm{Sym}}
\nc{\Qsym}{\mathrm{QSym}}
\nc{\Nsym}{\mathrm{NSym}}
\nc{\sh}{\mathrm{sh}}
\nc{\bfS}{\mathbf{S}}
\nc{\bfm}{\mathbf{m}}
\nc{\hbfS}{\widehat{\mathbf{S}}}
\nc{\bfF}{\mathbf{F}}
\nc{\calS}{\mathcal{S}}
\nc{\hcalS}{\widehat{\mathcal{S}}}
\nc{\alphamax}{\alpha_{\rm max}}
\nc{\brho}{\overline{\rho}}
\nc{\calV}{\mathcal{V}}
\nc{\calX}{\mathcal{X}}
\nc{\calR}{\mathcal{R}}
\nc{\calG}{\mathcal{G}}
\nc{\tal}{\lambda(\alpha)}
\nc{\tbe}{\widetilde{\beta}}
\nc{\opi}{\overline{\pi}}
\nc{\rmtop}{\mathrm{top}}
\nc{\rad}{\mathrm{rad}}
\nc{\bfP}{\mathbf{P}}
\nc{\SET}{\mathrm{SET}}
\nc{\SIT}{\mathrm{SIT}}
\nc{\rev}{\mathrm{r}}
\nc{\Th}{\theta}
\nc{\htau}{\widehat{\tau}}
\nc{\hT}{\widehat{T}}
\nc{\mPhi}{\Phi}
\nc{\mPsi}{\Psi}
\nc{\hmPsi}{\widehat{\Psi}}
\nc{\mpsi}{\psi}
\nc{\mGam}{\Gamma}
\nc{\tcd}{\mathtt{cd}}
\nc{\trd}{\mathtt{rd}}
\nc{\trcd}{\mathtt{rcd}}
\nc{\rmr}{\mathrm{r}}
\nc{\rmc}{\mathrm{c}}
\nc{\rmt}{\mathrm{t}}
\nc{\bubact}{\,\scalebox{0.6}{$\bullet$}\,}
\nc{\hbubact}{\,\scalebox{0.6}{$\widehat{\bullet}$}\,}
\nc{\col}{\mathrm{col}}
\nc{\row}{\mathrm{row}}
\nc{\calE}{\mathcal{E}}
\nc{\calT}{\mathscr{T}}
\nc{\frakP}{\mathfrak{P}}
\nc{\sfT}{\mathsf{T}}
\nc{\calEsa}{\mathcal{E}^\sigma(\alpha)}
\nc{\tauC}{\tau_{\scalebox{0.5}{$C$}}}
\nc{\sytabC}{\sytab_{\scalebox{0.5}{$C$}}}
\nc{\bbfP}{\overline{\bfP}}
\nc{\pr}{\mathbf{pr}}
\nc{\Ups}{\Upsilon}
\nc{\pact}{\diamond}
\nc{\tauE}{\tau_{\scalebox{0.5}{$E$}}}
\nc{\tauF}{\tau_{\scalebox{0.5}{$F$}}}
\nc{\tauG}{\tau_{\scalebox{0.5}{$G$}}}
\nc{\rtE}{T_{\scalebox{0.5}{$E$}}}
\nc{\rtF}{T_{\scalebox{0.5}{$F$}}}
\nc{\rtG}{T_{\scalebox{0.5}{$G$}}}
\nc{\oPaE}{\overline{\Phi}_{\alpha_E}}
\nc{\oPaF}{\overline{\Phi}_{\alpha_F}}
\nc{\oPaG}{\overline{\Phi}_{\alpha_G}}
\nc{\tab}{\tau}
\nc{\sytab}{\widehat{\tau}}
\nc{\hatE}{\widehat{E}}
\nc{\hcalE}{\widehat{\calE}}
\nc{\hatC}{\widehat{C}}
\nc{\bal}{{\boldsymbol{\upalpha}}}
\nc{\bbe}{{\boldsymbol{\upbeta}}}
\nc{\bgam}{{\boldsymbol{\upgamma}}}
\nc{\bdel}{{\boldsymbol{\updelta}}}
\nc{\weakcon}{\odot}
\nc{\calB}{\mathcal{B}}
\nc{\calM}{\mathcal{M}}
\nc{\ocalM}{\overline{\mathcal{M}}}
\nc{\calN}{\mathcal{N}}
\nc{\g}{\mathfrak{g}}
\nc{\la}{\lambda}
\nc{\h}{\mathfrak{h}}
\nc{\te}{\widetilde{e}}
\nc{\tf}{\widetilde{f}}
\nc{\mf}{\mathfrak}
\nc{\cP}{\mathscr{P}}
\nc{\gl}{\mathfrak{gl}}
\nc{\ldalpha}{\lambda(\alpha)}
\nc{\SRIT}{\mathrm{SRIT}}
\nc{\re}{\mathrm{rev}}
\nc{\otau}{\overline{\tau}}
\nc{\rtop}{{\rm top}}
\nc{\sfc}{\mathsf{c}}
\nc{\sfr}{\mathsf{r}}
\nc{\tH}{\mathtt{H}}
\nc{\tS}{\mathtt{S}}
\nc{\tV}{\mathtt{V}}
\nc{\tA}{\mathtt{A}}
\nc{\otA}{\overline{\mathtt{A}}}
\nc{\frakG}{\mathfrak{G}}
\nc{\frakX}{\mathfrak{X}}
\nc{\sfcomp}{\mathsf{Comp}}
\nc{\sfp}{\mathsf{p}}
\nc{\calJ}{\mathcal{J}}
\nc{\calK}{\mathcal{K}}
\nc{\calL}{\mathcal{L}}
\nc{\Ext}{\mathrm{Ext}}
\nc{\soc}{\mathrm{soc}}
\nc{\calW}{\mathcal{W}}
\nc{\bfT}{\mathbf{T}}
\nc{\bfB}{\mathbf{B}}
\nc{\bfx}{\mathbf{x}}
\nc{\bfy}{\mathbf{y}}
\nc{\balalp}{\underline{{\boldsymbol{\upalpha}}}}
\nc{\balalpp}[1]{\underline{{\boldsymbol{\upalpha}}}_{(#1)}}
\nc{\TcalTt}{{T^{\calT}}}
\nc{\calTuT}{\calT^T}
\nc{\ovT}{\overline{T}}
\nc{\Hom}{\mathrm{Hom}}
\nc{\rmIm}{\mathrm{Im}}
\nc{\setA}[2]{\mathcal{P}(\tA_{#1;#2})}
\nc{\ebfw}[2]{\mathsf{end}(\bfw_{#1;#2})}
\nc{\bstau}[2]{{\boldsymbol{\tau}}_{#1;#2}}
\nc{\TLb}[1]{{T^\leftarrow_{#1}}}
\nc{\TLba}[2]{{T^\leftarrow_{#1;#2}}}
\nc{\TLbaa}[3]{{T^\leftarrow_{#1;#2;#3}}}
\nc{\tauf}{\boldsymbol{\tau}_{(i)}}
\nc{\taus}{\hat{\boldsymbol{\tau}}_{(i)}}
\nc{\SPCTsa}{\SPCT^\sigma(\alpha)}
\nc{\bfSsa}{\bfS_\alpha^\sigma}
\nc{\bfSsaC}{{\bfS}^\sigma_{\alpha,C}}
\nc{\hbfSsa}{\widehat{\bfS}_\alpha^\sigma}
\nc{\upineq}{\rotatebox{90}{$<$}}
\nc{\downineq}{\rotatebox{270}{$<$}}
\nc{\diagineq}{\rotatebox{135}{$<$}}
\nc{\taufj}{\boldsymbol{\tau}_{(j)}}
\nc{\tausj}{\hat{\boldsymbol{\tau}}_{(j)}}
\nc{\tauT}{\tau_{\bfT}}
\nc{\tauB}{\tau_{\bfB}}
\nc{\coker}{\mathrm{Coker}}
\nc{\sgn}{\mathrm{sgn}}
\nc{\supp}{\mathsf{supp}}
\nc{\rmw}{\mathrm{w}}
\nc{\urmw}{\underline{\mathrm{w}}}
\nc{\projco}{\Phi}
\nc{\injhull}{\epsilon}
\nc{\ra}{\rightarrow}
\nc{\opone}{\overline{\partial^1}}
\nc{\SGO}[2]{\SG_{#1;#2}}
\nc{\SGT}[2]{\SG'_{#1;#2}}
\nc{\longelto}[1]{w_0(#1)}
\nc{\longeltt}[2]{\boldsymbol{w_0}(#1;#2)}
\nc{\tJ}[2]{{\tt J}_{#1;#2}}
\nc{\htJ}[2]{\widehat{{\tt J}}_{#1;#2}}
\nc{\sigmatab}[3]{T_{#3;#2}(#1)}
\nc{\prodss}[2]{\boldsymbol{\Delta}_{(#1);#2}}
\nc{\prodsst}[3]{\boldsymbol{\Delta}^{(#1)}_{#2,#3}}
\nc{\prodsso}[1]{\Delta(#1)}
\nc{\mapu}[1]{\mathsf{m}(#1)}
\nc{\ssA}[2]{A^{1}_{#1;#2}}
\nc{\ssAz}[2]{A^{0}_{#1;#2}}
\nc{\mapeta}[1]{\eta_{#1}}
\nc{\sigmatabsh}[1]{T(#1)}
\nc{\mapsh}[2]{{#1}^{[#2]}}
\nc{\tphi}{\widetilde{\phi}}
\nc{\AZTDuj}{\left[u+2,u+|\tS'_{k_0}|\right]}
\nc{\AOTDuj}{A^1_{T(\deltau);j}}
\nc{\mapG}[1]{\kappa_{#1}}
\nc{\GuSG}{\mapG{u}(\SG^{(|\tS'_{k_0}|)}_{[u+2,\ell(\alpha)]})}
\nc{\GuSGn}[3]{\mapG{#1}\left((\SG_{#2})^{(#3)}_{[2,#2-1]}\right)}
\nc{\GuSGGn}[3]{\mapG{#1}\left((\SG_{#2+1})^{(#3)}_{[2,#2]}\right)}
\nc{\SGG}[2]{(\SG_{#1})_{#2}}
\nc{\SGGG}[3]{(\SG_{#1})_{#2}^{#3}}
\nc{\GuSGGG}[3]{\SG^{(#2)}_{#1}[#3]}
\nc{\subASG}[3]{\SG_{#1;#2;#3}}
\nc{\bdI}{\boldsymbol{I}}
\nc{\PBQ}[3]{\left(\SG_{#1}\right)_{[#2]}^{(#3)}}
\nc{\paralong}[1]{w_0(#1)}
\nc{\pqlong}[2]{w_0(#1;#2)}
\nc{\deltau}{\Delta_u}
\nc{\AZero}{A^0}
\nc{\shiftu}{{\theta}_u}
\nc{\DeltaOmega}[1]{\Delta(#1)}
\title[Homological properties of 0-Hecke modules $\calV_\alpha$]
{Homological properties of 0-Hecke modules for dual immaculate quasisymmetric functions}
\author{Seung-Il Choi$^{1}$, Young-Hun Kim${}^{1}$, Sun-Young Nam$^{2}$, Young-Tak Oh$^{2}$}
\thanks{${}^1$ Center for quantum structures in modules and spaces, Seoul National University, Republic of Korea; E-mail: ignatioschoi@snu.ac.kr, ykim.math@gmail.com.}
\thanks{${}^2$ Department of Mathematics, Sogang University, Republic of Korea; E-mail: synam.math@gmail.com, ytoh@sogang.ac.kr.}
\keywords{Quasisymmetric function, $0$-Hecke algebra, Projective presentation, Injective presentation, Ext-group}
\date{\today}
\subjclass[2020]{20C08, 05E05, 05E10}
\begin{document}

\begin{abstract}
Let $n$ be a nonnegative integer.
For each composition $\alpha$ of $n$, Berg, Bergeron, Saliola, Serrano, and Zabrocki introduced a cyclic indecomposable $H_n(0)$-module $\calV_\alpha$ with a dual immaculate quasisymmetric function as the image of the quasisymmetric characteristic.
In this paper, we study $\calV_\alpha$'s from the homological viewpoint.
To be precise, we construct a minimal projective presentation of $\calV_\alpha$ and a minimal injective presentation of $\calV_\alpha$ as well.
Using them, we compute $\Ext^1_{H_n(0)}(\calV_\alpha, \bfF_\beta)$ and $\Ext^1_{H_n(0)}( \bfF_\beta, \calV_\alpha)$, where $\bfF_\beta$ is the simple $H_n(0)$-module attached to a composition $\beta$ of $n$.
We also compute $\Ext_{H_n(0)}^i(\calV_\alpha,\calV_{\beta})$ when $i=0,1$ and $\beta \le_l \alpha$,
where $\le_l$ represents the lexicographic order on compositions.
\end{abstract}

\maketitle
\tableofcontents

\section{Introduction}
The first systematic work on the representation theory of the $0$-Hecke algebras was made by Norton~\cite{79Norton}, who completely classified all projective indecomposable modules and simple modules, up to isomorphism,
for all $0$-Hecke algebras of finite type.
In case of $H_n(0)$, the $0$-Hecke algebra of type $A_{n-1}$,
they are naturally parametrized by compositions of $n$.
For each composition $\alpha$ of $n$, let us denote by $\bfP_\alpha$ and $\bfF_\alpha$
the projective indecomposable module and the simple module corresponding $\alpha$, respectively
(see Subsection~\ref{subsec: PIM}).
These modules were again studied intensively in the 2000s (for instance, see~\cite{11Denton, 06HNT, 16Huang}).
In particular, Huang~\cite{16Huang} studied the induced modules $\bfP_{\bal}$ of projective indecomposable modules by using the combinatorial objects called {\it standard ribbon tableaux},
where $\bal$ in bold-face ranges over the set of generalized compositions.

In~\cite{96DKLT, 97KT}, it was shown that the representation theory of the 0-Hecke algebras of type $A$ has a deep connection to
the ring $\Qsym$ of quasisymmetric functions.
Letting $\calG_0(H_n(0))$ be the Grothendieck group of the category of finitely generated $H_n(0)$-modules,
their direct sum over all $n\ge 0$ endowed with the induction product is isomorphic to $\Qsym$ via the {\em quasisymmetric characteristic}
\begin{align*}
\ch : \bigoplus_{n \ge 0} \calG_0(H_n(0)) \ra \Qsym, \quad [\bfF_{\alpha}] \mapsto F_{\alpha}.
\end{align*}
Here, for a composition $\alpha$ of $n$, 
$[\bfF_\alpha]$ is the equivalence class of $\bfF_\alpha$ inside $\calG_0(H_n(0))$ and $F_{\alpha}$ is the fundamental quasisymmetric function attached to $\alpha$
(for more information, see Subsection~\ref{subsec: 0-Hecke alg}).

Suppose that $\alpha$ ranges over the set of all compositions of $n$.
In the mid-2010s, Berg, Bergeron, Saliola, Serrano, and Zabrocki~\cite{14BBSSZ} introduced
the {\it immaculate functions} $\frakS_\alpha$
by applying noncommutative Bernstein operators to the constant power series $1$,
the identity of the ring $\Nsym$ of noncommutative symmetric functions.
These functions form a basis of $\Nsym$.
Then, the authors defined the {\it dual immaculate function} $\frakS^\ast_\alpha$ as the quasisymmetric function dual to $\frakS_\alpha$ under the appropriate  pairing between $\Qsym$ and $\Nsym$,
thus $\frakS^\ast_\alpha$'s also form a basis of $\Qsym$.
Due to their nice properties, the immaculate and dual immaculate functions have since drawn the attention of many mathematicians
(see~\cite{17BBSSZ,16BSZ,16Camp,17Camp,16GY,17Grin,21MS}).
In a subsequent paper~\cite{15BBSSZ}, the same authors successfully construct a cyclic indecomposable $H_n(0)$-module $\calV_\alpha$ with $\ch(\calV_\alpha)=\frakS^\ast_\alpha$ by using the combinatorial objects called {\it standard immaculate tableaux}.
Although several notable properties have recently been revealed  in~\cite{20CKNO2, 21JKLO},
the structure of $\calV_\alpha$ is not yet well known, especially compared to $\frakS^\ast_\alpha$.

The studies of the $0$-Hecke algebras from the homological viewpoint can be found in~\cite{89Cab, 02DHT, 05Fayers}.
For type $A$, Duchamp, Hivert, and Thibon~\cite[Section 4]{02DHT} construct all nonisomorphic 2-dimensional indecomposable modules, and use this result to calculate $\Ext^1_{H_n(0)}(\bfF_{\alpha},\bfF_{\beta})$ for all compositions $\alpha, \beta$ of $n$.

Moreover, when $n \le 4$, they show that its {Poincar\'{e} series} is given by the $(\alpha,\beta)$ entry of the inverse of $(-q)$-Cartan matrix.
For all finite types, Fayers~\cite[Section 5]{05Fayers} shows that
$\dim \Ext^1_{\mathcal{\bullet}}(M,N) =1$ or $0$ for all simple modules $M$ and $N$.
He also classifies when the dimension equals $1$.
However, to the best knowledge of the authors,
little is known about Ext-groups other than simple (and projective) modules.

In this paper, we study homological properties of $\calV_\alpha$'s.
To be precise, we explicitly describe a minimal projective presentation and a minimal injective presentation of $\calV_\alpha$.
By employing these presentations, we calculate
\[
\Ext_{H_n(0)}^1(\calV_\alpha, \bfF_{\beta})
\quad \text{and} \quad
\Ext_{H_n(0)}^1(\bfF_{\beta}, \calV_\alpha).
\]
In addition, we calculate
\[
\Hom_{H_n(0)}(\calV_\alpha, \calV_\beta)
\quad \text{and} \quad
\Ext_{H_n(0)}^1(\calV_\alpha, \calV_\beta)
\]
for all $\beta \le_l \alpha$, where $\le_l$ represents the lexicographic order on compositions.
In the following, let us explain our results in more detail.

Let $\alpha = (\alpha_1,\alpha_2,\ldots, \alpha_{\ell(\alpha)})$ be a composition of $n$.
The first main result concerns a minimal projective presentation of $\calV_\alpha$.
The projective cover, $\projco : \bfP_{\alpha} \ra \calV_\alpha$, of $\calV_\alpha$ has already been provided in~\cite[Theorem 3.2]{20CKNO2}.
Let $\calI(\alpha) := \{1 \le i \le \ell(\alpha)-1 \mid \alpha_{i+1} \neq 1 \}$ and for each $i \in \calI(\alpha)$, let $\bal^{(i)}$ be the generalized composition
\[
(\alpha_1, \alpha_2, \ldots, \alpha_{i-1}, \alpha_i +1, \alpha_{i+1} - 1) \oplus (\alpha_{i+2} , \alpha_{i+3}, \ldots, \alpha_{\ell(\alpha)}).
\]
Then we construct a $\C$-linear map
\begin{equation*}
\begin{tikzcd}
\displaystyle
\partial_1: \bigoplus_{i \in \calI(\alpha)} \bfP_{\bal^{(i)}} \arrow[r] & \bfP_\alpha,
\end{tikzcd}
\end{equation*}
which turns out to be an $H_n(0)$-module homomorphism.
Additionally, we show that
\[
\ker (\projco) = \rmIm(\partial_1)
\quad \text{and} \quad
\ker(\partial_1) \subseteq \rad \left(\bigoplus_{i \in \calI(\alpha)} \bfP_{\bal^{(i)}} \right).
\]
Hence we obtain the following minimal projective presentation of $\calV_\alpha$
\begin{equation*}
\begin{tikzcd}
\displaystyle \bigoplus_{i \in \calI(\alpha)} \bfP_{\bal^{(i)}} \arrow[r, "\partial_1"] & \bfP_{\alpha} \arrow[r, "\Phi"] & \calV_\alpha \arrow[r] & 0,
\end{tikzcd}
\end{equation*}
which enables us to derive that
\[
\Ext_{H_n(0)}^1(\calV_\alpha,\bfF_{\beta})
\cong \begin{cases}
\mathbb C & \text{if $\beta \in \calJ(\alpha)$,}\\
0 & \text{otherwise}
\end{cases}
\]
with $\calJ(\alpha) := \bigcup_{i \in \calI(\alpha)} [\bal^{(i)}]$.
Here, given a generalized composition
$\bal = \alpha^{(1)} \oplus \alpha^{(2)} \oplus \cdots \oplus  \alpha^{(p)}$,
we are using the notation $[\bal]$ to denote the set of all compositions of the form
$$
\alpha^{(1)} \  \square \  \alpha^{(2)} \ \square \ \cdots \  \square  \ \alpha^{(p)},
$$
where $\square$ is the {\it concatenation} $\cdot$ or the {\it near concatenation} $\odot$ (Theorem~\ref{main thm for V}).

The second main result concerns a minimal injective presentation of $\calV_\alpha$.
Since $H_n(0)$ is a Frobenius algebra, every finitely generated injective $H_n(0)$-module is projective.
But, unlike the projective cover of $\calV_\alpha$, there are no known results for an injective hull of $\calV_\alpha$.
We consider the generalized composition
\begin{align*}
\balalp :=
(\alpha_{k_1} -1) \oplus (\alpha_{k_2}-1) \oplus \cdots \oplus (\alpha_{k_{m-1}}-1) \oplus (\alpha_{k_m},1^{\ell(\alpha)-1}),
\end{align*}
where
\[
\{k_1 < k_2 < \cdots < k_m\}=\{ 1\le i \le \ell(\alpha) : \alpha_{i} > 1\}.
\]
Then we construct an injective $H_n(0)$-module homomorphism $\injhull: \calV_\alpha \ra \bfP_{\balalp}$ and prove that it is an injective hull of $\calV_\alpha$, equivalently, $\soc(\bfP_{\balalp}) \subseteq \injhull(\calV_\alpha)$ (Theorem~\ref{Thm:injective hull V}).
The next step is to find a map $\partial^1: \bfP_{\balalp} \to \bdI$ with $\bdI$ injective such that
\begin{equation*}
\begin{tikzcd}
0 \arrow [r] & \calV_\alpha \arrow[r, "\injhull"] & \bfP_{\balalp} \arrow[r, "\partial^1"] & \bdI
\end{tikzcd}
\end{equation*}
is a minimal injective presentation.
To do this, to each index $1 \leq j \leq m$ we assign the generalized composition
$$
\balalpp{j}: =
\begin{cases}
(\alpha_{k_1}-1) \oplus \cdots \oplus  (\alpha_{k_j} - 2) \oplus \cdots \oplus
(\alpha_{k_m}, 1^{\ell(\alpha)-k_j+1}) \oplus (1^{k_j-1}) & \text{ if }  1 \leq j < m, \\
(\alpha_{k_1}-1 ) \oplus \cdots \oplus  (\alpha_{k_{m-1}}-1) \oplus \left((\alpha_{k_m}-1,1^{\ell(\alpha)-k_j+1}) \cdot (1^{k_j-1})\right) & \text{ if }  j  = m.
\end{cases}
$$
Then we construct a $\C$-linear map
\begin{equation*}
\begin{tikzcd}
\partial^1: \bfP_{\balalp} \arrow[r] & \displaystyle
\bdI := \bigoplus_{1 \leq j \leq m} \bfP_{\balalpp{j}},
\end{tikzcd}
\end{equation*}
which turns out to be an $H_n(0)$-module homomorphism.
We also show that
\[
{\rm Im}(\injhull) = \ker(\partial^1)
\quad \text{ and } \quad
\soc\left( \bdI \right)  \subseteq \mathrm{Im}(\partial^1).
\]
Hence we have the following minimal injective presentation of $\calV_\alpha$:
\begin{displaymath}
\begin{tikzcd}
0 \arrow[r] & \calV_\alpha \arrow[r, "\injhull"] & \bfP_{\balalp} \arrow[r, "\partial^1"] &
\displaystyle \bdI
\end{tikzcd}
\end{displaymath}
Let $\Omega^{-1}(\calV_\alpha)$ be the \emph{cosyzygy module} of $\calV_\alpha$,
the cokernel of $\injhull$.
Applying the formula
$\Ext_{H_n(0)}^1(\bfF_{\beta},\calV_\alpha) \cong {\rm Hom}_{H_n(0)}(\bfF_{\beta}, \Omega^{-1}(\calV_\alpha))$
to this minimal injective presentation enables us to derive that
\[
\Ext_{H_n(0)}^1(\bfF_{\beta},\calV_\alpha)
\cong \begin{cases}
\mathbb C^{[\calL(\alpha):\beta^\rmr]} & \text{if $\beta^\rmr \in \calL(\alpha)$,}\\
0 & \text{otherwise,}
\end{cases}
\]
where
$\calL(\alpha)$ is the multiset $\bigcup_{1 \leq j \leq m} [\balalpp{j}]$, $\beta^\rmr$ the reverse composition of $\beta$, and $[\calL(\alpha):\beta^\rmr]$ the multiplicity of $\beta^\rmr$ in $\calL(\alpha)$
(Theorem~\ref{Thm: Main Section4}).

The third main result
concerns $\Ext^i_{H_n(0)}(\calV_\alpha, \calV_\beta)$ for $i=0,1$.
We show that whenever $\beta \le_l \alpha$,
\begin{equation*}
\Ext_{H_n(0)}^1(\calV_\alpha,\calV_{\beta}) = 0
\qquad \text{and} \qquad
\Ext_{H_n(0)}^0(\calV_\alpha, \calV_{\beta})
\cong\begin{cases}
\mathbb C & \text{if } \beta = \alpha, \\
0 & \text{otherwise}.
\end{cases}
\end{equation*}
Given a finite dimensional $H_n(0)$-module $M$, we say that $M$ is \emph{rigid} if $\Ext_{H_n(0)}^1(M,M)=0$ and
\emph{essentially rigid} if ${\rm Hom}_{H_n(0)}(\Omega(M),M)=0$, where $\Omega(M)$ is the \emph{syzygy module} of $M$.
With this definition, we also prove that
$\calV_\alpha$ is essentially rigid for every composition $\alpha$ of $n$
(Theorem~\ref{thm: Ext for V V}).
In case of $\beta >_l \alpha$, the structure of $\Ext^i_{H_n(0)}(\calV_\alpha, \calV_\beta)$ for $i=0,1$
is still beyond our understanding.
For instance, each map in $\Ext^0_{H_n(0)}(\calV_\alpha, \calV_\beta)$ is completely determined by the value of
a cyclic generator of $\calV_\alpha$.
However, at the moment it seems difficult to characterize all possible values the generator can have.
Instead, we view $\Ext^0_{H_n(0)}(\calV_\alpha, \calV_\beta)$ as
the set of $H_n(0)$-module homomorphisms from $\bfP_{\alpha}$ to $\calV_{\beta}$ which vanish on $\Omega(\calV_{\alpha})$.
The most important reason for taking this view is that we know a minimal generating set of $\calV_\alpha$
as well as a combinatorial description of $\dim_{\mathbb C}\Ext^0_{H_n(0)}(\bfP_{\alpha}, \calV_\beta)$.
An approach in this direction is given in Theorem~\ref{Prop: Hom V V}.

This paper is organized as follows.
In Section~\ref{sec: preliminaries}, we introduce the prerequisites
on the $0$-Hecke algebra including the quasisymmetric characteristic, standard ribbon tableaux, standard immaculate tableaux and $H_n(0)$-modules associated to such tableaux.
In Section~\ref{sec: extensions of V by F},
we provide a minimal projective presentation of $\calV_\alpha$ and $\Ext^1_{H_n(0)}(\calV_\alpha, \bfF_\beta)$.
And, in Section~\ref{Sec: extensions of F by V},
we provide a minimal injective presentation of $\calV_\alpha$ and $\Ext^1_{H_n(0)}(\bfF_\beta,\calV_\alpha)$.
In Section~\ref{sec: Ext1 VV}, we investigate $\Ext^i_{H_n(0)}(\calV_\alpha, \calV_\beta)$ for $i=0,1$.
Section~\ref{Sec: Proof of Theorems} is devoted to proving the first and second main results of this paper.
In the last section, we provide some future directions to pursue.

\section{Preliminaries}\label{sec: preliminaries}

In this section, $n$ denotes a nonnegative integer.
Define $[n]$ to be $\{1,2,\ldots, n\}$ if $n > 0$ or $\emptyset$ otherwise.
In addition, we set $[-1]:=\emptyset$.
For positive integers $i\le j$, set $[i,j]:=\{i,i+1,\ldots, j\}$.

\subsection{Compositions and their diagrams}\label{subsec: comp and diag}

A \emph{composition} $\alpha$ of a nonnegative integer $n$, denoted by $\alpha \models n$, is a finite ordered list of positive integers $(\alpha_1, \alpha_2, \ldots, \alpha_k)$ satisfying $\sum_{i=1}^k \alpha_i = n$.
For each $1 \le i \le k$, let us call $\alpha_i$ a \emph{part} of $\alpha$. And we call $k =: \ell(\alpha)$ the \emph{length} of $\alpha$ and $n =:|\alpha|$ the \emph{size} of $\alpha$. For convenience we define the empty composition $\emptyset$ to be the unique composition of size and length $0$.
A \emph{generalized composition} $\bal$ of $n$ is a formal sum $\alpha^{(1)} \oplus \alpha^{(2)} \oplus \cdots \oplus  \alpha^{(k)}$,
where $\alpha^{(i)} \models n_i$ for positive integers $n_i$'s with $n_1 + n_2 + \cdots + n_k = n$.

For $\alpha = (\alpha_1, \alpha_2, \ldots, \alpha_{\ell(\alpha)}) \models n$,
we define the \emph{composition diagram} $\tcd(\alpha)$ of $\alpha$ as a left-justified array of $n$ boxes where the $i$th row from the top has $\alpha_i$ boxes for $1 \le i \le k$.
We also define the \emph{ribbon diagram} $\trd(\alpha)$ of $\alpha$ by the connected skew diagram without $2 \times 2$ boxes, such that the $i$th column from the left has $\alpha_i$ boxes.
Then, for a generalized composition $\bal$ of $n$, we define the {\em generalized ribbon diagram} $\trd(\bal)$ of $\bal$ to be the skew diagram whose connected components are $\trd(\alpha^{(1)}), \trd(\alpha^{(2)}), \ldots, \trd(\alpha^{(k)})$ such that $\trd(\alpha^{(i+1)})$ is strictly to the northeast of $\trd(\alpha^{(i)})$ for $i = 1, 2, \ldots, k-1$.
For example, if $\alpha = (3,1,2)$ and $\bal = (2,1) \oplus (1,1)$, then
\begin{displaymath}
\tcd(\alpha) =
\begin{array}{l}
\begin{ytableau}
~ & ~ & ~\\
~ \\
~ & ~
\end{ytableau}
\end{array},
\quad
\trd(\alpha) =
\begin{array}{l}
\begin{ytableau}
\none & \none & ~\\
~ & ~ & ~ \\
~  \\
~
\end{ytableau}
\end{array},
\quad \text{and} \quad
\trd(\bal) =
\begin{array}{l}
\begin{ytableau}
\none & \none & ~ & ~\\
~ & ~  \\
~
\end{ytableau}
\end{array}.
\end{displaymath}

Given $\alpha = (\alpha_1, \alpha_2, \ldots,\alpha_{\ell(\alpha)}) \models n$ and $I = \{i_1 < i_2 < \cdots < i_k\} \subset [n-1]$,
let
\begin{align*}
&\set(\alpha) := \{\alpha_1,\alpha_1+\alpha_2,\ldots, \alpha_1 + \alpha_2 + \cdots + \alpha_{\ell(\alpha)-1}\}, \\
&\comp(I) := (i_1,i_2 - i_1,\ldots,n-i_k).
\end{align*}
The set of compositions of $n$ is in bijection with the set of subsets of $[n-1]$ under the correspondence $\alpha \mapsto \set(\alpha)$ (or $I \mapsto \comp(I)$).
Let $\alpha^\rmr$ denote the composition $(\alpha_{\ell(\alpha)}, \alpha_{\ell(\alpha)-1},  \ldots, \alpha_1)$.

For compositions $\alpha = (\alpha_{1}, \alpha_{2}, \ldots, \alpha_{k})$ and $\beta= (\beta_{1}, \beta_{2}, \ldots, \beta_{l})$,
let $\alpha \cdot \beta$ be the \emph{concatenation}
and $\alpha \odot \beta$ the \emph{near concatenation} of $\alpha$ and $\beta$.
In other words,
$ \alpha \cdot \beta = (\alpha_1, \alpha_2, \ldots, \alpha_k, \beta_1, \beta_2, \ldots, \beta_l)$ and
$\alpha \odot \beta = (\alpha_1,\ldots, \alpha_{k-1},\alpha_k +
\beta_1,\beta_2, \ldots, \beta_l)$.
For a generalized composition
$\bal = \alpha^{(1)} \oplus \alpha^{(2)} \oplus \cdots \oplus  \alpha^{(m)}$,
define
$$
[\bal] := \{
\alpha^{(1)} \  \square \  \alpha^{(2)} \ \square \ \cdots \  \square  \ \alpha^{(m)}
\mid
\square = \text{$\cdot$ or $\odot$}
\}.
$$

\subsection{The $0$-Hecke algebra and the quasisymmetric characteristic}\label{subsec: 0-Hecke alg}
The symmetric group $\SG_n$ is generated by simple transpositions $s_i := (i \  i \hspace{-.5ex} + \hspace{-.5ex} 1)$ with $1 \le i \le n-1$.
An expression for $\sigma \in \SG_n$ of the form $s_{i_1} s_{i_2} \cdots s_{i_p}$ that uses the minimal number of simple transpositions is called a \emph{reduced expression} for $\sigma$.
The number of simple transpositions in any reduced expression for $\sigma$, denoted by $\ell(\sigma)$, is called the \emph{length} of $\sigma$.

The $0$-Hecke algebra $H_n(0)$ is the $\C$-algebra generated by $\pi_1, \pi_2, \ldots,\pi_{n-1}$ subject to the following relations:
\begin{align*}
\pi_i^2 &= \pi_i \quad \text{for $1\le i \le n-1$},\\
\pi_i \pi_{i+1} \pi_i &= \pi_{i+1} \pi_i \pi_{i+1}  \quad \text{for $1\le i \le n-2$},\\
\pi_i \pi_j &=\pi_j \pi_i \quad \text{if $|i-j| \ge 2$}.
\end{align*}
Pick up any reduced expression $s_{i_1} s_{i_2} \cdots s_{i_p}$ for a permutation $\sigma \in \SG_n$.
It is well known that the element $\pi_{\sigma} := \pi_{i_1} \pi_{i_2} \cdots \pi_{i_p}$ is independent of the choice of reduced expressions
and $\{\pi_\sigma \mid \sigma \in \SG_n\}$ is a basis for $H_n(0)$.
For later use, set
\begin{align*}
\pi_{[i, j]}  := \pi_{i} \pi_{i+1} \cdots \pi_{j}
\quad \text{and} \quad
\pi_{[i, j]^{\mathrm{r}}}  := \pi_{j} \pi_{j-1} \cdots \pi_{i}
\end{align*}
for all $1 \le i \le j \le n-1$.

Let $\calR(H_n(0))$ denote the $\Z$-span of (representatives of) the isomorphism classes of finite dimensional representations of $H_n(0)$.
The isomorphism class corresponding to an $H_n(0)$-module $M$ will be denoted by $[M]$.
The \emph{Grothendieck group} $\calG_0(H_n(0))$ is the quotient of $\calR(H_n(0))$ modulo the relations $[M] = [M'] + [M'']$ whenever there exists a short exact sequence $0 \ra M' \ra M \ra M'' \ra 0$. 
The equivalence classes of irreducible representations of $H_n(0)$ form a free $\Z$-basis for $\calG_0(H_n(0))$. 
Let
\[
\calG := \bigoplus_{n \ge 0} \calG_0(H_n(0)).
\]
According to \cite{79Norton}, there are $2^{n-1}$ distinct irreducible representations of $H_n(0)$.
They are naturally indexed by compositions of $n$. Let $\bfF_{\alpha}$ denote the $1$-dimensional $\C$-vector space corresponding to $\alpha \models n$, spanned by a vector $v_{\alpha}$.
For each $1\le i \le n-1$, define an action of the generator $\pi_i$ of $H_n(0)$ as follows:
\[
\pi_i \cdot v_\alpha = \begin{cases}
0 & i \in \set(\alpha),\\
v_\alpha & i \notin \set(\alpha).
\end{cases}
\]
Then $\bfF_\alpha$ is an irreducible $1$-dimensional $H_n(0)$-representation.

In the following, let us review the connection between $\calG$ and the ring $\Qsym$ of quasisymmetric functions.
Quasisymmetric functions are power series of bounded degree in variables $x_{1},x_{2},x_{3},\ldots$  with coefficients in $\Z$, which are shift invariant in the sense that the coefficient of the monomial $x_{1}^{\alpha _{1}}x_{2}^{\alpha _{2}}\cdots x_{k}^{\alpha _{k}}$ is equal to the coefficient of the monomial $x_{i_{1}}^{\alpha _{1}}x_{i_{2}}^{\alpha _{2}}\cdots x_{i_{k}}^{\alpha _{k}}$ for any strictly increasing sequence of positive integers $i_{1}<i_{2}<\cdots <i_{k}$ indexing the variables and any positive integer sequence $(\alpha _{1},\alpha _{2},\ldots ,\alpha _{k})$ of exponents.

Given a composition $\alpha$, the \emph{fundamental quasisymmetric function} $F_\alpha$ is defined by $F_\emptyset = 1$ and
\[
F_\alpha = \sum_{\substack{1 \le i_1 \le i_2 \le \cdots \le i_k \\ i_j < i_{j+1} \text{ if } j \in \set(\alpha)}} x_{i_1} x_{i_2} \cdots x_{i_k}.
\]
It is well known that $\{F_\alpha \mid \text{$\alpha$ is a composition}\}$ is a basis for $\Qsym$.
In~\cite{96DKLT}, Duchamp, Krob, Leclerc, and Thibon show that, when $\calG$ is equipped with induction product, the linear map
\begin{align*}
\ch : \calG \ra \Qsym, \quad [\bfF_{\alpha}] \mapsto F_{\alpha},
\end{align*}
called the \emph{quasisymmetric characteristic}, is a ring isomorphism.

\subsection{Projective modules of the $0$-Hecke algebra}\label{subsec: PIM}

We begin this subsection by recalling
that $H_n(0)$ is a Frobenius algebra.
Hence it is self-injective, so that finitely generated projective and injective modules coincide
(see \cite[Proposition 4.1]{02DHT}, \cite[Proposition 4.1]{05Fayers}, and \cite[Proposition 1.6.2]{91Benson}).

It was Norton~\cite{79Norton} who first classified all projective indecomposable modules of $H_n(0)$ up to isomorphism,
which bijectively correspond to compositions of $n$.
Later Huang~\cite{16Huang} provided a combinatorial description of these modules and their induction products as well
by using standard ribbon tableaux of generalized composition shape.
We here review Huang's description very briefly.

\begin{definition}\label{def: SRT}
For a generalized composition $\bal$ of $n$, a \emph{standard ribbon tableau} (SRT) of shape $\bal$ is a filling of $\trd(\bal)$ with $\{1,2,\ldots,n\}$ such that
the entries are all distinct,
the entries in each row are increasing from left to right, and
the entries in each column are increasing from top to bottom.
\end{definition}
Let $\SRT(\bal)$ denote the set of all $\SRT$x of shape $\bal$.
For $T \in \SRT(\bal)$, let
$$
\Des(T) := \{i \in [n-1] \mid \text{$i$ appears weakly below $i+1$ in $T$}  \}.
$$
Define an $H_n(0)$-action on the $\C$-span of $\SRT(\bal)$ by 
\begin{align}\label{eq: action for ribbon}
\pi_i \cdot T = \begin{cases}
T & \text{if $i \notin \Des(T)$},\\
0 & \text{if $i$ and $i+1$ are in the same row of $T$},\\
s_i \cdot T & \text{if $i$ appears strictly below $i+1$ in $T$}
\end{cases}
\end{align}
for $1\le i \le n-1$ and $T \in \SRT(\bal)$.
Here $s_i \cdot T$ is obtained from $T$ by swapping $i$ and $i+1$.
The resulting module is denoted by $\bfP_\bal$.
It is known that the set $\{\bfP_\alpha \mid \alpha \models n\}$ forms a complete family of non-isomorphic projective indecomposable $H_n(0)$-modules and $\bfP_\alpha /\rad(\bfP_\alpha) \cong \bfF_\alpha$, where $\rad(\bfP_\alpha)$ is the radical of $\bfP_\alpha$ (for details, see~\cite{16Huang, 79Norton}).

\begin{remark}
It should be pointed out that the ribbon diagram and $H_n(0)$-action used here are slightly different from those in Huang's work \cite{16Huang}.
He describes the $H_n(0)$-action on $\bfP_\bal$ in terms of 
$\opi_i$'s, where $\opi_i= \pi_i -1$. 
On the other hand, we use $\pi_i$'s because 
the $H_n(0)$-action on $\calV_\alpha$ is described in terms of $\pi_i$'s.
This leads us to adjust Huang's ribbon diagram to the form of $\trd(\bal)$.
\end{remark}

Given any generalized composition $\bal$, let $T_\bal \in \SRT(\bal)$ be the $\SRT$ obtained by filling $\trd(\bal)$
with entries $1, 2, \ldots, n$ from top to bottom and from left to right.
Since $\bfP_{\bal}$ is cyclically generated by $T_\bal$, we call $T_\bal$ the \emph{source tableau} of $\bfP_\bal$.
For any $\SRT$ $T$, let $\bfw(T)$ be the word obtained by reading the entries from left to right starting with the bottom row.
Using this reading, Huang~\cite{16Huang} shows the following result.

\begin{theorem}{\rm (\cite[Theorem 3.3]{16Huang})}
\label{thm: bfP isom to calP}
Let $\bal$ be a generalized composition of $n$.
Then $\bfP_\bal$ is isomorphic to $\bigoplus_{\beta \in [\bal]} \bfP_\beta$ as an $H_n(0)$-module.
\end{theorem}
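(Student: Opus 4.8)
The plan is to prove that $\bfP_{\bal} \cong \bigoplus_{\beta \in [\bal]} \bfP_\beta$ by exhibiting an explicit decomposition of $\SRT(\bal)$ into pieces, each of which carries a submodule isomorphic to some $\bfP_\beta$ with $\beta \in [\bal]$. First I would set up the combinatorial bookkeeping: write $\bal = \alpha^{(1)} \oplus \alpha^{(2)} \oplus \cdots \oplus \alpha^{(m)}$, let $n_t = |\alpha^{(t)}|$, and for each $T \in \SRT(\bal)$ record, for every index $t$ separating consecutive components, whether the largest entry of the part of $T$ lying on $\trd(\alpha^{(t)})$ sits strictly below (in the generalized ribbon sense) or weakly above the smallest entry of the part lying on $\trd(\alpha^{(t+1)})$ — equivalently whether the ``junction'' descent between block $t$ and block $t+1$ is present. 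This data is exactly a choice of $\square \in \{\cdot, \odot\}$ at each of the $m-1$ junctions, hence a composition $\beta \in [\bal]$, so we obtain a set map $\gamma\colon \SRT(\bal) \to [\bal]$.

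Next I would argue that the $H_n(0)$-action given in \eqref{eq: action for ribbon} is compatible with the fibers of $\gamma$ in the following sense: $\pi_i \cdot T$ either equals $T$, equals $0$, or equals $s_i \cdot T$, and in the last case $s_i$ only swaps $i$ and $i+1$, which never crosses a block boundary in a way that changes whether the relevant junction entries are in ``below'' versus ``same row/above'' position — one checks that if the swap affected a junction, then $i$ and $i+1$ would have to be the two junction entries, but in that configuration $\pi_i$ acts either as the identity or kills $T$. Hence each fiber $\gamma^{-1}(\beta)$ spans an $H_n(0)$-submodule $M_\beta$ of $\bfP_\bal$, and $\bfP_\bal = \bigoplus_{\beta \in [\bal]} M_\beta$ as $H_n(0)$-modules. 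It remains to identify $M_\beta \cong \bfP_\beta$. For this I would produce, for each $\beta \in [\bal]$, a shape-preserving bijection $\SRT(\trd(\bal))_\beta \leftrightarrow \SRT(\trd(\beta))$ — intuitively, ``sliding'' the components of the generalized ribbon together according to the $\square$-choices recorded by $\beta$ — and verify that this bijection intertwines the descent sets and therefore the $H_n(0)$-actions defined by \eqref{eq: action for ribbon}. Since $\bfP_\beta$ is by definition the $\C$-span of $\SRT(\beta)$ with that action, this gives $M_\beta \cong \bfP_\beta$ and completes the proof.

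The main obstacle I anticipate is the verification that the sliding bijection is genuinely $\Des$-preserving and action-equivariant at the junction cells, and more subtly that the decomposition into fibers is a direct sum of \emph{submodules} rather than merely a vector-space decomposition. The delicate point is the third case of \eqref{eq: action for ribbon}, $\pi_i \cdot T = s_i \cdot T$ when $i$ is strictly below $i+1$: one must confirm that after the swap the junction-descent pattern is unchanged, so that $s_i \cdot T$ lies in the same fiber. This reduces to a short case analysis on the positions of $i$ and $i+1$ relative to the block decomposition — either both lie in the same block $\alpha^{(t)}$ (then the junctions are untouched), or they straddle a junction (then $i$ and $i+1$ are forced to be exactly the maximal entry of block $t$ and minimal entry of block $t+1$, and ``$i$ strictly below $i+1$'' together with the generalized-ribbon geometry makes $\pi_i$ act trivially, a contradiction). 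Once this is pinned down, the rest is routine.

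I would remark that an alternative, perhaps cleaner, route is available if one is willing to invoke the quasisymmetric characteristic: Huang's description and the formula $\ch(\bfP_\bal) = \ch\bigl(\bigoplus_{\beta \in [\bal]} \bfP_\beta\bigr)$ already hold at the level of Grothendieck groups because $\ch[\bfP_\alpha]$ is the ribbon Schur function $s_\alpha$ and $s_{\bal} = \sum_{\beta \in [\bal]} s_\beta$ by the standard ribbon-gluing identity; but since we want a genuine module isomorphism and not merely equality of characteristics, the explicit fiberwise argument above is the one I would carry out in full.
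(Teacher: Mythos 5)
There is a genuine gap, and it sinks the argument at the first step: the fibers of your map $\gamma$ do not span $H_n(0)$-submodules of $\bfP_\bal$, so the claimed decomposition $\bfP_\bal = \bigoplus_{\beta}M_\beta$ does not exist. The error is in the case analysis at the junction. You argue that if $i$ and $i+1$ straddle a junction, then $\pi_i$ must act as the identity or by zero. But in $\trd(\bal)$ the component $\trd(\alpha^{(t+1)})$ sits strictly to the northeast of $\trd(\alpha^{(t)})$, so whenever $i$ lies in block $t$ and $i+1$ lies in block $t+1$, $i$ appears \emph{strictly below} $i+1$, and \eqref{eq: action for ribbon} gives $\pi_i\cdot T = s_i\cdot T$, not $T$ or $0$; moreover $s_i\cdot T$ has the opposite junction configuration at that seam. (It is also not true that such $i,i+1$ are forced to be the maximal entry of block $t$ and the minimal entry of block $t+1$.)

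The minimal counterexample is $\bal = (1)\oplus(1)$, $n=2$, where $[\bal]=\{(1,1),(2)\}$. Let $T_1\in\SRT(\bal)$ place $1$ in the lower-left component and $2$ in the upper-right one, and let $T_2 = s_1\cdot T_1$. Since $1$ is strictly below $2$ in $T_1$, we get $\pi_1\cdot T_1 = T_2$ and $\pi_1\cdot T_2 = T_2$. So $\C T_1$ is not $\pi_1$-stable, and no partition of $\{T_1,T_2\}$ into two singletons yields two submodules; in fact the unique decomposition into submodules is $\bfP_\bal = \C(T_1-T_2)\oplus\C T_2$, which is not spanned by subsets of $\SRT(\bal)$. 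This also shows the ``sliding'' bijection you propose at the end (the paper's map $\chi_\bal$ in Subsection \ref{Subsec: Proof Par0 is injective hull}, determined by $\bfw(T)=\bfw(T')$) is a bijection of sets but not an $H_n(0)$-map: $\chi_\bal(T_1)$ lies in $\bfP_{(1,1)}$, where $\pi_1$ acts by $0$, while $\pi_1\cdot T_1 = T_2\neq 0$. The isomorphism in Huang's theorem therefore cannot be realized by permuting the tableau basis; one needs a genuinely triangular change of basis built from the weak-order structure that $\bfw$ transports between $\SRT(\bal)$ and $\bigsqcup_{\beta\in[\bal]}\SRT(\beta)$ (this is how the theorem is used in the proof of Lemma \ref{lem: char of tab in T^i}). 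Your final remark is correct that the ribbon-gluing identity for characteristics only gives equality in the Grothendieck group, so it cannot substitute for the missing argument either.
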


For later use, for every generalized composition $\bal$ of $n$, we define a partial order $\le$ on $\SRT(\bal)$ by
\begin{align*}
T \le T'
\quad \text{if and only if} \quad
T' = \pi_\sigma \cdot T \quad \text{for some $\sigma \in \SG_n$}.
\end{align*}
As usual, whenever $T \le T'$, the notation $[T, T']$ denotes the interval $\{U \in \SRT(\bal) \mid T \le U \le T'\}$.

\subsection{The $H_n(0)$-action on standard immaculate tableaux}\label{0-Hecke action on V}

Noncommutative Bernstein operators were introduced by Berg, Bergeron, Saliola, Serrano, and Zabrocki~\cite{14BBSSZ}.
Applied to the identity of the ring $\Nsym$ of noncommutative symmetric functions,
they yield the \emph{immaculate functions}, which form a basis of $\Nsym$.
Soon after, using the combinatorial objects called standard immaculate tableaux, they constructed indecomposable $H_n(0)$-modules whose quasisymmetric characteristics are the quasisymmetric functions which are dual to immaculate functions (see~\cite{15BBSSZ}).

\begin{definition}\label{def: SIT}
Let $\alpha \models n$. A \emph{standard immaculate tableau} (SIT) of shape $\alpha$  is a filling $\calT$ of the composition diagram $\tcd(\alpha)$ with $\{1,2,\ldots,n\}$ such that the entries are all distinct, the entries in each row increase from left to right, and the entries in the first column increase from top to bottom.
\end{definition}

We denote the set of all SITx of shape $\alpha$ by $\SIT(\alpha)$.
For $\calT \in \SIT(\alpha)$, let
$$
\Des(\calT) := \{i \in [n-1] \mid \text{$i$ appears strictly above $i+1$ in $\calT$}  \}.
$$
Define an $H_n(0)$-action on $\C$-span of $\SIT(\alpha)$ by
\begin{align}\label{eq: action on SIT}
\pi_i \cdot \calT = \begin{cases}
\calT & \text{if $i \notin \Des(\calT)$},\\
0 & \text{if $i$ and $i+1$ are in the first column of $\calT$},\\
s_i \cdot \calT & \text{otherwise}
\end{cases}
\end{align}
for $1\le i \le n-1$ and $\calT \in \SIT(\alpha)$.
Here $s_i \cdot \calT$ is obtained from $\calT$ by swapping $i$ and $i+1$.
The resulting module is denoted by $\calV_\alpha$.

Let $\calT_\alpha \in \SIT(\alpha)$ be the SIT obtained by filling $\tcd(\alpha)$
with entries $1, 2, \ldots, n$ from left to right and from top to bottom.

\begin{theorem}{\rm (\cite{15BBSSZ})}\label{Thm: V is cyclic-generated}
For $\alpha \models n$, $\calV_\alpha$ is a cyclic indecomposable $H_n(0)$-module generated by $\calT_\alpha$
whose quasisymmetric characteristic is the dual immaculate quasisymmetric function $\frakS^*_\alpha$.
\end{theorem}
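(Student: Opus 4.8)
The statement bundles four claims: that formula~\eqref{eq: action on SIT} really defines an $H_n(0)$-action, that the resulting module is cyclic on $\calT_\alpha$, that it is indecomposable, and that $\ch(\calV_\alpha)=\frakS^\ast_\alpha$. The plan is to verify these in turn. For well-definedness one checks the three defining relations of $H_n(0)$ on each basis vector $\calT\in\SIT(\alpha)$: the idempotent relation $\pi_i^2=\pi_i$ is immediate from the three cases of~\eqref{eq: action on SIT}; the commutation $\pi_i\pi_j=\pi_j\pi_i$ for $|i-j|\ge2$ is routine because the pairs $\{i,i+1\}$ and $\{j,j+1\}$ are disjoint, so neither exchange can affect whether the other pair shares a row or lies in the first column; and the braid relation $\pi_i\pi_{i+1}\pi_i=\pi_{i+1}\pi_i\pi_{i+1}$ is settled by a finite case analysis over the mutual positions of $i,i+1,i+2$ in $\calT$ (which pairs share a row, which entries lie in the first column, and their relative vertical order).

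For cyclicity, the key point is that $\calT_\alpha$ is characterized among standard immaculate tableaux by the property that every entry $i$ appears weakly above $i+1$; equivalently, the word obtained by reading $\calT_\alpha$ row by row from top to bottom (left to right within each row) is the identity permutation. Hence, given $\calT\neq\calT_\alpha$, some index $i$ has $i$ strictly below $i+1$ in $\calT$. Since the first column is strictly increasing, $i$ and $i+1$ cannot both lie in it, so exchanging them produces a valid $\calT'\in\SIT(\alpha)$ with $i\in\Des(\calT')$ and $i,i+1$ not both in the first column; by~\eqref{eq: action on SIT}, $\pi_i\cdot\calT'=s_i\cdot\calT'=\calT$. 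As this exchange removes exactly one inversion of the reading word, induction on the number of such inversions gives $\calT\in H_n(0)\cdot\calT_\alpha$, whence $\calV_\alpha=H_n(0)\cdot\calT_\alpha$.

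For the characteristic I would run the usual filtration argument for tableau modules. Order $\SIT(\alpha)$ by $\calT\le\calT'$ iff $\calT'=\pi_\sigma\cdot\calT$ for some $\sigma\in\SG_n$; one verifies that this is a partial order and that $\pi_i\cdot\calT\in\{0,\calT\}\cup\{U : U>\calT\}$ for all $i$ and $\calT$ (whenever $\pi_i$ actually moves $\calT$ the reading-word inversion number strictly increases). Choosing a linear extension $\calT^{(1)}>\calT^{(2)}>\cdots$ of this order, the subspaces $V_k:=\Span_\C\{\calT^{(1)},\dots,\calT^{(k)}\}$ form an ascending chain of $H_n(0)$-submodules, and $\pi_i$ acts on each quotient $V_k/V_{k-1}$ as $0$ if $i\in\Des(\calT^{(k)})$ and as the identity otherwise; thus $V_k/V_{k-1}\cong\bfF_{\comp(\Des(\calT^{(k)}))}$. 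Passing to $\calG_0(H_n(0))$ and applying $\ch$ gives $\ch(\calV_\alpha)=\sum_{\calT\in\SIT(\alpha)}F_{\comp(\Des(\calT))}$, which is exactly the combinatorial expansion of $\frakS^\ast_\alpha$ proved in~\cite{14BBSSZ}.

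Finally, for indecomposability I would show that $\calV_\alpha$ has one-dimensional top $\bfF_\alpha$; since $\calV_\alpha$ is cyclic, a simple top forces indecomposability (a direct-sum decomposition would split the top, and the summand with vanishing top vanishes by Nakayama's lemma). First, $N:=\Span_\C\{\calT\in\SIT(\alpha):\calT\neq\calT_\alpha\}$ is a submodule: by~\eqref{eq: action on SIT}, the only way $\pi_i\cdot\calT$ could leave $N$ for some $\calT\neq\calT_\alpha$ would be to have $\pi_i\cdot\calT=s_i\cdot\calT=\calT_\alpha$, which requires $i\in\Des(\calT)$ and hence $\calT=s_i\cdot\calT_\alpha$; but then $\calT$ is either not a valid SIT (if $i,i+1$ share a row in $\calT_\alpha$) or fails to satisfy $i\in\Des(\calT)$ (if $i$ is strictly above $i+1$ in $\calT_\alpha$). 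Since $\Des(\calT_\alpha)=\set(\alpha)$, this gives $\calV_\alpha/N\cong\bfF_\alpha$. To upgrade this to $\calV_\alpha/\rad(\calV_\alpha)\cong\bfF_\alpha$, I would show that \emph{every} maximal submodule $M$ satisfies $\calV_\alpha/M\cong\bfF_\alpha$: writing $\calV_\alpha/M\cong\bfF_\beta$, the relations $\pi_k\cdot\calT_\alpha=\calT_\alpha$ for $k\notin\set(\alpha)$ force $\set(\beta)\subseteq\set(\alpha)$, and if this were strict one picks $i\in\set(\alpha)\setminus\set(\beta)$, writes $i=\alpha_1+\cdots+\alpha_j$, and applies the string $\pi_{i-1},\pi_{i-2},\dots$ to $\pi_i\cdot\calT_\alpha$ (each generator acting as the identity on $\bfF_\beta$, since its index lies outside $\set(\alpha)\supseteq\set(\beta)$) until some generator annihilates the current tableau, forcing the image of $\calT_\alpha$ in $\bfF_\beta$ to vanish, a contradiction. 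The braid-relation verification is the most tedious step, but I expect the genuinely delicate part to be this last point: pinning the top down precisely as $\bfF_\alpha$, rather than merely establishing that it is simple.
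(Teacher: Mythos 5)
The paper does not supply a proof of this statement; Theorem~\ref{Thm: V is cyclic-generated} is quoted from \cite{15BBSSZ} as given. Your reconstruction is correct and follows what is, to the best of my knowledge, essentially the argument of the cited source: verify the three defining relations directly (the braid relation being the tedious one); get cyclicity by applying descent-reducing moves to an arbitrary $\calT$ and inducting on the inversion number of its reading word; filter by a linear extension of the partial order induced by the $\pi$-action and read off $\ch(\calV_\alpha)=\sum_{\calT\in\SIT(\alpha)}F_{\comp(\Des(\calT))}=\frakS^\ast_\alpha$; and deduce indecomposability from cyclic plus simple top.

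Two small points in the last step are worth tightening. First, when showing $\set(\beta)=\set(\alpha)$: the generators in the string $\pi_i,\pi_{i-1},\dots,\pi_{m_{j-1}+1}$ all fix $v_\beta$, but for two different reasons---$\pi_i$ because $i\notin\set(\beta)$ by the standing assumption, and the remaining $\pi_{i-1},\dots,\pi_{m_{j-1}+1}$ because their indices lie in the open interval $(m_{j-1},m_j)$, hence outside $\set(\alpha)\supseteq\set(\beta)$. Your phrasing attributes both to the same cause. Second, your description of the string implicitly assumes $\alpha_j>1$ (so $s_i\cdot\calT_\alpha$ is a genuine SIT and the string is nonempty); when $\alpha_j=1$ the entries $i,i+1$ lie in the first column of $\calT_\alpha$, so $\pi_i\cdot\calT_\alpha=0$ outright and the contradiction is immediate. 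Neither remark affects correctness, but both deserve a sentence.
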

\medskip

\noindent {\bf Convention.}
Regardless of a ribbon diagram or a composition diagram,
columns are numbered from left to right.
To avoid possible confusion, we adopt the following notation:
\begin{enumerate}[label = {\rm (\roman*)}, leftmargin = 5.0ex, itemsep = 1ex]
\item
Let $T$ be a filling of the ribbon diagram $\trd(\bal)$. 

\begin{enumerate}[label = -, leftmargin = 1ex, itemsep = 0.5ex]
\item $T^i_j$ = the entry at the $i$th box from the top of the $j$th column 

\item $T_j^{-1}$ = the entry at the bottommost box in the $j$th column 

\item $T^\bullet_j$ = the set of all entries in the $j$th column
\end{enumerate}
\item
Let $\calT$ be a filling of the composition diagram $\tcd(\alpha)$.
\begin{enumerate}[label = -, leftmargin = 3ex, itemsep = 0.5ex]
\item $\calT_{i,j}$ = the entry at the box in the $i$th row (from the top)  and in the $j$th column
\end{enumerate}
\end{enumerate}

\section{A minimal projective presentation of $\calV_\alpha$ and $\Ext^1_{H_n(0)}(\calV_\alpha, \bfF_\beta)$}
\label{sec: extensions of V by F}
From now on, $\alpha$ denotes an arbitrarily chosen composition of $n$.
We here construct a minimal projective presentation of $\calV_\alpha$.
Using this, we compute $\Ext^1_{H_n(0)}(\calV_\alpha, \bfF_\beta)$ for each $\beta \models n$.

Firstly, let us introduce necessary terminologies and notation.
Let $A,B$ be finitely generated $H_n(0)$-modules.
A surjective $H_n(0)$-module homomorphism $f:A\to B$ is called an \emph{essential epimorphism} if an $H_n(0)$-module homomorphism $g: X\to A$ is surjective
whenever $f \circ g:X\to B$ is surjective.
A \emph{projective cover} of $A$ is an essential epimorphism $f:P\to A$ with $P$ projective, which always exists and is unique up to isomorphism.
It is well known that $f:P\to A$ is an essential epimorphism
if and only if $\ker(f) \subset \rad(P)$
(for instance, see \cite[Proposition I.3.6]{95ARS}).
For simplicity, when $f$ is clear in the context,
we just write $\Omega(A)$ for $\ker(f)$ and call it the \emph{syzygy module} of $A$.
An exact sequence
\begin{equation*}
\begin{tikzcd}
\displaystyle P_1 \arrow[r,"\partial_1"] & P_0 \arrow[r,"\epsilon"] & A \arrow[r] & 0
\end{tikzcd}
\end{equation*}
with projective modules $P_0$ and $P_1$ is called a \emph{minimal projective presentation} if the $H_n(0)$-module homomorphisms
$\epsilon: P_0 \ra A$
and
$\partial_1: P_1 \ra \Omega(A)$ are projective covers of $A$ and $\Omega(A)$, respectively.

Next, let us review the projective cover of $\calV_\alpha$ obtained in~\cite{20CKNO2}.
Given any $T \in \SRT(\alpha)$, let $\calT_T$ be the filling of $\tcd(\alpha)$ given by $(\calT_T)_{i,j} = T^{j}_{i}$.
Then we define a $\C$-linear map $\projco : \bfP_{\alpha} \ra \calV_\alpha$ by
\begin{align}\label{eq: def of phi}
\projco(T) =
\begin{cases}
\calT_T & \text{if $\calT_T$ is an SIT,}\\
0 & \text{otherwise.}
\end{cases}
\end{align}
For example, if $\alpha = (1,2,2)$ and
\[
T_1 =
\begin{array}{l}
\begin{ytableau}
\none & \none & 4 \\
\none & 2 & 5 \\
1 & 3
\end{ytableau}
\end{array}
\in \SRT(\alpha)
\quad \text{and} \quad
T_2 =
\begin{array}{l}
\begin{ytableau}
\none & \none & 4 \\
\none & 1 & 5 \\
2 & 3
\end{ytableau}
\end{array}
\in \SRT(\alpha),
\]
then
\[
\calT_{T_1} =
\begin{array}{l}
\begin{ytableau}
1 & \none \\
2 & 3 \\
4 & 5
\end{ytableau}
\end{array}
\in \SIT(\alpha)
\quad \text{and} \quad
\calT_{T_2} =
\begin{array}{l}
\begin{ytableau}
2 & \none \\
1 & 3 \\
4 & 5
\end{ytableau}
\end{array}
\notin \SIT(\alpha).
\]
Therefore, $\projco(T_1) = \calT_{T_1}$ and  $\projco(T_2) = 0$.

\begin{theorem}{\rm (\cite[Theorem 3.2]{20CKNO2})}\label{thm: V hom ima of P}
For $\alpha \models n$, $\projco : \bfP_{\alpha} \ra \calV_\alpha$ is a projective cover of $\calV_\alpha$.
\end{theorem}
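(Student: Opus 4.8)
The plan is to establish three facts and combine them: (i) $\projco$ is an $H_n(0)$-module homomorphism; (ii) $\projco$ is surjective; and (iii) $\ker(\projco)\subseteq\rad(\bfP_\alpha)$. Granting these, $\projco$ is a surjection from a projective module whose kernel sits inside the radical, hence an essential epimorphism, hence a projective cover. Steps (ii) and (iii) are quick. For (ii): filling the ribbon column by column, one sees that $T \mapsto \calT_T$ carries the source tableau $T_\alpha$ to the row-filled tableau $\calT_\alpha$, which is an SIT since its first column reads $1,\ 1+\alpha_1,\ 1+\alpha_1+\alpha_2,\dots$; thus $\projco(T_\alpha)=\calT_\alpha$, and since $\calV_\alpha$ is generated by $\calT_\alpha$ over $H_n(0)$ (Theorem~\ref{Thm: V is cyclic-generated}) and $\projco$ is a homomorphism by (i), the image of $\projco$ contains $H_n(0)\cdot\calT_\alpha=\calV_\alpha$. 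For (iii): $\bfP_\alpha$ is an indecomposable projective module, so $\rad(\bfP_\alpha)$ is its unique maximal submodule (recall $\bfP_\alpha/\rad(\bfP_\alpha)\cong\bfF_\alpha$); since $\projco(T_\alpha)=\calT_\alpha\neq 0$, the submodule $\ker(\projco)$ is proper and is therefore contained in $\rad(\bfP_\alpha)$.

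The substance of the proof is (i). The crucial device is the dictionary encoded in $(\calT_T)_{i,j}=T^j_i$: an entry occupies the $c$-th box from the top of column $j$ of $T$ exactly when it occupies row $j$, column $c$ of $\calT_T$. In particular the first column of $\calT_T$ is precisely the list of topmost entries of the successive columns of $T$, and $\calT_T$ is an SIT if and only if that list is strictly increasing from left to right — the remaining defining conditions of an SIT (distinct entries, rows increasing) transfer for free from the SRT-conditions on $T$ (distinct entries, columns increasing). I would combine this with the elementary geometric fact that in $\trd(\alpha)$, for $p<q$ every box of column $q$ lies weakly above every box of column $p$; it follows that in any $T\in\SRT(\alpha)$, if $i$ appears strictly below $i+1$ then $i$ and $i+1$ occupy distinct columns $p<q$ of $T$, while if $i$ appears strictly above $i+1$ then the column of $i$ is weakly to the right of that of $i+1$.

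Armed with these, I would verify $\projco(\pi_i\cdot T)=\pi_i\cdot\projco(T)$ by a case analysis on the relative position of $i$ and $i+1$ in $T$, matching the three clauses of the SRT-action \eqref{eq: action for ribbon} against the three clauses of the SIT-action \eqref{eq: action on SIT}. If $i$ is strictly above $i+1$ in $T$, both sides fix the tableau: the dictionary together with the geometric fact gives $i\notin\Des(\calT_T)$. If $i$ and $i+1$ lie in the same row of $T$, they occupy the top of some column $p$ and the bottom of column $p+1$; then $\pi_i\cdot T=0$, and when $\calT_T$ is an SIT the strict increase of its first column forces $\alpha_{p+1}=1$, so $i$ and $i+1$ are both in the first column of $\calT_T$ and $\pi_i\cdot\calT_T=0$ as well (and when $\calT_T$ is not an SIT both sides are $0$ for trivial reasons). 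If $i$ is strictly below $i+1$ in $T$, then $T':=s_i\cdot T\in\SRT(\alpha)$ and $\calT_{T'}=s_i\cdot\calT_T$; a short check of how the swap of $i$ and $i+1$ acts on the list of column-tops of $T$ shows that the SIT-status of $\calT_T$ and of $\calT_{T'}$ lines up exactly with the trichotomy in \eqref{eq: action on SIT} — the delicate point being that when $i$ and $i+1$ are both column-tops one is forced into consecutive columns, which is precisely what makes the vanishing clause consistent on both sides. I expect this bookkeeping to be the main obstacle: ``rows of $T$'' and ``rows of $\calT_T$'' are unrelated notions, and one must handle with care the degenerate configurations — parts equal to $1$, consecutive first-column entries — where the single non-automatic SIT-condition interacts with the ribbon geometry. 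Everything else is routine.
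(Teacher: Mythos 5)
The paper cites this theorem from the authors' earlier work \cite{20CKNO2} without reproducing a proof, so there is no in-text argument to compare against; I assess only correctness. Your argument is correct. Steps (ii) and (iii) are exactly right: the source tableau is filled column by column, so its transpose is the row-filled SIT $\calT_\alpha$, a cyclic generator; and since $\bfP_\alpha$ is projective indecomposable, $\rad(\bfP_\alpha)$ is its unique maximal submodule, whence $\projco(T_\alpha)\neq 0$ already forces $\ker(\projco)\subseteq\rad(\bfP_\alpha)$. For (i), the transposition dictionary together with the ribbon geometry (columns to the right of $T$ sit weakly above columns to the left) correctly matches the two opposite descent conventions in \eqref{eq: action for ribbon} and \eqref{eq: action on SIT}, and you have located the only two delicate configurations: if $i$ and $i+1$ share a row of $T$ (top of column $p$, bottom of column $p+1$) and $\calT_T$ is an SIT, then strict increase of the column-tops squeezes the top and bottom of column $p+1$ to coincide, so $\alpha_{p+1}=1$ and both entries land in the first column of $\calT_T$; and if $i$ is strictly below $i+1$ in $T$ and both are column-tops, strict increase again forces adjacent columns (no integer lies strictly between $i$ and $i+1$), so both land in the first column and both actions vanish, while $\calT_{s_i\cdot T}$ has its column-top list with $i+1$ strictly left of $i$ and thus can never be an SIT. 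The remaining subcases of the third clause transfer the SIT status unchanged, as you indicate. No gap.
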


Now, let us construct a projective cover of $\Omega(\calV_\alpha)$ for each $\alpha \models n$.
To do this, we provide necessary notation.
For each integer $0\le i \le \ell(\alpha)-1$, we set $m_i$ to be $\sum_{j = 1}^{i}\alpha_j$ for $i > 0$ and $m_0 = 0$.
Let
\[
\calI(\alpha) := \{1 \le i \le \ell(\alpha)-1 \mid \alpha_{i+1} \neq 1 \}.
\]
Given $i \in \calI(\alpha)$, let
\[
T^{(i)}_{\alpha} := \pi_{[m_{i-1} + 1, m_{i}]} \cdot T_\alpha
\]
and
\[
\bal^{(i)} := (\alpha_1, \alpha_2, \ldots, \alpha_{i-1}, \alpha_i +1, \alpha_{i+1} - 1) \oplus (\alpha_{i+2} , \alpha_{i+3}, \ldots, \alpha_{\ell(\alpha)}).
\]

Given an SRT $\tau$ of shape $\bal^{(i)}$ $(i \in \calI(\alpha))$,
define $L(\tau)$ to be the filling of $\trd(\alpha)$ whose entries in each column are increasing from top to bottom and whose columns are given as follows: for $1 \le p \le \ell(\alpha)$,
\begin{align}\label{eq: def of L(tau)}
L(\tau)_p^\bullet =
\begin{cases}
\tau_i^\bullet \setminus \{\tau_i^1\} & \text{if $p = i$,}\\
\tau_{i+1}^\bullet \cup \{\tau_i^1\} & \text{if $p = i+1$,}\\
\tau_p^\bullet & \text{otherwise.}\\
\end{cases}
\end{align}

\begin{example}
\ytableausetup{boxsize=1em}

For
$\tau_1 =
\begin{ytableau}
\none & 3 \\
\none & 4 \\
1 & 5 \\
2
\end{ytableau}$ \
and 
$\tau_2 =
\begin{ytableau}
\none & 1 \\
\none & 2 \\
3 & 5 \\
4
\end{ytableau},$
we have
$L(\tau_1) =
\begin{ytableau}
\none & 1 \\
\none & 3 \\
\none & 4 \\
2 & 5
\end{ytableau}$ \
and
$L(\tau_2) =
\begin{ytableau}
\none & 1 \\
\none & 2 \\
\none & 3 \\
4 & 5
\end{ytableau}$.
\end{example}

For each $i \in \calI(\alpha)$, we define a $\C$-linear map $ \partial_1^{(i)}: \bfP_{\bal^{(i)}} \ra H_n(0) \cdot T^{(i)}_{\alpha}$ by
\[
\partial_1^{(i)} (\tau) = \begin{cases}
L(\tau) & \text{if $L(\tau) \in \SRT(\alpha)$,}\\
0 & \text{otherwise.}
\end{cases}
\]
Then we define a $\C$-linear map $\partial_1 : \bigoplus_{i \in \calI(\alpha)} \bfP_{\bal^{(i)}} \rightarrow \bfP_\alpha$ by
$$
\partial_1 := \sum_{i \in \calI(\alpha)} \partial_1^{(i)}.
$$

\begin{theorem}\label{main thm for V}
{\rm (This will be proven in Subsection~\ref{subsec: proof of proj. presentation}.)}
Let $\alpha$ be a composition of $n$.

\begin{enumerate}[label = {\rm (\alph*)}]
\item
$\rmIm(\partial_1) = \Omega(\calV_\alpha)$ and
$\partial_1 : \bigoplus_{i \in \calI(\alpha)} \bfP_{\bal^{(i)}} \rightarrow \Omega(\calV_\alpha)$ is a projective cover of $\Omega(\calV_\alpha)$.

\item
Let
$\calJ(\alpha) := \bigcup_{i \in \calI(\alpha)} [\bal^{(i)}]$.
Then we have
\[
\Ext_{H_n(0)}^1(\calV_\alpha,\bfF_{\beta})
\cong \begin{cases}
\mathbb C & \text{if $\beta \in \calJ(\alpha)$,} \\
0 & \text{otherwise.}
\end{cases}
\]
\end{enumerate}
\end{theorem}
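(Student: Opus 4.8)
The plan is to establish part (a) first and then deduce part (b) as a formal consequence. For part (a), I would proceed in three stages. \emph{Stage 1: $\partial_1$ is a well-defined $H_n(0)$-module homomorphism.} This requires checking that for each $i \in \calI(\alpha)$ the map $\partial_1^{(i)}$ intertwines the $\pi_j$-actions on $\bfP_{\bal^{(i)}}$ and on $\bfP_\alpha$; the essential combinatorial input is that the column-redistribution operation $L(-)$ of \eqref{eq: def of L(tau)} commutes with the swap $s_j$ whenever $j$ acts by a swap, sends same-column pairs to same-column pairs, and respects descent sets. One must track carefully how moving the top entry $\tau_i^1$ of column $i$ into column $i+1$ affects the relative vertical positions of $j$ and $j+1$; the boundary cases are $j \in \{m_{i-1}, m_i\}$, where columns $i$ and $i+1$ meet. \emph{Stage 2: $\rmIm(\partial_1) = \ker(\projco) = \Omega(\calV_\alpha)$.} By \cref{thm: V hom ima of P} it suffices to identify $\ker(\projco)$. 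An SRT $T \in \SRT(\alpha)$ lies in $\ker(\projco)$ precisely when $\calT_T$ fails to be an SIT, i.e. when some first-column entry exceeds an entry to its right in an earlier row; one shows every such $T$ lies in the span of the $L(\tau)$'s by locating the smallest violating row $i$ and exhibiting a preimage $\tau$ of shape $\bal^{(i)}$, while conversely every $L(\tau)$ visibly violates the SIT condition at row $i$ (the entry $\tau_i^1$ now sits in column $i+1$, row $i$ say, above a smaller first-column entry). A dimension count, or an inductive argument on the poset $(\SRT(\alpha), \le)$, closes the inclusion. \emph{Stage 3: minimality, i.e. $\ker(\partial_1) \subseteq \rad\big(\bigoplus_i \bfP_{\bal^{(i)}}\big)$.} Equivalently, $\partial_1$ is an essential epimorphism onto $\Omega(\calV_\alpha)$. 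Here I would use \cref{thm: bfP isom to calP} to decompose each $\bfP_{\bal^{(i)}}$ into indecomposable projectives $\bfP_\beta$, $\beta \in [\bal^{(i)}]$, note that $\bfP_\beta/\rad \cong \bfF_\beta$, and check that the images under $\partial_1$ of the respective source tableaux are $\C$-linearly independent modulo $\rad(\bfP_\alpha)$ together with the images of the other summands — or, more cleanly, verify directly that no proper sum of the $\bfP_{\bal^{(i)}}$'s (or proper submodule) already surjects onto $\Omega(\calV_\alpha)$, by exhibiting for each $i$ an element of $\Omega(\calV_\alpha)$ hit only by the $i$-th summand.

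Granting (a), part (b) is standard homological algebra. Applying $\Hom_{H_n(0)}(-, \bfF_\beta)$ to the minimal projective presentation
\begin{equation*}
\begin{tikzcd}
\displaystyle\bigoplus_{i \in \calI(\alpha)} \bfP_{\bal^{(i)}} \arrow[r, "\partial_1"] & \bfP_\alpha \arrow[r, "\projco"] & \calV_\alpha \arrow[r] & 0
\end{tikzcd}
\end{equation*}
yields $\Ext^1_{H_n(0)}(\calV_\alpha, \bfF_\beta) \cong \coker\!\big(\Hom(\bfP_\alpha, \bfF_\beta) \to \Hom(\bigoplus_i \bfP_{\bal^{(i)}}, \bfF_\beta)\big)$; because the presentation is minimal, the connecting map $\partial_1^*$ is the zero map (its image lies in $\Hom(\rad, \bfF_\beta) = 0$), so $\Ext^1_{H_n(0)}(\calV_\alpha, \bfF_\beta) \cong \Hom_{H_n(0)}\big(\bigoplus_{i \in \calI(\alpha)} \bfP_{\bal^{(i)}}, \bfF_\beta\big)$. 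Now $\Hom_{H_n(0)}(\bfP_\gamma, \bfF_\beta) \cong \delta_{\gamma,\beta}\,\C$ for compositions $\gamma, \beta$, and by \cref{thm: bfP isom to calP} we have $\bigoplus_{i \in \calI(\alpha)} \bfP_{\bal^{(i)}} \cong \bigoplus_{i \in \calI(\alpha)} \bigoplus_{\gamma \in [\bal^{(i)}]} \bfP_\gamma$. Hence the right-hand side has dimension equal to the number of pairs $(i, \gamma)$ with $\gamma = \beta$, i.e. the multiplicity of $\beta$ in the multiset $\bigcup_{i} [\bal^{(i)}]$. The final point needed is that this multiplicity is at most $1$, so that the answer is $\C$ if $\beta \in \calJ(\alpha)$ and $0$ otherwise: one checks that the sets $[\bal^{(i)}]$ for distinct $i \in \calI(\alpha)$ are pairwise disjoint — indeed a composition in $[\bal^{(i)}]$ determines $i$ as, roughly, the location governed by the block structure $(\ldots, \alpha_i + 1, \alpha_{i+1} - 1)$, which is recoverable from the composition together with knowledge of $\alpha$ — and that within a single $[\bal^{(i)}]$ no composition is repeated (distinct choices of $\square \in \{\cdot, \odot\}$ at each $\oplus$ give distinct compositions, as near-concatenation versus concatenation changes the length).

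The main obstacle is Stage 1 together with the forward inclusion $\rmIm(\partial_1) \subseteq \Omega(\calV_\alpha)$ in Stage 2 — in other words, the purely combinatorial verification that $L(-)$ is compatible with the $0$-Hecke action and lands in the kernel. The action formulas \eqref{eq: action for ribbon} and \eqref{eq: action on SIT} are defined by cases (fixed / killed / swapped), and $L$ relocates one entry across a column boundary, so a careful case analysis is unavoidable; the subtle cases are exactly those indices $j$ for which the relative position of $j$ and $j+1$ in $\tau$ differs from their relative position in $L(\tau)$, which can only happen for $j$ adjacent to the boundary between columns $i$ and $i+1$ or involving the relocated entry $\tau_i^1$. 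I expect the cleanest route is to phrase $L$ as a composite of elementary "jeu-de-taquin"-type slides and verify $H_n(0)$-equivariance one slide at a time. Minimality (Stage 3) and the disjointness statement in (b) are comparatively routine once the poset/decomposition machinery of \cref{thm: bfP isom to calP} is in place.
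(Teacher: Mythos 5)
Your overall strategy tracks the paper's: for part (a) you propose (i) an $H_n(0)$-equivariance check for each $\partial_1^{(i)}$, (ii) the identification $\rmIm(\partial_1) = \ker(\projco) = \Omega(\calV_\alpha)$, and (iii) minimality $\ker(\partial_1) \subseteq \rad\big(\bigoplus_i \bfP_{\bal^{(i)}}\big)$; and you deduce part (b) by applying $\Hom(-,\bfF_\beta)$, observing $\partial_1^*$ vanishes for a minimal presentation, and computing the multiplicity of $\bfP_\beta$ in $P_1$ via Huang's decomposition (\cref{thm: bfP isom to calP}) plus a multiplicity-one check. Stages 1--2 and the whole of part (b) are close in spirit to the paper's Lemma~\ref{lem: phi_i is epic}, Lemma~\ref{lem: char of tab in T^i}, and the paper's argument for (b).

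The genuine gap is Stage 3, which is the technical heart of the theorem and is left essentially unaddressed. Your second suggested route --- ``verify directly that no proper sum of the $\bfP_{\bal^{(i)}}$'s (or proper submodule) already surjects onto $\Omega(\calV_\alpha)$, by exhibiting for each $i$ an element hit only by the $i$-th summand'' --- is at the wrong granularity and does not establish the required inclusion. Each $\bfP_{\bal^{(i)}}$ is \emph{decomposable} (by \cref{thm: bfP isom to calP} it splits into two indecomposables $\bfP_{\beta^{(1)}} \oplus \bfP_{\beta^{(2)}}$ when $i < \ell(\alpha)-1$), so its top is two-dimensional and its source tableau $\tau_{\bal^{(i)}}$ is not simply related to a single simple quotient. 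To use the ``no proper direct summand surjects'' criterion you would have to (i) refine to the indecomposable level $\bfP_\gamma$, $\gamma \in [\bal^{(i)}]$, (ii) first prove that these $\bfP_\gamma$ are pairwise non-isomorphic (so that every direct summand of $P_1$ is a coordinate subsum, ruling out ``diagonal'' summands) --- an observation the paper only makes inside its proof of (b), not before (a) --- and (iii) then actually carry out the check for each $\gamma$, which you have not done. The paper instead takes a direct route: it constructs explicit intervals $[\boldsymbol{\tau}_{(i)}, \hat{\boldsymbol{\tau}}_{(i)}] \subset \SRT(\bal^{(i)})$, shows their complements span a subspace $\bfB \subseteq \rad(P_1)$ (Lemma~\ref{lem: phi_i is essential}), and then proves $\ker(\partial_1) \subseteq \bfB$ by a linear-independence analysis of the images $\partial_1(\tau)$ for $\tau$ in the intervals, distinguishing the contributions from different $i$ by tracking the column-$j$ top entries $\partial_1(\tau)_j^1$. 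Some argument of this kind is unavoidable, because a priori nothing prevents a linear combination of tableaux from the ``tops'' of several $\bfP_{\bal^{(i)}}$'s from lying in $\ker(\partial_1)$, and that is exactly what must be excluded. Your proposal names the problem but does not solve it; as written, Stage 3 does not go through.

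One further small point: in Stage 2 your description of $\ker(\projco)$ (``some first-column entry exceeds an entry to its right in an earlier row'') is not quite the right condition. Since the rows of $\calT_T$ automatically increase (they come from columns of $T$), the only way $\calT_T$ fails to be an SIT is that the first-column entries $T_1^1, T_2^1, \ldots, T_{\ell(\alpha)}^1$ fail to increase, i.e.\ $T_p^1 > T_{p+1}^1$ for some $p$; this is the condition the paper works with in Lemma~\ref{lem: char of tab in T^i}, and getting it exactly right matters because it is what makes the construction of $L(\tau)$ land in $\Omega(\calV_\alpha)$.
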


\begin{example}
Let $\alpha = (1,2,1)$.
Then, we have that $\calI(\alpha) = \{1\}$ and $\bal^{(1)} = (2,1) \oplus (1)$.

(a) The map $\partial_1: \bfP_{(2,1) \oplus (1)} \ra \bfP_{(1,2,1)}$ is illustrated in {\sc Figure}~\ref{fig: partial_1 example},
where the entries $i$ in red in each $\SRT$ $T$ are being used to indicate that $\pi_i \cdot T = 0$.
\begin{figure}[ht]
\begin{tikzpicture}
\ytableausetup{boxsize=0.85em}
\def \hpp {8}

\def \hp {1.4}
\def \vp {1.8}

%domain and codomain
\node at (0.15*\hpp, -2.8*\vp) {\large $\bfP_{(2,1) \oplus (1)}$};
\node at (\hpp, -2.8*\vp) {\large $\bfP_{(1,2,1)}$};

\draw [|->] (3*\hp, \vp) -- (3.75*\hp, \vp);

\draw [dotted, red, line width = 0.4mm]
(0.5*\hp, -2.5*\vp) -- (-2.6*\hp +1*\hp, -0.5*\vp) -- (-2.6*\hp +1*\hp, 1.45*\vp) -- (-1.575*\hp +1*\hp, 1.45*\vp) -- (-0.925*\hp + 1*\hp, 0.95*\vp) -- (-0.925*\hp +1*\hp, -0.5*\vp) -- (2.3*\hp +1*\hp, -0.5*\vp) -- (2.3*\hp +1*\hp, -1.3*\vp) -- (2*\hp, -2.5*\vp) -- (0.5*\hp, -2.5*\vp);

\node[] at (-0.2*\hp,-2.4*\vp) {\large $\ker(\partial_1)$};

%SRT( (2,1) + (1) )
\draw [dotted, blue, line width = 0.4mm]
(-0.3*\hp +1*\hp, 2.4*\vp) -- (-1.7*\hp +1*\hp, 2.4*\vp) -- (-1.7*\hp +1*\hp, 1.7*\vp) -- (-0.8*\hp +1*\hp, 1*\vp) -- (-0.8*\hp +1*\hp, -0.4*\vp) -- (1.3*\hp +1*\hp, -0.4*\vp) -- (1.3*\hp +1*\hp, 1.1*\vp) -- (-0.3*\hp +1*\hp, 2.4*\vp);

%%3th floor
\node[] at (0, 2*\vp) {$\begin{ytableau}
\none & \none & *(red!5) 4 \\
*(red!5) 1 & *(red!5) 3 \\
*(red!5) 2
\end{ytableau}$};

%%%loop action
\node at (0.4*\hp, 2*\vp)  {} edge [out=35,in=325, loop] ();
\node at (0.9*\hp, 2*\vp) {$\pi_1$};

%%%outgoing action
\draw [->] (-0.2*\hp, 1.65*\vp) --  (-0.6*\hp, 1.4*\vp);
\node at (-0.6*\hp, 1.6*\vp) {$\pi_2$};

\draw [->] (0.2*\hp, 1.65*\vp) --  (0.6*\hp, 1.4*\vp);
\node at (0.5*\hp, 1.6*\vp) {$\pi_3$};

%%2nd floor
%%%1st tableau from left
\node[] at (-\hp, 1*\vp) {$\begin{ytableau}
\none & \none & 4 \\
\color{red} 1 & 2 \\
3
\end{ytableau}$};

%%%loop action
\node at (-\hp + 0.4*\hp, 1*\vp)  {} edge [out=35,in=325, loop] ();
\node at (-\hp + 0.9*\hp, 1*\vp) {$\pi_2$};

%%%outgoing action
\draw[->] (-\hp, 0.65*\vp) --  (-\hp, 0.4*\vp);
\node[] at (-0.85*\hp, 0.525*\vp) {$\pi_3$};

%%%2nd tableau from left
\node[] at (\hp, 1*\vp) {$\begin{ytableau}
\none & \none & *(blue!5) 3 \\
*(blue!5) 1 & *(blue!5) 4 \\
*(blue!5) 2
\end{ytableau}$};

%%%loop action
\node at (\hp + 0.4*\hp, 1*\vp)  {} edge [out=35,in=325, loop] ();
\node at (\hp + 1.05*\hp, 1*\vp) {$\pi_1, \pi_3$};

%%%outgoing action
\draw[->] (\hp, 0.65*\vp) --  (\hp, 0.4*\vp);
\node[] at (1.15*\hp, 0.525*\vp) {$\pi_2$};

%%1st floor
%%%1st tableau from left
\node[] at (-\hp, 0*\vp) {$\begin{ytableau}
\none & \none & 3 \\
\color{red} 1 & 2 \\
4
\end{ytableau}$};

%%%loop action
\node at (-\hp + 0.4*\hp, 0*\vp)  {} edge [out=35,in=325, loop] ();
\node at (-\hp + 0.9*\hp, 0*\vp) {$\pi_3$};

%%%outgoing action
\draw[->] (-0.6*\hp, 0.65*\vp - \vp) --  (-0.2*\hp, 0.4*\vp - \vp);
\node[] at (-0.55*\hp, 0.45*\vp - \vp) {$\pi_2$};

%%%2nd tableau from left
\node[] at (\hp, 0*\vp) {$\begin{ytableau}
\none & \none & *(green!5) 2 \\
*(green!5) 1 & *(green!5) 4 \\
*(green!5) 3
\end{ytableau}$};

%%%loop action
\node at (\hp + 0.4*\hp, 0*\vp)  {} edge [out=35,in=325, loop] ();
\node at (\hp + 0.9*\hp, 0*\vp) {$\pi_2$};

%%%outgoing action
\draw[->] (0.6*\hp, 0.65*\vp - \vp) --  (0.2*\hp, 0.4*\vp - \vp);
\node[] at (0.55*\hp, 0.45*\vp - \vp) {$\pi_3$};

\draw[->] (2*\hp + -0.6*\hp, 0.65*\vp - \vp) --  (2*\hp + -0.2*\hp, 0.4*\vp - \vp);
\node[] at (2*\hp + -0.55*\hp, 0.45*\vp - \vp) {$\pi_1$};

%%0th floor
%%%1st tableau from left
\node[] at (0, -1*\vp) {$\begin{ytableau}
\none & \none & 2 \\
1 & 3 \\
4
\end{ytableau}$};

%%%loop action
\node at (0.4*\hp, -1*\vp)  {} edge [out=35,in=325, loop] ();
\node at (1.05*\hp, -1*\vp) {$\pi_2, \pi_3$};

%%%outgoing action
\draw[->] (0.2*\hp, 0.65*\vp - 2*\vp) --  (0.6*\hp, 0.4*\vp - 2*\vp);
\node[] at (0.55*\hp, 0.6*\vp - 2*\vp) {$\pi_1$};

%%%2nd tableau from left
\node[] at (2*\hp, -1*\vp) {$\begin{ytableau}
\none & \none & 1 \\
2 & 4 \\
3
\end{ytableau}$};

%%%loop action
\node at (2*\hp + 0.4*\hp, -1*\vp)  {} edge [out=35,in=325, loop] ();
\node at (2*\hp + 1.05*\hp, -1*\vp) {$\pi_1, \pi_2$};

%%%outgoing action
\draw[->] (2*\hp + -0.2*\hp, 0.65*\vp - 2*\vp) --  (2*\hp + -0.6*\hp, 0.4*\vp - 2*\vp);
\node[] at (2*\hp + -0.55*\hp, 0.6*\vp - 2*\vp) {$\pi_3$};

%%-1st floor
\node[] at (\hp, -2*\vp) {$\begin{ytableau}
\none & \none & 1 \\
\color{red} 2 & 3 \\
4
\end{ytableau}$};

%%%loop action
\node at (\hp + 0.4*\hp, -2*\vp)  {} edge [out=35,in=325, loop] ();
\node at (\hp + 1.05*\hp, -2*\vp) {$\pi_1, \pi_3$};

%%%%%%%%%%%%%%%%%%%%%%%%%%%%%%%%%%%%%%%%%%%%%%%%%%%%%%%%%%%%%%%%%
%SRT( (1,2,1) ) and \Omega(\calV_{(1,2,1)})
%% 4th floor
\node[] at (0 + \hpp, 3*\vp) {$\begin{ytableau}
\none & 2 & 4 \\
1 & 3
\end{ytableau}$};

%%%outgoing action
\draw [->] (-0.2*\hp + \hpp, 2.65*\vp) --  (-0.8*\hp + \hpp, 2.35*\vp);
\node at (-0.6*\hp + \hpp, 2.6*\vp) {$\pi_1$};

\draw [->] (0.2*\hp + \hpp, 2.65*\vp) --  (0.8*\hp + \hpp, 2.35*\vp);
\node at (0.6*\hp + \hpp, 2.6*\vp) {$\pi_3$};

%%%loop action
\node at (0.4*\hp + \hpp, 3*\vp)  {} edge [out=35,in=325, loop] ();
\node at (0.9*\hp + \hpp, 3*\vp) {$\pi_2$};

%% 3rd floor
%%% 1st tableau from left
\node[] at (-\hp + \hpp, 2*\vp) {$\begin{ytableau}
\none & *(red!5) 1 & *(red!5)  4 \\
*(red!5) \color{red} 2 & *(red!5) 3
\end{ytableau}$};

%%%outgoing action
\draw [->] (-0.8*\hp + \hpp, 1.65*\vp) --  (-0.2*\hp + \hpp, 1.35*\vp);
\node at (-0.6*\hp + \hpp, 1.4*\vp) {$\pi_3$};

%%%loop action
\node at (-0.6*\hp + \hpp, 2*\vp)  {} edge [out=35,in=325, loop] ();
\node at (-0.1*\hp + \hpp, 2*\vp) {$\pi_1$};

%%% 2nd tableau from left
\node[] at (\hp + \hpp, 2*\vp) {$\begin{ytableau}
\none & \color{red} 2 & 3 \\
1 & 4
\end{ytableau}$};

%%%outgoing action
\draw [->] (0.8*\hp + \hpp, 1.65*\vp) --  (0.2*\hp + \hpp, 1.35*\vp);
\node at (0.6*\hp + \hpp, 1.4*\vp) {$\pi_1$};

%%%loop action
\node at (1.4*\hp + \hpp, 2*\vp)  {} edge [out=35,in=325, loop] ();
\node at (1.95*\hp + \hpp, 2*\vp) {$\pi_3$};

%% 2nd floor
\node[] at (0 + \hpp, \vp) {$\begin{ytableau}
\none & *(blue!5) 1 & *(blue!5) 3 \\
*(blue!5) 2 & *(blue!5) 4
\end{ytableau}$};

%%%outgoing action
\draw [->] (0*\hp + \hpp, 0.65*\vp) --  (0*\hp + \hpp, 0.35*\vp);
\node at (0.15*\hp + \hpp, 0.5*\vp) {$\pi_2$};

%%%loop action
\node at (0.4*\hp + \hpp, 1*\vp)  {} edge [out=35,in=325, loop] ();
\node at (1.1*\hp + \hpp, 1*\vp) {$\pi_1, \pi_3$};

%% 1st floor
\node[] at (0  + \hpp, 0) {$\begin{ytableau}
\none & *(green!5) \color{red} 1 & *(green!5) 2 \\
*(green!5) \color{red} 3 & *(green!5) 4
\end{ytableau}$};

%%%loop action
\node at (0.4*\hp + \hpp, 0*\vp)  {} edge [out=35,in=325, loop] ();
\node at (0.9*\hp + \hpp, 0*\vp) {$\pi_2$};

%syzygy
\draw [dotted, blue, line width = 0.4mm]
(-0.3*\hp + \hpp, 2.4*\vp) -- (-1.7*\hp + \hpp, 2.4*\vp) -- (-1.7*\hp + \hpp, 1.7*\vp) -- (-0.8*\hp + \hpp, 1*\vp) -- (-0.8*\hp + \hpp, -0.4*\vp) -- (1.3*\hp + \hpp, -0.4*\vp) -- (1.3*\hp + \hpp, 1.1*\vp) -- (-0.3*\hp + \hpp, 2.4*\vp);

\node [] at (0.7*\hp + \hpp, -0.6*\vp) {\large $\Omega(\calV_{(1,2,1)})$};
\end{tikzpicture}
\caption{$\partial_1: \bfP_{(2,1) \oplus (1)} \ra \bfP_{(1,2,1)}$}
\label{fig: partial_1 example}
\end{figure}

(b) Note that $\calJ(\alpha) = [\bal^{(1)}]
= \{(2,2),(2,1,1)\}$.
By Theorem~\ref{main thm for V}(b), we have 
\[
\dim \Ext_{H_n(0)}^1(\calV_{(1,2,1)},\bfF_{\beta}) = 
\begin{cases}
1 & \text{ if } \beta = (2,2) \text{ or } (2,1,1), \\
0 & \text{ otherwise.}
\end{cases}
\]
\end{example}

\section{A minimal injective presentation of $\calV_\alpha$ and $\Ext^1_{H_n(0)}(\bfF_\beta,\calV_\alpha)$}
\label{Sec: extensions of F by V}
As before, $\alpha$ denotes an arbitrarily chosen composition of $n$.
In this section, we construct a minimal injective presentation of $\calV_\alpha$.
Using this, we compute $\Ext^1_{H_n(0)}(\bfF_\beta,\calV_\alpha)$ for each $\beta \models n$.

Let us introduce necessary terminologies and notation.
Let $M,N$ be finitely generated $H_n(0)$-modules with $N \subsetneq M$.
We say that $M$ is an \emph{essential extension} of $N$ if $X\cap N \ne 0$ for all nonzero submodules $X$ of $M$.
An injective $H_n(0)$-module homomorphism $\iota: M \ra \bdI$ with $\bdI$ injective is called an \emph{injective hull} of $M$ if $\bdI$ is an essential extension of $\iota(M)$, which always exists and is unique up to isomorphism.
By~\cite[Theorem 3.30 and Exercise 3.6.12]{99Lam} it follows that $\bdI$ is an injective hull of $M$ if and only if $\iota(M) \supseteq \soc(\bdI)$.
Here $\soc(\bdI)$ is the \emph{socle} of $\bdI$, that is, the sum of all simple submodules of $\bdI$.
When $\iota$ is clear in the context,
we write $\Omega^{-1}(M)$ for $\coker (\iota)$ and call it the \emph{cosyzygy module} of $M$.
An exact sequence
\begin{equation*}
\begin{tikzcd}
0 \arrow[r] & M  \arrow[r,"\iota"] & \bdI_0 \arrow[r,"\partial^1"] &   \bdI_1
\end{tikzcd}
\end{equation*}
with injective modules $\bdI_0$ and $\bdI_1$
is called a \emph{minimal injective presentation} if the $H_n(0)$-module homomorphisms $\iota: M \ra \bdI_0$
and $\partial^1: \Omega^{-1}(M) \ra \bdI_1$
are injective hulls of $M$ and $\Omega^{-1}(M)$, respectively.

We first describe an injective hull of $\calV_\alpha$.
Let
\begin{displaymath}
\calK(\alpha) := \{1 \leq  i \leq \ell(\alpha) \mid \alpha_i > 1\} \cup \{0\}.
\end{displaymath}
We write the elements of $\calK(\alpha)$ as $k_0:=0 < k_1 < k_2 < \cdots < k_m$.
Let
\begin{align*}
\balalp &:=
 (\alpha_{k_1} -1) \oplus (\alpha_{k_2}-1) \oplus \cdots \oplus \left((\alpha_{k_m}-1) \odot (1^{\ell(\alpha)})\right) \\
 & \ =
 (\alpha_{k_1} -1) \oplus (\alpha_{k_2}-1) \oplus \cdots \oplus (\alpha_{k_{m-1}}-1) \oplus (\alpha_{k_m},1^{\ell(\alpha)-1}).
\end{align*}
Let us depict $\trd(\balalp)$ in a pictorial manner.
When $j=0$, we define $\tS_{k_0}$ to be the vertical strip consisting of all the boxes in the first column of $\tcd(\alpha)$.
For $1 \le j \le m$, we define $\tS_{k_j}$ as the horizontal strip consisting of the boxes in the $k_j$th row of $\tcd(\alpha)$ (from the top), except for the leftmost box.
Then $\balalp$ is defined by the generalized composition obtained by placing  $\tS_{k_0},\tS_{k_1},\ldots, \tS_{k_m}$ in the following manner:
\begin{enumerate}[label = {\rm (\roman*)}]
\item $\tS_{k_0}$ is placed horizontally at the topmost row in the new diagram.
\item $\tS_{k_m}$ is placed vertically to the lower-left of $\tS_{k_0}$ so that $\tS_{k_0}$ and $\tS_{k_m}$ are connected.
\item
For $j=m-1,m-2, \ldots, 1$, place $\tS_{k_j}$ vertically to the lower-left of $\tS_{k_{j+1}}$
so that they are not connected to each other.
\end{enumerate}
{\sc Figure}~\ref{fig: rd(balalp)} illustrates the above procedure.
\begin{figure}[ht]
\begin{tikzpicture}
\def\hhh{4.5mm}
\def\vvv{5.5mm}
\def\www{0.20mm}
\def\hhhh{35mm}
%%%%%%%%%%%%%%%%%%%%%%%
\node[above] at (\hhh*4.5,\vvv*-6.3) {\small $\tcd(\alpha)$};
\node[above] at (\hhh*1.8+\hhhh*4,\vvv*-7) {\small $\trd(\balalp)$};
\draw[line width=\www, fill=red!20] (\hhh*3,-\vvv*0) rectangle (\hhh*4,-\vvv*5);
\draw[line width=\www, fill=green!10] (\hhh*4,-\vvv*0) rectangle (\hhh*5,-\vvv*1);

\draw[line width=\www, fill=blue!20] (\hhh*4,-\vvv*3) rectangle (\hhh*6,-\vvv*2);
\draw[line width=\www, fill=yellow!10] (\hhh*4,-\vvv*4) rectangle (\hhh*6,-\vvv*3);
\node[] at (\hhh*3.5,-\vvv*2.5) {\tiny $\tS_{k_0}$};
\node[] at (\hhh*4.5,-\vvv*0.5) {\tiny $\tS_{k_1}$};
\node[] at (\hhh*5,-\vvv*2.5) {\tiny $\tS_{k_2}$};
\node[] at (\hhh*5,-\vvv*3.5) {\tiny $\tS_{k_3}$};
%%%%%%%%%%%%%%%%%%%%%%%%
\draw[->,decorate,decoration={snake,amplitude=.4mm,segment length=2mm,post length=1mm}]
(\hhh*6.5,-\vvv*2.5) -- (-\hhh*0+\hhhh,-\vvv*2.5);
\draw[line width=\www, fill=red!20] (\hhh*0+\hhhh,\vvv*0) rectangle (\hhh*5+\hhhh,-\vvv*1);
\node[] at (\hhh*2.5+\hhhh,-\vvv*0.5) {\tiny $\tS_{k_0}$};
%%%%%%%%%%%%%%%%%%%%%%%%
\draw[->,decorate,decoration={snake,amplitude=.4mm,segment length=2mm,post length=1mm}]
(\hhh*5+\hhhh,-\vvv*2.5) -- (-\hhh*1+\hhhh*2,-\vvv*2.5);
\draw[line width=\www, fill=red!20] (\hhh*0+\hhhh*2,\vvv*0) rectangle (\hhh*5+\hhhh*2,-\vvv*1);
\draw[line width=\www, fill=yellow!20] (-\hhh*0+\hhhh*2,-\vvv*3) rectangle (\hhh*1+\hhhh*2,-\vvv*1);
\node[] at (\hhh*2.5+\hhhh*2,-\vvv*0.5) {\tiny $\tS_{k_0}$};
\node[] at (\hhh*0.5+\hhhh*2,-\vvv*2) {\tiny $\tS_{k_3}$};
%%%%%%%%%%%%%%%%%%%%%%%%%
\draw[->,decorate,decoration={snake,amplitude=.4mm,segment length=2mm,post length=1mm}]
(\hhh*5+\hhhh*2,-\vvv*2.5) -- (-\hhh*1.5+\hhhh*3,-\vvv*2.5);
\draw[line width=\www, fill=red!20] (\hhh*0+\hhhh*3,\vvv*0) rectangle (\hhh*5+\hhhh*3,-\vvv*1);
\draw[line width=\www, fill=yellow!20] (-\hhh*0+\hhhh*3,-\vvv*3) rectangle (\hhh*1+\hhhh*3,-\vvv*1);
\draw[line width=\www, fill=blue!20] (-\hhh*1+\hhhh*3,-\vvv*3) rectangle (\hhh*0+\hhhh*3,-\vvv*5);
\node[] at (\hhh*2.5+\hhhh*3,-\vvv*0.5) {\tiny $\tS_{k_0}$};
\node[] at (\hhh*0.5+\hhhh*3,-\vvv*2) {\tiny $\tS_{k_3}$};
\node[] at (-\hhh*0.5+\hhhh*3,-\vvv*4) {\tiny $\tS_{k_2}$};
%%%%%%%%%%%%%%%%%%%%%
\draw[->,decorate,decoration={snake,amplitude=.4mm,segment length=2mm,post length=1mm}]
(\hhh*5+\hhhh*3,-\vvv*2.5) -- (-\hhh*2+\hhhh*4,-\vvv*2.5);
\draw[line width=\www, fill=red!20] (\hhh*0+\hhhh*4,\vvv*0) rectangle (\hhh*5+\hhhh*4,-\vvv*1);
\draw[line width=\www, fill=yellow!20] (-\hhh*0+\hhhh*4,-\vvv*3) rectangle (\hhh*1+\hhhh*4,-\vvv*1);
\draw[line width=\www, fill=blue!20] (-\hhh*1+\hhhh*4,-\vvv*3) rectangle (\hhh*0+\hhhh*4,-\vvv*5);
\draw[line width=\www, fill=green!10] (-\hhh*2+\hhhh*4,-\vvv*5) rectangle (-\hhh*1+\hhhh*4,-\vvv*6);
%%%%
\node[] at (\hhh*2.5+\hhhh*4,-\vvv*0.5) {\tiny $\tS_{k_0}$};
\node[] at (\hhh*0.5+\hhhh*4,-\vvv*2) {\tiny $\tS_{k_3}$};
\node[] at (-\hhh*0.5+\hhhh*4,-\vvv*4) {\tiny $\tS_{k_2}$};
\node[] at (-\hhh*1.5+\hhhh*4,-\vvv*5.5) {\tiny $\tS_{k_1}$};
\end{tikzpicture}
\caption{The construction of $\trd(\balalp)$ when $\alpha=(2,1,3^2,1)$}
\label{fig: rd(balalp)}
\end{figure}
\vskip 2mm

For simplicity, we introduce the following notation:
\begin{enumerate}[label = {$\bullet$}]
\item
For an SIT $\calT$ and a subdiagram $\tS$ of shape of $\calT$,
we denote by $\calT(\tS)$ the set of entries of $\calT$ in $\tS$.

\item
For an SRT $T$ and a subdiagram $\tS$ of shape of $T$,
we denote by $T(\tS)$ the set of entries of $T$ in $\tS$.
\end{enumerate}
For $\calT \in \SIT(\alpha)$, let $\TcalTt$ be the tableau of $\trd(\balalp)$ defined by
\begin{equation*}
(\TcalTt)(\tS_{k_j}) := \calT(\tS_{k_j})
\qquad \text{for } 0 \leq j \leq m.
\end{equation*}
Extending the assignment $\calT \mapsto T^{\calT}$ by linearity,
we define the $\C$-linear map
\begin{equation*}
\injhull: \calV_\alpha \ra \bfP_{\balalp}, \quad \calT \mapsto \TcalTt,
\end{equation*}
which is obviously injective.

\begin{theorem}\label{Thm:injective hull V}
{\rm (This will be proven in Subsection~\ref{Subsec: Proof Par0 is injective hull}.)}
$\injhull: \calV_\alpha \to \bfP_{\balalp}$ is an injective hull of $\calV_\alpha$.
\end{theorem}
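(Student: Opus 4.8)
The plan is to establish three facts about the map $\injhull\colon\calV_\alpha\to\bfP_{\balalp}$: that it is a homomorphism of $H_n(0)$-modules; that $\bfP_{\balalp}$ is injective; and that $\soc(\bfP_{\balalp})\subseteq\injhull(\calV_\alpha)$. Together with the injectivity of $\injhull$ (already observed), the criterion recalled just before the statement --- that an injective $H_n(0)$-module $\bdI$ containing $\injhull(\calV_\alpha)$ is an injective hull of $\calV_\alpha$ precisely when $\soc(\bdI)\subseteq\injhull(\calV_\alpha)$, see \cite[Theorem 3.30 and Exercise 3.6.12]{99Lam} --- then yields the assertion. The second fact is immediate: $\bfP_{\balalp}$ is projective and $H_n(0)$ is Frobenius, hence self-injective.

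For the first fact, the heart of the matter is the combinatorial identity $\Des(\calT)=\Des(\TcalTt)$ for every $\calT\in\SIT(\alpha)$. I would prove it by fixing $i\in[n-1]$ and comparing the locations of $i$ and $i+1$ in $\tcd(\alpha)$ and in $\trd(\balalp)$, exploiting the structure of $\trd(\balalp)$: the strip $\tS_{k_0}$ (the first column of $\tcd(\alpha)$) is its unique top row, while the vertical strips $\tS_{k_1},\dots,\tS_{k_m}$ (the non-leftmost boxes of the rows $k_1<\dots<k_m$ of $\tcd(\alpha)$) sit below it one atop another in the reversed order $\tS_{k_m},\tS_{k_{m-1}},\dots,\tS_{k_1}$. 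A short case analysis --- according to whether each of $i,i+1$ lies in the first column of $\tcd(\alpha)$, and, in the remaining cases, comparing the rows of $\tcd(\alpha)$ containing them --- shows that $i\in\Des(\calT)$ exactly when $i$ lies weakly below $i+1$ in $\trd(\balalp)$. Granting this, the homomorphism property is mechanical: if $i\notin\Des(\calT)$ then $\pi_i$ fixes both $\calT$ and $\TcalTt$; if $i\in\Des(\calT)$ with $i,i+1$ both in the first column of $\tcd(\alpha)$, then $i,i+1$ lie in the common row $\tS_{k_0}$ of $\trd(\balalp)$, so $\pi_i$ annihilates both; and otherwise $i$ and $i+1$ lie in two distinct rows of $\trd(\balalp)$, $\pi_i$ swaps them on each side, and --- since $i,i+1$ are consecutive integers, so within each strip each drops into the box vacated by the other --- the swapped ribbon tableau is exactly $T^{s_i\cdot\calT}=\injhull(\pi_i\cdot\calT)$.

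For the third fact I would argue by a dimension count. By Theorem~\ref{thm: bfP isom to calP}, $\bfP_{\balalp}\cong\bigoplus_{\beta\in[\balalp]}\bfP_\beta$; each $\bfP_\beta$ is an indecomposable injective $H_n(0)$-module, hence has simple socle, so $\dim_\C\soc(\bfP_{\balalp})=|[\balalp]|$. On the other hand, since $\injhull$ is an injective module homomorphism it carries each simple submodule of $\calV_\alpha$ onto a simple submodule of $\bfP_{\balalp}$, hence restricts to an embedding $\soc(\calV_\alpha)\hookrightarrow\soc(\bfP_{\balalp})$; in particular $\dim_\C\soc(\calV_\alpha)\le|[\balalp]|$. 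If one now produces $|[\balalp]|$ linearly independent simple submodules of $\calV_\alpha$, then $\dim_\C\soc(\calV_\alpha)=|[\balalp]|=\dim_\C\soc(\bfP_{\balalp})$, which forces $\injhull(\soc(\calV_\alpha))=\soc(\bfP_{\balalp})$ and hence $\soc(\bfP_{\balalp})\subseteq\injhull(\calV_\alpha)$, as required. Concretely I expect these submodules to be spanned, one for each $\beta\in[\balalp]$, by explicit signed sums of standard immaculate tableaux built from the ``sink'' SIT (first column filled by $1,2,\dots,\ell(\alpha)$, and the remaining entries $\ell(\alpha)+1,\dots,n$ filling the non-leftmost boxes of rows $k_m,k_{m-1},\dots,k_1$ in that order) according to the pattern of concatenations versus near-concatenations defining $\beta$, each such vector being annihilated or fixed by every $\pi_j$ and thus spanning a copy of $\bfF_{\beta^\rmr}$.

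The main obstacle is this last step: pinning down $\soc(\calV_\alpha)$ and exhibiting the right $|[\balalp]|$ socle vectors --- equivalently, verifying that the whole socle of the ribbon module $\bfP_{\balalp}$ is supported on the sub-family $\{\TcalTt\colon\calT\in\SIT(\alpha)\}$ of standard ribbon tableaux rather than on all of $\SRT(\balalp)$. The descent identity, the action bookkeeping, and the reduction to a dimension count are comparatively routine. (When $m=0$, i.e.\ $\alpha=(1^n)$, the construction degenerates, $\bfP_{\balalp}=\bfP_{(1^n)}$ is one-dimensional, and $\injhull$ is an isomorphism, so the statement is trivial.)
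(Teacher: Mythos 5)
Your proof of the homomorphism property (via the descent identity $\Des(\calT) = \Des(\TcalTt)$ and the three-way case analysis on $\pi_i \cdot \calT$) matches the paper's argument, and your reduction to showing $\soc(\bfP_{\balalp}) \subseteq \injhull(\calV_\alpha)$ via \cite[Theorem 3.30 and Exercise 3.6.12]{99Lam} together with self-injectivity of $H_n(0)$ is also what the paper does. The real divergence, and the genuine gap, is in the socle inclusion.

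Your dimension-counting framework is logically sound, but it hinges on producing $|[\balalp]|$ linearly independent socle vectors in $\calV_\alpha$, which you yourself flag as the obstacle and only describe by anticipation, not by construction. The paper avoids guessing inside $\calV_\alpha$ and works in $\bfP_{\balalp}$ instead: using the decomposition $\bfP_{\balalp} \cong \bigoplus_{\beta \in [\balalp]} \bfP_\beta$ of Theorem~\ref{thm: bfP isom to calP}, Lemma~\ref{Lem: nessary lemmas} identifies the socle generator of each summand explicitly as $\opi_{\longelto{\tJ{\beta}{\balalp}}} \cdot \TLba{\beta}{\balalp}$, a signed sum over tableaux $\pi_\sigma \cdot \TLba{\beta}{\balalp}$ with $\sigma \in (\SG_n)_{\tJ{\beta}{\balalp}}$; every such tableau has $(\pi_\sigma \cdot \TLba{\beta}{\balalp})(\tS_{k_0}) = \{1,\ldots,\ell(\alpha)\}$, from which the inequality $T^{1+\delta_{j,m}}_j > T^1_{m+k_j-1}$ characterizing membership in $\injhull(\calV_\alpha)$ follows at once. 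The ``explicit signed sums of standard immaculate tableaux'' you anticipate are precisely the $\injhull$-preimages of these vectors, so producing them cold inside $\calV_\alpha$ would amount to the same computation; the dimension count does not spare you the content of Lemma~\ref{Lem: nessary lemmas} (or an equivalent construction), and the $\bfP_{\balalp}$-side approach is cleaner because the direct-sum decomposition supplies the blueprint. As written, your proposal is incomplete at the one step that carries the real difficulty.
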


For later use, we provide bases of $\injhull(\calV_\alpha)$ and $\Omega^{-1}(\calV_\alpha)$.
From the injectivity of $\injhull$ we derive that
$\injhull(\calV_\alpha)$ is spanned by
$$
\{ T \in \SRT(\balalp) \mid T_j^{1+\delta_{j,m}} > T^1_{m+k_j-1} \text{ for all } 1\leq j \leq m\}
$$
and $\Omega^{-1}(\calV_\alpha)$ is spanned by
$\{T + \injhull(\calV_\alpha) \mid T \in \Theta(\calV_\alpha)\}$ with
\begin{equation}\label{Eq: generators of cosyzygy}
\Theta(\calV_\alpha) := \{ T \in \SRT(\balalp) \mid T_j^{1+\delta_{j,m}} < T^1_{m+k_j-1} \text{ for some }1\leq j \leq m \}.
\end{equation}

\begin{example}
If $\alpha = (1,2,2) \models 5$, then $\calK(\alpha) = \{0,2,3\}$ and $\balalp = (1)\oplus(2,1^2)$.
For
$\tau=
\begin{ytableau}\ytableausetup{boxsize=1em}
1 \\
2 & 4\\
3 & 5
\end{ytableau}
\in \SIT(\alpha)$,
one sees that
$T^{\calT} =
\begin{ytableau}
\none & 1 & 2 & 3\\
\none & 5 \\
4
\end{ytableau}
\in \SRT(\balalp)$.
The map $\injhull:\calV_\alpha \ra  \bfP_{\balalp}$ is illustrated in {\sc Figure}~\ref{Fig: V to P}, where the red entries $i$ in tableaux are being used to indicate that $\pi_i$ acts on them as zero.
\begin{figure}[ht]
\begin{tikzpicture}
\ytableausetup{boxsize=1.3em}
\def \hp{40mm}
\def \hhh{30mm}
\def \vvv{20mm}
\node at (\hhh*0,\vvv*1) {\footnotesize
$\begin{ytableau}
*(red!5) \color{red} 1 \\
*(red!5) 2 & *(red!5) 3 \\
*(red!5) 4 & *(red!5) 5
\end{ytableau}$};

\node at (\hhh*0,\vvv*0) {\footnotesize
$\begin{ytableau}
*(blue!5) \color{red} 1 \\
*(blue!5) \color{red} 2 & *(blue!5) 4 \\
*(blue!5) 3 & *(blue!5) 5
\end{ytableau}$};

\node at (\hhh*0,\vvv*-1) {\footnotesize
$\begin{ytableau}
*(green!5) \color{red} 1 \\
*(green!5) \color{red} 2 & *(green!5) 5 \\
*(green!5) 3 & *(green!5) 4
\end{ytableau}$};
\node at (\hhh*0,\vvv*-1.5) {$\calV_{(1,2,2)}$};
\node[right] at (\hhh*2.5,\vvv*-1.5) { \ $\bfP_{(1)\oplus(2,1,1)}$};

\draw [dotted, red, line width = 0.3mm]
(\hhh*1.1,\vvv*2.5) --
(\hhh*1.1,\vvv*1.5) --
(\hhh*2.1,\vvv*1.5) --
(\hhh*2.1,\vvv*-0.5) --
(\hhh*3.1,\vvv*-0.5) --
(\hhh*3.1,\vvv*0.5) --
(\hhh*4.1,\vvv*0.5) --
(\hhh*4.1,\vvv*2.5);

\draw [|->] (\hhh*0.55, \vvv*0) -- (\hhh*0.95, \vvv*0);

\node[] at (\hhh*3,\vvv*-0.65) {\small $\Omega^{-1}(\calV_\alpha)$};

\draw [dotted, blue, line width = 0.3mm]
(\hhh*1.1,\vvv*1.3) --
(\hhh*2,\vvv*1.3) --
(\hhh*2,\vvv*-1.3) --
(\hhh*1.1,\vvv*-1.3) --
(\hhh*1.1,\vvv*1.3);

\node[] at (\hhh*1.7,\vvv*-1.45) {\small $\injhull(\calV_\alpha)$};

%%%%%%%%%%%%%%%%%%%%%%%%%%%%%%%%%%%%%%%%%%%%%%%%%%%%%%%%%%

\node at (\hhh*0.15, \vvv*1.1)  {} edge [out=35,in=325, loop] ();
\node at (\hhh*0.15, \vvv*0.1)  {} edge [out=35,in=325, loop] ();
\node at (\hhh*0.2, \vvv*-1.1)  {} edge [out=35,in=325, loop] ();
\node[right] at (\hhh*0.315, \vvv*1.1)  {\footnotesize $\pi_2,\pi_4$};
\node[right] at (\hhh*0.315, \vvv*0.1)  {\footnotesize $\pi_3$};
\node[right] at (\hhh*0.34, \vvv*-1.1)  {\footnotesize $\pi_3,\pi_4$};

\draw[->] (\hhh*0,\vvv*0.7) -- (\hhh*0,\vvv*0.3) node[left,midway] {\footnotesize $\pi_3$};
\draw[->] (\hhh*0,\vvv*-0.3) -- (\hhh*0,\vvv*-0.7) node[left,midway] {\footnotesize $\pi_4$};

%%%%%%%%%%%%%%%%%%%%%%%%%%%%%%%%%%%%%%%%%

\node at (\hp + \hhh*0,\vvv*2) {\footnotesize
$\begin{ytableau}
\none & 1 & 3 & 5 \\
\none & 2 \\
4
\end{ytableau}$};
\node at (\hp + \hhh*1,\vvv*2) {\footnotesize
$\begin{ytableau}
\none & \color{red} 1 & 2 & 5 \\
\none & 4 \\
3
\end{ytableau}$};
\node at (\hp + \hhh*2,\vvv*2) {\footnotesize
$\begin{ytableau}
\none & 1 & \color{red} 3 & 4 \\
\none & 5 \\
2
\end{ytableau}$};

\node at (\hp + \hhh*0,\vvv*1) {\footnotesize
$\begin{ytableau}
\none & *(red!5) \color{red} 1 & *(red!5) 2 & *(red!5) 4 \\
\none & *(red!5) 5 \\
*(red!5) 3
\end{ytableau}$};
\node at (\hp + \hhh*1,\vvv*1) {\footnotesize
$\begin{ytableau}
\none & 1 & \color{red} 3 & 4\\
\none & 2 \\
5
\end{ytableau}$};
\node at (\hp + \hhh*2,\vvv*1) {\footnotesize
$\begin{ytableau}
\none & \color{red} 1 & 2 & 5\\
\none & 3 \\
4
\end{ytableau}$};

\node at (\hp + \hhh*0,\vvv*0) {\footnotesize
$\begin{ytableau}
\none & *(blue!5) \color{red} 1 & *(blue!5) \color{red} 2 & *(blue!5) 3 \\
\none & *(blue!5) 5 \\
*(blue!5) 4
\end{ytableau}$};
\node at (\hp + \hhh*1,\vvv*0) {\footnotesize
$\begin{ytableau}
\none & \color{red} 1 & 2 & 4 \\
\none & 3\\
5
\end{ytableau}$};

\node at (\hp + \hhh*0,\vvv*-1) {\footnotesize
$\begin{ytableau}
\none & *(green!5) \color{red} 1 & *(green!5) \color{red} 2 & *(green!5) 3 \\
\none & *(green!5) 4 \\
*(green!5) 5
\end{ytableau}$};

\node at (\hp + \hhh*0.24, \vvv*2.1)  {} edge [out=35,in=325, loop] ();
\node[right] at (\hp + \hhh*0.4, \vvv*2.1)  {\footnotesize $\pi_1,\pi_3$};
\node at (\hp + \hhh*0.24, \vvv*1.1)  {} edge [out=35,in=325, loop] ();
\node[right] at (\hp + \hhh*0.4, \vvv*1.1)  {\footnotesize $\pi_2,\pi_4$};

\node at (\hp + \hhh*0.24, \vvv*0.1)  {} edge [out=35,in=325, loop] ();
\node[right] at (\hp + \hhh*0.4, \vvv*0.1)  {\footnotesize $\pi_3$};

\node at (\hp + \hhh*0.24, \vvv*-1)  {} edge [out=35,in=325, loop] ();
\node[right] at (\hp + \hhh*0.4, \vvv*-1) {\footnotesize $\pi_3,\pi_4$};

\node at (\hp + \hhh*1.24, \vvv*2.1)  {} edge [out=35,in=325, loop] ();
\node[right] at (\hp + \hhh*1.4, \vvv*2.1)  {\footnotesize $\pi_2$};
\node at (\hp + \hhh*1.24, \vvv*1.1)  {} edge [out=35,in=325, loop] ();
\node[right] at (\hp + \hhh*1.4, \vvv*1.1)  {\footnotesize $\pi_1,\pi_4$};

\node at (\hp + \hhh*1.24, \vvv*0.1)  {} edge [out=35,in=325, loop] ();
\node[right] at (\hp + \hhh*1.4, \vvv*0.1)  {\footnotesize $\pi_2,\pi_4$};

\node at (\hp + \hhh*2.24, \vvv*2.1)  {} edge [out=35,in=325, loop] ();
\node[right] at (\hp + \hhh*2.4, \vvv*2.1)  {\footnotesize $\pi_1,\pi_4$};
\node at (\hp + \hhh*2.24, \vvv*1.1)  {} edge [out=35,in=325, loop] ();
\node[right] at (\hp + \hhh*2.4, \vvv*1.1)  {\footnotesize $\pi_2,\pi_3$};

\node at (\hp + \hhh, \vvv*2.5) {$\vdots$};

\draw[->] (\hp*1.1,\vvv*1.7) -- (\hp*1.2+\hhh*0.6,\vvv*1.3) node[above,midway] {\footnotesize $\pi_4$};

\draw[->] (\hp*1.3,\vvv*1.7) -- (\hp*1.2+\hhh*1.6,\vvv*1.3) node[above,midway] {\footnotesize $\pi_2$};

\draw[->] (\hp*1.2+\hhh,\vvv*1.8) -- (\hp*1.2+\hhh*1.7,\vvv*1.4) node[above,midway] {\footnotesize $\pi_3$};

\draw[->] (\hp*1.5,\vvv*1.7) -- (\hp*1.1,\vvv*1.3) node[below,midway] {\footnotesize $\pi_4$};

\draw[->] (\hp*1.2+\hhh*1.7,\vvv*1.7) -- (\hp*1.2+\hhh*0.0,\vvv*1.3) node[below,midway] {\footnotesize $\pi_2$};

\draw[->] (\hp*1,\vvv*0.7) -- (\hp*1,\vvv*0.4) node[left,midway] {\footnotesize $\pi_3$};

\draw[->] (\hp*1+\hhh,\vvv*0.7) -- (\hp*1+\hhh,\vvv*0.4) node[left,midway] {\footnotesize $\pi_2$};

\draw[->] (\hp*1+\hhh*1.8,\vvv*0.7) -- (\hp*1.3+\hhh,\vvv*0.3) node[below,midway] {\footnotesize $\pi_4$};

\draw[->] (\hp*1,\vvv*-0.3) -- (\hp*1,\vvv*-0.7) node[left,midway] {\footnotesize $\pi_4$};

\draw[->] (\hp*1+\hhh*0.8,\vvv*-0.3) -- (\hp*1.2,\vvv*-0.7) node[below,midway] {\footnotesize $\pi_3$};
%%%%%%%%%%%%%%%%%%%%%%%%%%%%%%%%%%%%
\end{tikzpicture}

\caption{$\injhull: \calV_{(1,2,2)} \ra \bfP_{(1)\oplus (2,1,1)}$}
\protect \label{Fig: V to P}
\end{figure}
\end{example}

We next describe an injective hull of $\Omega^{-1}(\calV_\alpha)$.
To do this, we need an $H_n(0)$-module homomorphism $\partial^1: \bfP_{\balalp}\ra \bdI$
with $\bdI$ an injective module satisfying that $\ker(\partial^1)=\injhull(\calV_\alpha)$.

First, we provide the required injective module $\bdI$.
For $1 \leq j \leq m$, define $\balalpp{j}$ to be the generalized composition
$$
\balalpp{j}: = \left\{
\begin{array}{ll}
(\alpha_{k_1}-1) \oplus \cdots \oplus  (\alpha_{k_j} - 2) \oplus \cdots \oplus
(\alpha_{k_m}, 1^{\ell(\alpha)-k_j+1}) \oplus (1^{k_j-1}) & \text{if }  1 \leq j < m, \\
(\alpha_{k_1}-1 ) \oplus \cdots \oplus  (\alpha_{k_{m-1}}-1) \oplus \left((\alpha_{k_m}-1,1^{\ell(\alpha)-k_j+1}) \cdot (1^{k_j-1})\right) & \text{if }  j  = m.
\end{array}
\right.
$$
Then we set
\begin{equation}\label{Eq: I1}
\bdI:=\bigoplus_{1 \leq j \leq m}\bfP_{\balalpp{j}}.
\end{equation}

In the following, we provide a pictorial description of $\trd(\balalpp{j})$.
We begin by recalling that $\trd(\balalp)$ consists of the horizontal strip $\tS_{k_0}$ and the vertical strips $\tS_{k_1},\ldots,\tS_{k_m}$.
For each $-1\le r \le m$, we denote by $\tS'_{k_r}$ the connected horizontal strip of length
\begin{equation*}
|\tS'_{k_r}|:=
\begin{cases}
k_j-1 & \text{ if } r = -1,\\
\ell(\alpha)-k_j+2 & \text{ if } r = 0, \\
|\tS_{k_r}| - \delta_{r,j} & \text{ if } 1 \leq r \leq m,
\end{cases}
\end{equation*}
where $k_{-1} := -1$.
With this preparation, $\balalpp{j}$ is defined to be the generalized composition obtained by placing $\tS'_{k_{-1}},\tS'_{k_0},\tS'_{k_1},\ldots,\tS'_{k_m}$ in the following way:
\begin{enumerate}[label = {\rm (\roman*)}]
\item $\tS'_{k_1}$ is placed vertically to the leftmost column in the diagram we are going to create.

\item For $j = 2,3,\ldots, m$, $\tS'_{k_j}$ is placed vertically to the upper-right of $\tS'_{k_{j-1}}$ so that they are not connected to each other.

\item $\tS'_{k_0}$ is placed horizontally to $\tS'_{k_m}$ so that they are connected.

\item In case where $j \neq m$, $\tS'_{k_{-1}}$ is placed horizontally to the upper-right of $\tS'_{k_0}$ so that they are disconnected.
In case where $j = m$, $\tS'_{k_{-1}}$ is placed horizontally to the upper-right of $\tS'_{k_0}$ so that they are connected.
\end{enumerate}
{\sc Figure}~\ref{Fig: rd balalppj} illustrates the above procedure.
\begin{figure}[ht]
\begin{tikzpicture}
\def\hhh{5mm}
\def\vvv{5.5mm}
\def\www{0.20mm}
\def\hhhh{35mm}
%%%%%%%%%%%%%%%%%%%%%%%
\draw[line width=\www, fill=red!20] (\hhh*0,\vvv*0) rectangle (\hhh*4,-\vvv*1);
\draw[line width=\www, fill=yellow!20] (-\hhh*0,-\vvv*2) rectangle (\hhh*1,-\vvv*1);
\draw[line width=\www, fill=green!10] (\hhh*-1,-\vvv*4) rectangle (\hhh*0,-\vvv*2);
\draw[-,dotted] (\hhh*1,-\vvv*1) to (\hhh*1,\vvv*0);
\draw[-,dotted] (\hhh*2,-\vvv*1) to (\hhh*2,\vvv*0);
\draw[-,dotted] (\hhh*3,-\vvv*1) to (\hhh*3,\vvv*0);
\draw[-,dotted] (\hhh*0,-\vvv*2) to (\hhh*1,-\vvv*2);
\node[] at (\hhh*2,-\vvv*0.5) {\tiny $\tS_{k_0}$};
\node[] at (\hhh*0.5,-\vvv*1.5) {\tiny $\tS_{k_2}$};
\node[] at (\hhh*-0.5,-\vvv*3) {\tiny $\tS_{k_1}$};
\node[above] at (\hhh*2,-\vvv*5.1) {\small $\trd(\balalp)$};
%%%%%%%%%%%%%%%%%%%%%%%% 2nd
\draw[->,decorate,decoration={snake,amplitude=.4mm,segment length=2mm,post length=1mm}]
(\hhh*4,-\vvv*1.5) -- (-\hhh*1.5+\hhhh,-\vvv*1.5);
\draw[line width=\www, fill=green!10] (\hhh*-1+\hhhh,-\vvv*4) rectangle (\hhh*0+\hhhh,-\vvv*3);
\node at (-\hhh*0.4+\hhhh,-\vvv*3.6) {\tiny $\tS'_{k_1}$};
%%%%%%%%%%%%%%%%%%%%%%%% 3rd
\def\hhhh{26mm}
\draw[->,decorate,decoration={snake,amplitude=.4mm,segment length=2mm,post length=1mm}]
(\hhh*2.5+\hhhh,-\vvv*1.5) -- (-\hhh*1.5+\hhhh*2,-\vvv*1.5);
\draw[line width=\www, fill=green!10] (\hhh*-1+\hhhh*2,-\vvv*4) rectangle (\hhh*0+\hhhh*2,-\vvv*3);
\draw[line width=\www, fill=yellow!20] (-\hhh*0+\hhhh*2,-\vvv*3) rectangle (\hhh*1+\hhhh*2,-\vvv*2);
\node at (-\hhh*0.4+\hhhh*2,-\vvv*3.6) {\tiny $\tS'_{k_1}$};
\node at (\hhh*0.55+\hhhh*2,-\vvv*2.6) {\tiny $\tS'_{k_2}$};
%%%%%%%%%%%%%%%%%%%%%%%%  4th
\def\hhhh{25mm}
\draw[->,decorate,decoration={snake,amplitude=.4mm,segment length=2mm,post length=1mm}]
(\hhh*2+\hhhh*2,-\vvv*1.5) -- (-\hhh*1+\hhhh*3,-\vvv*1.5);
\draw[line width=\www, fill=green!10] (\hhh*-1+\hhhh*3,-\vvv*4) rectangle (\hhh*0+\hhhh*3,-\vvv*3);
\draw[line width=\www, fill=yellow!20] (-\hhh*0+\hhhh*3,-\vvv*3) rectangle (\hhh*1+\hhhh*3,-\vvv*2);
\draw[line width=0, fill=red!10] (\hhh*0+\hhhh*3,-\vvv*2) rectangle (\hhh*4+\hhhh*3,-\vvv*1);
\draw[line width=\www] (\hhh*0+\hhhh*3,-\vvv*2) rectangle (\hhh*4+\hhhh*3,-\vvv*1);
\draw[-,dotted] (\hhh*2+\hhhh*3,-\vvv*2) -- (\hhh*2+\hhhh*3,\vvv*-1);
\draw[-,dotted] (\hhh*3+\hhhh*3,-\vvv*2) to (\hhh*3+\hhhh*3,\vvv*-1);
\draw[-,dotted] (\hhh*1+\hhhh*3,-\vvv*2) -- (\hhh*1+\hhhh*3,\vvv*-1);
\node at (-\hhh*0.4+\hhhh*3,-\vvv*3.6) {\tiny $\tS'_{k_1}$};
\node at (\hhh*0.55+\hhhh*3,-\vvv*2.6) {\tiny $\tS'_{k_2}$};
\draw [decorate,
    decoration = {calligraphic brace}] (\hhh*0+\hhhh*3,-\vvv*0.8) --  (\hhh*4+\hhhh*3,-\vvv*0.8);
\node at (\hhh*2+\hhhh*3,-\vvv*0.3) {\tiny $\ell(\alpha)-k_1+2$};
\node at (\hhh*2+\hhhh*3,-\vvv*1.5) {\tiny $\tS'_{k_0}$};
%%%%%%%%%%%%%%%%%%%%%%%%  5th
\def\hhhh{28mm}
\draw[->,decorate,decoration={snake,amplitude=.4mm,segment length=2mm,post length=1mm}]
(\hhh*3+\hhhh*3,-\vvv*1.5) -- (-\hhh*1+\hhhh*4,-\vvv*1.5);
\node[above] at (\hhh*2.5+\hhhh*4,-\vvv*5.1) {\small $\trd(\balalpp{1})$};
\draw[line width=\www, fill=green!10] (\hhh*-1+\hhhh*4,-\vvv*4) rectangle (\hhh*0+\hhhh*4,-\vvv*3);
\draw[line width=\www, fill=yellow!20] (-\hhh*0+\hhhh*4,-\vvv*3) rectangle (\hhh*1+\hhhh*4,-\vvv*2);
\draw[line width=\www, fill=red!20] (\hhh*4+\hhhh*4,\vvv*0) rectangle (\hhh*5.2+\hhhh*4,-\vvv*1);
\draw[line width=\www, fill=red!10] (\hhh*0+\hhhh*4,-\vvv*2) rectangle (\hhh*4+\hhhh*4,-\vvv*1);
\draw[-,dotted] (\hhh*1+\hhhh*4,-\vvv*2) to (\hhh*1+\hhhh*4,\vvv*-1);
\draw[-,dotted] (\hhh*2+\hhhh*4,-\vvv*2) to (\hhh*2+\hhhh*4,\vvv*-1);
\draw[-,dotted] (\hhh*3+\hhhh*4,-\vvv*2) to (\hhh*3+\hhhh*4,\vvv*-1);
\node at (-\hhh*0.4+\hhhh*4,-\vvv*3.6) {\tiny $\tS'_{k_1}$};
\node at (\hhh*0.55+\hhhh*4,-\vvv*2.6) {\tiny $\tS'_{k_2}$};
\node at (\hhh*2+\hhhh*4,-\vvv*1.5) {\tiny $\tS'_{k_0}$};
\draw [decorate,
    decoration = {calligraphic brace}] (\hhh*4+\hhhh*4,\vvv*0.1) --  (\hhh*5.3+\hhhh*4,\vvv*0.1) node[above,midway] {\tiny $k_1-1$};
\node at (\hhh*4.65+\hhhh*4,-\vvv*0.6) {\tiny $\tS'_{k_{\hspace{-0.1ex} - \hspace{-0.2ex} 1}}$};
\end{tikzpicture}
\\
\begin{tikzpicture}
\def\hhh{5mm}
\def\vvv{5.5mm}
\def\www{0.20mm}
\def\hhhh{35mm}
%%%%%%%%%%%%%%%%%%%%%%% 1st
\node[above] at (\hhh*2+\hhhh*0,-\vvv*5.1) {\small $\trd(\balalp)$};
\draw[line width=\www, fill=red!20] (\hhh*0,\vvv*0) rectangle (\hhh*4,-\vvv*1);
\draw[line width=\www, fill=yellow!20] (-\hhh*0,-\vvv*2) rectangle (\hhh*1,-\vvv*1);
\draw[line width=\www, fill=green!10] (\hhh*-1,-\vvv*4) rectangle (\hhh*-0,-\vvv*2);
\draw[-,dotted] (\hhh*1,-\vvv*1) to (\hhh*1,\vvv*0);
\draw[-,dotted] (\hhh*2,-\vvv*1) to (\hhh*2,\vvv*0);
\draw[-,dotted] (\hhh*3,-\vvv*1) to (\hhh*3,\vvv*0);
\draw[-,dotted] (\hhh*0,-\vvv*2) to (\hhh*1,-\vvv*2);
\node[] at (\hhh*2,-\vvv*0.5) {\tiny $\tS_{k_0}$};
\node[] at (\hhh*0.5,-\vvv*1.5) {\tiny $\tS_{k_2}$};
\node[] at (\hhh*-0.5,-\vvv*3) {\tiny $\tS_{k_1}$};
%%%%%%%%%%%%%%%%%%%%%%%%. 2nd
\draw[->,decorate,decoration={snake,amplitude=.4mm,segment length=2mm,post length=1mm}]
(\hhh*4,-\vvv*1.5) -- (-\hhh*1.5+\hhhh*1,-\vvv*1.5);
\draw[line width=\www, fill=green!10] (\hhh*-1+\hhhh,-\vvv*4) rectangle (\hhh*0+\hhhh,-\vvv*2);
\draw[-,dotted] (\hhh*-1+\hhhh,\vvv*-3) to (\hhh*0+\hhhh,\vvv*-3);
\node[] at (\hhh*-0.5+\hhhh,-\vvv*3) {\tiny $\tS'_{k_1}$};
%%%%%%%%%%%%%%%%%%%%%%%% 3rd
\def\hhhh{30mm}
\draw[->,decorate,decoration={snake,amplitude=.4mm,segment length=2mm,post length=1mm}]
(\hhh*1.5+\hhhh*1,-\vvv*1.5) -- (-\hhh*3+\hhhh*2,-\vvv*1.5);
\draw[line width=\www, fill=red!10] (\hhh*-1+\hhhh*2,-\vvv*2) rectangle (\hhh*2+\hhhh*2,-\vvv*1);
\draw[line width=\www, fill=green!10] (\hhh*-2+\hhhh*2,-\vvv*4) rectangle (\hhh*-1+\hhhh*2,-\vvv*2);
\node[] at (\hhh*-1.45+\hhhh*2,-\vvv*3) {\tiny $\tS'_{k_1}$};
\node[] at (\hhh*0.6+\hhhh*2,-\vvv*1.55) {\tiny $\tS'_{k_0}$};
\draw [decorate,
    decoration = {calligraphic brace}] (\hhh*-1+\hhhh*2,-\vvv*0.85) --  (\hhh*2+\hhhh*2,-\vvv*0.85);
\node[] at (\hhh*0.5+\hhhh*2,-\vvv*0.4) {\tiny $\ell(\alpha)-k_2+2$};
\draw[-,dotted] (\hhh*1+\hhhh*2,\vvv*-2) to (\hhh*1+\hhhh*2,\vvv*-1);
\draw[-,dotted] (\hhh*0+\hhhh*2,\vvv*-2) to (\hhh*0+\hhhh*2,\vvv*-1);
%%%%%%%%%%%%%%%%%%%%%%%% 4th
\def\hhhh{30mm}
\draw[->,decorate,decoration={snake,amplitude=.4mm,segment length=2mm,post length=1mm}]
(\hhh*2.5+\hhhh*2,-\vvv*1.5) -- (-\hhh*2.5+\hhhh*3,-\vvv*1.5);
\draw[line width=\www, fill=red!20] (\hhh*1+\hhhh*3,\vvv*0) rectangle (\hhh*3+\hhhh*3,-\vvv*1);
\draw[line width=\www, fill=red!10] (\hhh*-1+\hhhh*3,-\vvv*2) rectangle (\hhh*2+\hhhh*3,-\vvv*1);
\draw[line width=\www, fill=green!10] (\hhh*-2+\hhhh*3,-\vvv*4) rectangle (\hhh*-1+\hhhh*3,-\vvv*2);
\node[] at (\hhh*-1.45+\hhhh*3,-\vvv*3) {\tiny $\tS'_{k_1}$};
\node[] at (\hhh*0.6+\hhhh*3,-\vvv*1.55) {\tiny $\tS'_{k_0}$};
\draw [decorate,
    decoration = {calligraphic brace}] (\hhh*1+\hhhh*3,\vvv*0.15) --  (\hhh*3+\hhhh*3,\vvv*0.15) node[above,midway] {\tiny $k_2-1$};
\node[] at (\hhh*2+\hhhh*3,-\vvv*0.5) {\tiny $\tS'_{k_{-1}}$};
\draw[-,dotted] (\hhh*0+\hhhh*3,\vvv*-2) to (\hhh*0+\hhhh*3,\vvv*-1);
\draw[-,dotted] (\hhh*1+\hhhh*3,\vvv*-2) to (\hhh*1+\hhhh*3,\vvv*-1);
\draw[-,dotted] (\hhh*2+\hhhh*3,\vvv*-1) to (\hhh*2+\hhhh*3,\vvv*0);
\node[above] at (\hhh*1+\hhhh*3,-\vvv*5.2) {\small $\trd(\balalpp{2})$};
\node at (\hhhh*4.6,-\vvv*3.2) {};
\end{tikzpicture}

\caption{The construction of $\trd(\balalpp{1})$ and $\trd(\balalpp{2})$ when $\alpha = (1,3,2,1)$}
\protect \label{Fig: rd balalppj}
\end{figure}

Now, let us construct $\partial^1: \bfP_{\balalp}\ra \bdI$.
Choose any tableau $T$ in $\SRT(\balalp)$.
Recall that $\bfw(T)$ is the word obtained by reading the entries of $T$ from left to right starting with the bottom row.
Let $\bfw(T) = w_1 w_2 \cdots w_n$.
For each $1 \leq j \leq m$, we consider the subword $\bfw_{T;j}$ of $\bfw(T)$ defined by
\begin{align}\label{Eq: w(T;j)}
\bfw_{T;j} := w_{u_1(j)} w_{u_2(j)} \cdots w_{u_{l_j}(j)},
\end{align}
where the subscripts $u_i(j)$'s are defined via the following recursion:
\begin{align*}
&u_1(j) = \sum_{1 \le r \le j} (\alpha_{k_r}-1),\\
&u_{i+1}(j) = \min \{u_i(j) < u \le n - \ell(\alpha) \mid w_u < w_{u_i(j)} \} \quad (i \ge 1), \text{ and }\\
&l_j := \max \{i \mid u_i(j) < \infty \}.
\end{align*}
In the second identity, whenever $\{u_i(j) < u \le n - \ell(\alpha) \mid w_u < w_{u_i(j)} \}=\emptyset$,
we set $u_{i+1}(j):=\infty$.
Henceforth we simply write $u_i$'s for $u_i(j)$'s
and thus $\bfw_{T;j}= w_{u_1} w_{u_2} \cdots w_{u_{l_j}}$.
Given an arbitrary word $w$, we use $\mathsf{end}(w)$ to denote the last letter of $w$.
With the notations above, we introduce the following two sets:
\begin{align*}
\begin{aligned}
\tA_{T;j} &: = \{y \in T(\tS_{k_0}) \mid y > {\sf end}(\bfw_{T;j})\}, \\
\setA{T}{j}  &: =  \left\{ A \subseteq {\tt A}_{T;j}   \mid |A| = \ell(\alpha)-k_j+1 \right\}.
\end{aligned}
\end{align*}
For $A \in \setA{T}{j}$, we define $\tau_{T;j;A}$ to be an SRT of shape $\balalpp{j}$
which is uniquely determined by the following conditions:
\begin{enumerate}[label = {\rm (\roman*)}]
\item
$\tau_{T;j;A}(\tS'_{k_{-1}}) = T(\tS_{k_0}) \setminus A$,
\item
$\tau_{T;j;A}(\tS'_{k_0}) =\{{\sf end}(\bfw_{T;j})\} \cup A$,
\item
$\tau_{T;j;A}(\tS'_{k_r}) = T(\tS_{k_r})$ for $1 \leq r < j$,
\item
$\tau_{T;j;A}(\tS'_{k_j}) = T(\tS_{k_j}) \setminus \{w_{u_1} \}$, and
\item
for $j < r \leq m$, $\tau_{T;j;A}(\tS'_{k_r})$ is obtained from $T(\tS_{k_r})$ by substituting $w_{u_{i}}$ with $w_{u_{i-1}}$
for $w_{u_{i}}$'s ($1 < i \le l_j$) contained in $T(\tS_{k_r})$.
\end{enumerate}
We next explain the notion of the {\it signature} $\sgn(A)$ of $A$.
Enumerate the elements in $\tA_{T;j}$ in the increasing order
\[
a_1 < a_2 < \cdots < a_{|\tA_{T;j}|}.
\]
Then, let $A^1_{T;j}$ be the set of the consecutive $(\ell(\alpha)-k_j+1)$ elements starting from the rightmost and moving to the left, precisely,
$$
A^1_{T;j} = \{a_{|\tA_{T;j}|-\ell(\alpha)+k_j},a_{|\tA_{T;j}|-\ell(\alpha)+k_j+1}, \ldots,a_{|\tA_{T;j}|} \}.
$$
There is a natural right $\SG_{|\tA_{T;j}|}$-action on $\tA_{T;j}$ given by
\begin{align}\label{eq: SG action on ATj}
a_i \cdot \omega = a_{\omega^{-1}(i)} \text{ for } 1 \le i \le |\tA_{T;j}| \text{ and }\omega \in \SG_{|\tA_{T;j}|}.
\end{align}
We define $\sgn(A) := (-1)^{\ell(\omega^1)}$,
where $\omega^1$ is any minimal length permutation in  $\{\omega \in \SG_{|\tA_{T;j}|} \mid A = A^1_{T;j} \cdot \omega\}$.

For each $1 \leq j \leq m$, set
$$
\bstau{T}{j}:=
\displaystyle \sum_{A \in \setA{T}{j}} \sgn(A) \tau_{T;j;A},
$$
where the summation in the right hand side is zero in case where $\setA{T}{j}=\emptyset$.
Finally, we define a $\C$-linear map
$$
\partial^1: \bfP_{\balalp}\ra \bdI,
\quad
T \mapsto \sum_{1 \leq j \leq m} \bstau{T}{j}
$$
with $\bdI$ in \eqref{Eq: I1}.

\begin{theorem}\label{Thm: Main Section4}
{\rm (This will be proven in Subsection~\ref{Subsec: Proof of Theorem4}.)}
Let $\alpha$ be a composition of $n$.
\begin{enumerate}[label = {\rm (\alph*)}]
\item $\partial^1: \bfP_{\balalp} \ra \bdI$ is an $H_n(0)$-module homomorphism.
\item The sequence
\begin{equation*}
\begin{tikzcd}
\calV_\alpha \arrow[r, "\injhull"] & \bfP_{\balalp} \arrow[r, "\partial^1"] & \displaystyle \bdI
\end{tikzcd}
\end{equation*}
is exact.

\item
The $H_n(0)$-module homomorphism
\[
\opone: \Omega^{-1}(\calV_\alpha) \ra \bdI, \quad
T + \injhull(\calV_\alpha) \mapsto \partial^1(T)
\quad  (T \in \Theta(\calV_\alpha))
\]
induced from $\partial^1$ is an injective hull of $\Omega^{-1}(\calV_\alpha)$.

\item
Let
$\calL(\alpha) := \bigcup_{1 \leq j \leq m} [\balalpp{j}]$, which is viewed as a multiset.
Then we have
\[\Ext_{H_n(0)}^1(\bfF_{\beta},\calV_\alpha)
\cong \begin{cases}
\mathbb C^{[\calL(\alpha):\beta^\rmr]} & \text{ if } \beta^\rmr \in \calL(\alpha)\\
0 & \text{otherwise,}
\end{cases}
\]
where $[\calL(\alpha):\beta^\rmr]$ denotes the multiplicity of $\beta^\rmr$ in $\calL(\alpha)$.
\end{enumerate}
\end{theorem}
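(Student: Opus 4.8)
The plan is to prove parts (a), (b) and (c) in turn --- together they establish that the displayed four-term sequence is a minimal injective presentation of $\calV_\alpha$ --- and then to read off (d) formally. The substance lies in (a)--(c), and of these I expect (a) to be the main obstacle.

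For (a), since the target $\bdI$ is a direct sum it suffices to prove that for each $j$ the component map $\partial^{1,j}\colon\bfP_{\balalp}\to\bfP_{\balalpp{j}}$, $T\mapsto\bstau{T}{j}$, is an $H_n(0)$-module homomorphism. Fixing $j$ and $i$, I would run through the three cases of the $\SRT$-action \eqref{eq: action for ribbon} on $T\in\SRT(\balalp)$. When $i\notin\Des(T)$ one checks that the reading word $\bfw(T)$, the subword $\bfw_{T;j}$, the set $\tA_{T;j}$, the family $\setA{T}{j}$ and all signs $\sgn(A)$ are unchanged, and that $i\notin\Des(\tau_{T;j;A})$ for every $A\in\setA{T}{j}$; this is a position-chase resting on the fact that $i$ and $i+1$ do not straddle any cell that gets rearranged in forming $\tau_{T;j;A}$. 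When $i$ and $i+1$ lie in the same row of $T$ (so $\pi_i\cdot T=0$), one shows they also lie in the same row of each $\tau_{T;j;A}$, so that $\pi_i\cdot\bstau{T}{j}=0$. The decisive case is $\pi_i\cdot T=s_i\cdot T$: here one tracks how transposing $i$ and $i+1$ propagates through the recursion defining $u_1(j),u_2(j),\dots$, verifying that $\bfw_{s_i\cdot T;j}$ is either $\bfw_{T;j}$ or is obtained from it by the induced transposition, that $\tA_{s_i\cdot T;j}=\tA_{T;j}$ up to that swap, and that the resulting bijection $A\mapsto A'$ between the index families carries $\tau_{T;j;A}$ to $\tau_{T;j;A}$ or to $s_i\cdot\tau_{T;j;A}$ with matching signs; summing over $A$ then gives $\partial^{1,j}(s_i\cdot T)=\pi_i\cdot\partial^{1,j}(T)$. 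The compatibility of the greedy ``running minimum'' recursion $u_{i+1}(j)=\min\{u_i(j)<u\le n-\ell(\alpha)\mid w_u<w_{u_i(j)}\}$ with an adjacent transposition is the heart of the matter, and will require a delicate case split according to where $i$ and $i+1$ sit relative to the indices $u_1(j),\dots,u_{l_j}(j)$ and to the strip $\tS_{k_0}$.

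For (b), recall from the bases given earlier in the section that $\SRT(\balalp)$ is the disjoint union of a basis of $\injhull(\calV_\alpha)$ and the set $\Theta(\calV_\alpha)$ of \eqref{Eq: generators of cosyzygy}. First I would show $\injhull(\calV_\alpha)\subseteq\ker(\partial^1)$: the inequalities $T_j^{1+\delta_{j,m}}>T^1_{m+k_j-1}$ $(1\le j\le m)$ force $|\tA_{T;j}|<\ell(\alpha)-k_j+1$ for every $j$, so $\setA{T}{j}=\emptyset$ and $\bstau{T}{j}=0$; here one uses $w_{u_1(j)}=T_j^{1+\delta_{j,m}}$ and that the $k_j$th entry of $T(\tS_{k_0})$ equals $T^1_{m+k_j-1}$, arguing by induction on $j$. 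For the reverse inclusion it suffices to show $\partial^1$ is injective on the span of $\Theta(\calV_\alpha)$: since $\bfP_{\balalp}=\injhull(\calV_\alpha)\oplus\langle\Theta(\calV_\alpha)\rangle$ and $\ker(\partial^1)\supseteq\injhull(\calV_\alpha)$, that injectivity forces $\ker(\partial^1)=\injhull(\calV_\alpha)$. I would get the injectivity by a triangularity argument: for $T\in\Theta(\calV_\alpha)$ let $j_0$ be the least index with $T_{j_0}^{1+\delta_{j_0,m}}<T^1_{m+k_{j_0}-1}$; then $\tau_{T;j_0;A^1_{T;j_0}}$ appears in $\bstau{T}{j_0}$ with sign $+1$, and with respect to a suitable total order on $\SRT(\balalp)$ refining the order induced by reading words, this tableau determines $T$ and does not occur in $\partial^1(T')$ for $T'\in\Theta(\calV_\alpha)$ below $T$, so the images $\{\partial^1(T):T\in\Theta(\calV_\alpha)\}$ are linearly independent.

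For (c), parts (a) and (b) already make $\opone\colon\Omega^{-1}(\calV_\alpha)\to\bdI$ a well-defined injective $H_n(0)$-module homomorphism, so by the socle criterion recalled at the start of the section it remains only to verify $\soc(\bdI)\subseteq\rmIm(\partial^1)$. By Theorem~\ref{thm: bfP isom to calP}, $\bfP_{\balalpp{j}}\cong\bigoplus_{\gamma\in[\balalpp{j}]}\bfP_\gamma$, so $\soc(\bdI)=\bigoplus_{\gamma\in\calL(\alpha)}\soc(\bfP_\gamma)$, each $\soc(\bfP_\gamma)$ being one-dimensional and spanned by the unique ``sink'' $\SRT$ of the relevant shape. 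For each such socle generator I would exhibit a single tableau $T\in\SRT(\balalp)$ for which $\setA{T}{j'}=\emptyset$ for all indices $j'$ except one value $j$, for which $\setA{T}{j}$ is a singleton, and for which the associated term $\tau_{T;j;A}$ is exactly that sink; then $\partial^1(T)=\pm\tau_{T;j;A}$ lies in the submodule $\rmIm(\partial^1)$, which therefore contains the whole simple $\soc(\bfP_\gamma)$. Ranging over $\gamma\in\calL(\alpha)$ yields $\soc(\bdI)\subseteq\rmIm(\partial^1)$, finishing (c). Finally (d) is formal: since $\opone$ is an injective hull, $\soc(\Omega^{-1}(\calV_\alpha))=\soc(\bdI)$, hence
\[
\Ext_{H_n(0)}^1(\bfF_\beta,\calV_\alpha)\cong\Hom_{H_n(0)}(\bfF_\beta,\Omega^{-1}(\calV_\alpha))\cong\Hom_{H_n(0)}(\bfF_\beta,\soc(\bdI)),
\]
and since $\soc(\bfP_\gamma)\cong\bfF_{\gamma^\rmr}$ this last space has dimension $\#\{\gamma\in\calL(\alpha):\gamma^\rmr=\beta\}=[\calL(\alpha):\beta^\rmr]$, which is the claimed answer.
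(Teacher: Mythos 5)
Your overall plan for (a), (b), (d) tracks the paper's proof, and your (b) sketch (kernel containment plus a triangularity argument to show $\partial^1$ is injective on the span of $\Theta(\calV_\alpha)$) is in the right spirit — the paper achieves this with a maximal-in-Bruhat-order element rather than a ``least $j_0$'', but the strategy matches. There are, however, two genuine gaps.

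First, your Case $\pi_i \cdot T = 0$ in part (a) asserts that if $i$ and $i+1$ lie in the same row of $T$ (hence both in $T(\tS_{k_0})$) then they lie in the same row of every $\tau_{T;j;A}$. This is false. The entries $i$ and $i+1$ are distributed between $\tS'_{k_{-1}}$ and $\tS'_{k_0}$ according to whether they lie in $A$ or not: if, say, $i \in A$ but $i+1 \notin A$, then $i \in \tau_{T;j;A}(\tS'_{k_0})$ while $i+1 \in \tau_{T;j;A}(\tS'_{k_{-1}})$, and these are different rows, so $\pi_i \cdot \tau_{T;j;A} \neq 0$. The vanishing $\pi_i \cdot \bstau{T}{j} = 0$ is achieved not termwise but by a sign cancellation: the involution $A \mapsto (A\setminus\{i\})\cup\{i+1\}$ between $\{A : i\in A,\, i+1\notin A\}$ and $\{A : i\notin A,\, i+1\in A\}$ flips $\sgn$ and satisfies $\tau_{T;j;(A\setminus\{i\})\cup\{i+1\}} = s_i \cdot \tau_{T;j;A}$, and the two families of terms annihilate each other under $\pi_i$. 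Without this cancellation your Case 2 argument does not go through.

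Second, and more seriously, your plan for (c) rests on the premise that each simple summand $\soc(\bfP_\gamma) \subseteq \bfP_{\balalpp{j}}$ is spanned by a single tableau of $\SRT(\balalpp{j})$ — ``the unique sink of the relevant shape'' — and that it suffices to produce one preimage $T$ with $\partial^1(T) = \pm\tau_{T;j;A}$ equal to that sink. But under the isomorphism $\bfP_{\balalpp{j}} \cong \bigoplus_{\gamma\in[\balalpp{j}]}\bfP_\gamma$ the image of $T^\leftarrow_\gamma$ inside $\bfP_{\balalpp{j}}$ is a signed \emph{sum} of many tableaux of shape $\balalpp{j}$, namely $\opi_{\longeltt{\gamma}{j}} \cdot \TLba{\gamma}{\balalpp{j}}$; this is the content of Lemma~\ref{Lem: nessary lemmas II}. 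The only single tableau in $\SRT(\balalpp{j})$ that lies in $\soc(\bfP_{\balalpp{j}})$ is the global sink $T^\leftarrow_{\balalpp{j}}$, which accounts for exactly one of the $|[\balalpp{j}]|$ simple summands. To finish (c) one must exhibit the entire signed sum in $\rmIm(\partial^1)$, which the paper does by producing a matching linear combination of preimages $\sigmatabsh{\sigma}$ indexed by $\sigma \preceq_L \longeltt{\gamma}{j}$ (and, when $\min(\tJ{\gamma}{\balalpp{j}}) \le \ell(\alpha)$, by a further reorganization of the sum using the parabolic factorization of $\pqlong{[\ell(\alpha)]}{|\tS'_{k_0}|}$). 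Your proposal omits this entirely, and also leaves unexplained why the simple submodule generated by a single image $\tau_{T;j;A}$ should be the particular $\soc(\bfP_\gamma)$ you are aiming for rather than some other simple of $\soc(\bdI)$.
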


\begin{example}
Let $\alpha = (2,1,2,3) \models 8$.
Then $\calK(\alpha) = \{0,1,3,4\}$ and $\balalp = (1) \oplus (1) \oplus (3,1^3)$. By definition we get
\begin{eqnarray*}
\balalpp{1} & = & (1) \oplus (3,1^4), \\
\balalpp{2} & = & (1) \oplus (3,1^2) \oplus (1^2), \\
\balalpp{3} & = & (1) \oplus (1) \oplus (2^2,1^2).
\end{eqnarray*}

(a) Let $T =
\ytableausetup{aligntableaux=center, boxsize=0.9em}
\begin{ytableau}
\none & \none & 1 & 3 & 7 & 8\\
\none & \none & 4 \\
\none & \none & 5 \\
\none & 2 \\
6
\end{ytableau}$.
Then one sees that
\begin{displaymath}
\begin{array}{llll}
\bfw_{T;1} = 6 \ 2 \quad & \ebfw{T}{1} = 2  \quad & \tA_{T;1} = \{3, 7,8\} \quad & \setA{T}{1} = \emptyset,\\
\bfw_{T;2} = 2   & \ebfw{T}{2} = 2  & \tA_{T;2} = \{3,7,8 \}  & \setA{T}{2} = \{\{3,7\},\{3,8\},\{7,8\}\}, \\
\bfw_{T;3} = 4  & \ebfw{T}{3} = 4   & \tA_{T;3} = \{7,8 \}  & \setA{T}{3} = \{\{7\},\{8\} \}.
\end{array}
\end{displaymath}
Since
\begin{displaymath}
\begin{split}
&\tau_{T;2;\{3,7\}} =
\begin{ytableau}
\none & \none & \none & \none & 1 & 8\\
\none & 2 & 3 & 7 \\
\none & 4 \\
\none & 5\\
6
\end{ytableau}
\qquad
\tau_{T;2;\{3,8\}} =
\begin{ytableau}
\none & \none & \none & \none & 1 & 7\\
\none & 2 & 3 & 8 \\
\none & 4 \\
\none & 5 \\
6
\end{ytableau}
\qquad
\tau_{T;2;\{7,8\}} =
\begin{ytableau}
\none & \none & \none & \none & 1 & 3\\
\none & 2 & 7 & 8 \\
\none & 4 \\
\none & 5 \\
6
\end{ytableau}
\\
&\tau_{T;3;\{7\}} =
\begin{ytableau}
\none & \none & \none &  1 & 3 & 8\\
\none & \none & 4 & 7\\
\none & \none & 5  \\
\none & 2\\
6
\end{ytableau}
\qquad
\tau_{T;3;\{8\}} =
\begin{ytableau}
\none & \none & \none &  1 & 3 & 7\\
\none & \none & 4 & 8\\
\none & \none & 5  \\
\none & 2\\
6
\end{ytableau},
\end{split}
\end{displaymath}
it follows that
\begin{align*}
\bstau{T}{1} = 0 \qquad
\bstau{T}{2} = \tau_{T;2;\{3,7\}} - \tau_{T;2;\{3,8\}} + \tau_{T;2;\{7,8\}} \qquad
\bstau{T}{3} = - \tau_{T;3;\{7\}} + \tau_{T;3;\{8\}}.
\end{align*}
Therefore,
$$
\partial^1(T) = (\tau_{T;2;\{3,7\}} - \tau_{T;2;\{3,8\}} + \tau_{T;2;\{7,8\}}) + (- \tau_{T;3;\{7\}} + \tau_{T;3;\{8\}}).
$$

(b)
Note that
\begin{align*}
[\balalpp{1}] & = \left\{(1,3,1^4), (4,1^4)\right\}, \\
[\balalpp{2}] & = \left\{(1,3,1^4),(1,3,1,2,1),(4,1^4),(4,1,2,1)\right\}, \\
[\balalpp{3}] & = \left\{(1^2,2^2,1^2),(1,3,2,1^2),(2^3,1^2),(4,2,1^2) \right\}.
\end{align*}
Theorem~\ref{Thm: Main Section4}(d) implies that
$$
\dim \Ext^1_{H_n(0)}(\bfF_{\beta},\calV_\alpha) =
\begin{cases}
1 & \text{ if } \beta^\rmr \in   \calL(\alpha) \setminus \{(1, 3, 1^4),(4, 1^4)\}, \\
2 & \text{ if } \beta^\rmr \in \{(1,3,1^4), (4, 1^4)\}, \\
0 & \text{ otherwise.} 
\end{cases}
$$
\end{example}

\section{$\Ext^i_{H_n(0)}(\calV_\alpha, \calV_\beta)$ with $i=0,1$}
\label{sec: Ext1 VV}
In the previous sections, we computed $\Ext^1_{H_n(0)}(\calV_\alpha, \bfF_\beta)$ and
$\Ext^1_{H_n(0)}(\bfF_\beta, \calV_\alpha)$.
In this section, we focus on $\Ext^1_{H_n(0)}(\calV_\alpha, \calV_\beta)$
and $\Ext^0_{H_n(0)}(\calV_\alpha, \calV_\beta)\, (={\rm Hom}_{H_n(0)}(\calV_{\alpha}, \calV_{\beta}))$.

Let $M, N$ be finite dimensional $H_n(0)$-modules.
Given a short exact sequence
\begin{equation*}
\begin{tikzcd}
0 \arrow[r] & \Omega(M) \arrow[r, "\iota"] & P_0 \arrow[r, "\pi"] & M \arrow[r] & 0
\end{tikzcd}
\end{equation*}
with $(P_0,\pi)$ a projective cover of $M$,
it is well known that
\begin{equation*}
\Ext_{H_n(0)}^1(M,N)
\cong \frac{{\rm Hom}_{H_n(0)}(\Omega(M),N)}
{{\rm Im} \,\iota^\ast},
\end{equation*}
where $\iota^\ast:{\rm Hom}_{H_n(0)}(P_0, N) \to {\rm Hom}_{H_n(0)}(\Omega(M),N)$
is given by composition with $\iota$.
The kernel of $\iota^\ast$ equals
\[ \{f\in {\rm Hom}_{H_n(0)}(P_0,N) \mid f|_{\Omega(M)}=0\},\]
and therefore
\begin{align}\label{Computing Hom(M,N)}
\ker(\iota^\ast)
\cong {\rm Hom}_{H_n(0)}(P_0/\Omega(M),N)\cong {\rm Hom}_{H_n(0)}(M,N).
\end{align}
This says that $\Ext_{H_n(0)}^1(M,N)=0$ if and only if, as $\mathbb C$-vector spaces,
\begin{align}\label{Hom in the rigid case}
{\rm Hom}_{H_n(0)}(P_0, N) \cong  {\rm Hom}_{H_n(0)}(\Omega(M), N) \oplus {\rm Hom}_{H_n(0)}(M,N).
\end{align}
\begin{definition}
Given a finite dimensional $H_n(0)$-module $M$, we say that $M$ is \emph{rigid} if $\Ext_{H_n(0)}^1(M,M)=0$ and
\emph{essentially rigid} if
${\rm Hom}_{H_n(0)}(\Omega(M),M)=0$.
\end{definition}

Whenever $M$ is essentially rigid, one has that
${\rm Hom}_{H_n(0)}(P_0, M) \cong {\rm End}_{H_n(0)}(M).$
Typical examples of essentially rigid $H_n(0)$-modules are simple modules and projective modules.
Also, the syzygy and cosyzygy modules of a rigid module are also rigid
since
$\Ext_{H_n(0)}^1(M,N)=\Ext_{H_n(0)}^1(\Omega(M),\Omega(N))$
and $M \cong \Omega\Omega^{-1}(M) \oplus ({\rm projective})$
(for example, see \cite{91Benson}).

Let us use $\le_l$ to represent the lexicographic order on compositions of $n$.
Using the results in the preceding sections, we derive some interesting results on $\Ext_{H_n(0)}^1(\calV_\alpha,\calV_{\beta})$.
To do this, we need the following lemmas.

\begin{lemma}{\rm (\cite[Lemma 1.7.6]{91Benson})} \label{dimension of Hom}
Let $M$ be a finite dimensional $H_n(0)$-module. Then
$\dim \Hom_{H_n(0)}( \bfP_\alpha, M)$ is the multiplicity of $\bfF_\alpha$ as a composition factors of $M$.
\end{lemma}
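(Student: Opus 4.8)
The plan is to reduce the assertion to the values of $\dim_\C\Hom_{H_n(0)}(\bfP_\alpha,-)$ on the simple modules $\bfF_\gamma$ and then to propagate these through a composition series of $M$ using exactness; since the statement is exactly \cite[Lemma 1.7.6]{91Benson}, one could alternatively just cite it, but I would supply the argument for completeness. First I would record that $\Hom_{H_n(0)}(\bfP_\alpha,-)$ is exact on finite dimensional $H_n(0)$-modules, because $\bfP_\alpha$ is projective (Subsection~\ref{subsec: PIM}). Hence the function $M\mapsto\dim_\C\Hom_{H_n(0)}(\bfP_\alpha,M)$ is additive on short exact sequences, so it factors through the Grothendieck group $\calG_0(H_n(0))$.

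Writing $[M]=\sum_{\gamma\models n}c_\gamma(M)\,[\bfF_\gamma]$ in $\calG_0(H_n(0))$, where $c_\gamma(M)$ is the multiplicity of $\bfF_\gamma$ as a composition factor of $M$ (well defined by the Jordan--H\"older theorem), additivity yields
\[
\dim_\C\Hom_{H_n(0)}(\bfP_\alpha,M)=\sum_{\gamma\models n}c_\gamma(M)\,\dim_\C\Hom_{H_n(0)}(\bfP_\alpha,\bfF_\gamma).
\]
Concretely, one may instead fix a composition series $0=M_0\subsetneq M_1\subsetneq\cdots\subsetneq M_r=M$, apply the exact functor $\Hom_{H_n(0)}(\bfP_\alpha,-)$ to each short exact sequence $0\to M_{i-1}\to M_i\to M_i/M_{i-1}\to 0$, and add up the resulting dimension identities to obtain the same formula.

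Next I would evaluate the summands, showing $\dim_\C\Hom_{H_n(0)}(\bfP_\alpha,\bfF_\gamma)=\delta_{\alpha,\gamma}$. Any $H_n(0)$-homomorphism $\bfP_\alpha\to\bfF_\gamma$ annihilates $\rad(\bfP_\alpha)$, since $\bfF_\gamma$ is semisimple, so it factors through $\bfP_\alpha/\rad(\bfP_\alpha)\cong\bfF_\alpha$; thus $\Hom_{H_n(0)}(\bfP_\alpha,\bfF_\gamma)\cong\Hom_{H_n(0)}(\bfF_\alpha,\bfF_\gamma)$. Both $\bfF_\alpha$ and $\bfF_\gamma$ are one-dimensional with the explicit $\pi_i$-action recalled in Subsection~\ref{subsec: 0-Hecke alg}, and a $\C$-linear map between them is $H_n(0)$-equivariant precisely when the two actions agree, that is, when $\set(\alpha)=\set(\gamma)$, equivalently $\alpha=\gamma$; in that case the Hom-space is one-dimensional. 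Substituting into the displayed identity gives $\dim_\C\Hom_{H_n(0)}(\bfP_\alpha,M)=c_\alpha(M)$, which is the claim.

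I do not expect a genuine obstacle here: the only inputs beyond standard homological algebra are the projectivity of $\bfP_\alpha$ and the isomorphism $\bfP_\alpha/\rad(\bfP_\alpha)\cong\bfF_\alpha$, both recorded in Subsection~\ref{subsec: PIM}. The only mildly delicate point is ensuring that the multiplicity $c_\alpha(M)$ is well defined, which is exactly what the Jordan--H\"older theorem provides; accordingly, in the paper it would be entirely legitimate to simply cite \cite[Lemma 1.7.6]{91Benson}.
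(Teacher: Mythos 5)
Your proof is correct and is exactly the standard argument underlying the cited result \cite[Lemma 1.7.6]{91Benson}; the paper itself supplies no proof but simply cites Benson, and what you have written is what one finds there (exactness of $\Hom_{H_n(0)}(\bfP_\alpha,-)$, reduction to composition factors, and $\dim\Hom_{H_n(0)}(\bfP_\alpha,\bfF_\gamma)=\delta_{\alpha,\gamma}$ via $\bfP_\alpha/\rad(\bfP_\alpha)\cong\bfF_\alpha$). No gaps.
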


\begin{lemma} {\rm (\cite[Proposition 3.37]{14BBSSZ})} \label{F-expansion od dual immaculate}
The dual immaculate functions $\frakS^*_\alpha$ are fundamental positive.
Specifically, they expand as $\frakS^*_\alpha=\sum_{\beta \le_l \alpha}L_{\alpha, \beta}F_\beta$,
where $L_{\alpha, \beta}$ denotes the number of standard immaculate tableaux $\calT$ of shape $\alpha$ and descent composition $\beta$, i.e., $\comp(\Des(\calT)) = \beta$.
\end{lemma}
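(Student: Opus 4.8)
The plan is to prove Lemma~\ref{F-expansion od dual immaculate} in two essentially independent pieces: (I) identify the coefficients $L_{\alpha,\beta}$ by computing the quasisymmetric characteristic of $\calV_\alpha$, and (II) establish the lexicographic triangularity $L_{\alpha,\beta}=0$ unless $\beta\le_l\alpha$ by a direct combinatorial argument on standard immaculate tableaux.

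For piece (I) I would apply a standard filtration argument for $0$-Hecke modules equipped with a tableau basis. The action in \eqref{eq: action on SIT} sends each basis vector $\calT$ to $\calT$, to $0$, or to another basis vector $s_i\cdot\calT$, and in the last case $i$ lies strictly above $i+1$ in $\calT$, so the statistic $\mathrm{st}(\calT):=\sum_{j=1}^{n} j\cdot(\text{row index of }j\text{ in }\calT)$ strictly decreases when one passes from $\calT$ to $s_i\cdot\calT$. Fixing any total order $\preceq$ on $\SIT(\alpha)$ refining $\mathrm{st}(\calT)<\mathrm{st}(\calT')\Rightarrow\calT\prec\calT'$, the subspaces $\Span\{\calU:\calU\preceq\calT\}$ form a chain of $H_n(0)$-submodules of $\calV_\alpha$ with one-dimensional successive quotients; on the quotient indexed by $\calT$ the generator $\pi_i$ acts as the identity for $i\notin\Des(\calT)$ and as $0$ for $i\in\Des(\calT)$ (in the latter case $\pi_i\cdot\calT$ is $0$ or a strictly $\prec$-smaller basis vector), so this quotient is $\bfF_{\comp(\Des(\calT))}$. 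Hence $[\calV_\alpha]=\sum_{\calT\in\SIT(\alpha)}[\bfF_{\comp(\Des(\calT))}]$ in $\calG_0(H_n(0))$, and applying $\ch$ together with $\ch(\calV_\alpha)=\frakS^*_\alpha$ from Theorem~\ref{Thm: V is cyclic-generated} yields $\frakS^*_\alpha=\sum_\beta L_{\alpha,\beta}F_\beta$ with $L_{\alpha,\beta}=\#\{\calT\in\SIT(\alpha):\comp(\Des(\calT))=\beta\}$, which is manifestly nonnegative. (If one prefers a derivation independent of Theorem~\ref{Thm: V is cyclic-generated}, one can instead expand $\frakS^*_\alpha$ over semistandard immaculate tableaux directly from the noncommutative Bernstein-operator definition and pass to the fundamental expansion by grouping these tableaux by their standardization, as in \cite{14BBSSZ}, each standardization class of shape $\alpha$ contributing exactly $F_{\comp(\Des(\calT))}$.)

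For piece (II) I would argue by induction on $\ell(\alpha)$ that $\comp(\Des(\calT))\le_l\alpha$ for every $\calT\in\SIT(\alpha)$, with equality only when $\calT=\calT_\alpha$. The base case $\ell(\alpha)=1$ is immediate. For the inductive step, note first that the cell $(1,1)$ of $\calT$ must contain the entry $1$. Let $\beta=\comp(\Des(\calT))$ and let $\beta_1$ be its first part, i.e.\ the least element of $\Des(\calT)$, or $n$ if $\Des(\calT)=\emptyset$; I claim $1,2,\ldots,\beta_1$ all lie in the first row of $\calT$. Indeed, for $1\le j<\beta_1$ the condition $j\notin\Des(\calT)$ says $j$ is not strictly above $j+1$, so the row indices of $1,2,\ldots,\beta_1$ are nonincreasing, and they start at $1$. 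Since the first row has $\alpha_1$ cells, $\beta_1\le\alpha_1$. If $\beta_1<\alpha_1$ then $\beta<_l\alpha$ and we are done; if $\beta_1=\alpha_1$ then the first row is exactly $\{1,\ldots,\alpha_1\}$, and deleting it and relabeling the remaining entries in an order-preserving way yields an $\SIT$ of shape $(\alpha_2,\ldots,\alpha_{\ell(\alpha)})$ whose descent composition is $\comp(\Des(\calT))$ with its first part removed. By the inductive hypothesis that composition is $\le_l(\alpha_2,\ldots,\alpha_{\ell(\alpha)})$, and prepending $\alpha_1$ to both sides preserves $\le_l$; hence $\beta\le_l\alpha$. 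Unwinding the equality case forces every row of $\calT$ to be a block of consecutive integers, i.e.\ $\calT=\calT_\alpha$, and conversely $\Des(\calT_\alpha)=\set(\alpha)$ gives $\comp(\Des(\calT_\alpha))=\alpha$; in particular $L_{\alpha,\alpha}=1$ and $L_{\alpha,\beta}=0$ for $\beta\not\le_l\alpha$.

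Piece (I) is routine bookkeeping once the statistic $\mathrm{st}$ has been introduced, so I expect the real obstacle to be piece (II). Its inductive skeleton is short, but everything hinges on two structural points that must be stated and verified carefully: the exact constraints on where the small entries of a standard immaculate tableau may sit (from which $\{1,\ldots,\beta_1\}$ is trapped in the first row), and the clean matching of the descents of $\calT$ above $\alpha_1$ with the descents of its first-row-deleted restriction. Once these are pinned down, the lemma follows, so I would concentrate the write-up there.
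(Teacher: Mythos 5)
The paper does not prove this lemma; it is imported verbatim from \cite[Proposition~3.37]{14BBSSZ}, so there is no in-paper argument against which to compare you. Your proposal supplies a self-contained proof, and it is correct. Both pieces check out. In piece (I), the weighted row statistic $\mathrm{st}(\calT)=\sum_j j\cdot r_j(\calT)$ does strictly decrease along the action \eqref{eq: action on SIT}: if $i\in\Des(\calT)$ and $i,i+1$ are not both in column one, then $\mathrm{st}(s_i\cdot\calT)-\mathrm{st}(\calT)=-(r_{i+1}-r_i)<0$, so any linear order refining $\mathrm{st}$ yields a composition series of $\calV_\alpha$ with one-dimensional quotients $\bfF_{\comp(\Des(\calT))}$, and applying $\ch$ together with Theorem~\ref{Thm: V is cyclic-generated} gives $\frakS^*_\alpha=\sum_\beta L_{\alpha,\beta}F_\beta$. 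In piece (II), the induction is sound: the SIT axioms force $1$ into cell $(1,1)$, the absence of descents below $\beta_1$ traps $\{1,\dots,\beta_1\}$ in the first row, equality $\beta_1=\alpha_1$ forces row one to be $\{1,\dots,\alpha_1\}$, and deleting that row gives an SIT of shape $(\alpha_2,\dots,\alpha_{\ell(\alpha)})$ with descent set $\Des(\calT)\setminus\{\alpha_1\}$ shifted down by $\alpha_1$, so prepending $\alpha_1$ preserves lexicographic comparison and the equality analysis correctly isolates $\calT_\alpha$.

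The one thing to be careful about is the logical dependence in piece (I). You derive the $F$-expansion from $\ch(\calV_\alpha)=\frakS^*_\alpha$ (Theorem~\ref{Thm: V is cyclic-generated}), which in \cite{15BBSSZ} is itself essentially established by matching the composition series of $\calV_\alpha$ against the already-known fundamental expansion of $\frakS^*_\alpha$ from \cite[Proposition~3.37]{14BBSSZ}. So taken at face value this route is circular at the level of the literature, even though it is perfectly valid within the present paper where both facts are available as black boxes. You flag this and offer the genuinely independent derivation — expand $\frakS^*_\alpha$ in monomials via semistandard immaculate tableaux from the Bernstein-operator definition and group by standardization — which is the route \cite{14BBSSZ} actually follows. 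If the goal is a proof from first principles rather than a re-derivation inside this paper's framework, that parenthetical alternative should be promoted to the main argument for piece (I); piece (II) is independent of either choice and can stay as written.
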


We now state the main result of this section.

\begin{theorem} \label{thm: Ext for V V}
Let $\alpha$ be a composition of $n$.

\begin{enumerate}[label = {\rm (\alph*)}]
\item
For all $\beta \le_l \alpha$,
$\Ext_{H_n(0)}^1(\calV_\alpha,\calV_{\beta})=0.$
In particular, $\calV_\alpha$ is essentially rigid.

\item
For all $\beta \le_l \alpha$, we have
\[
{\rm Hom}_{H_n(0)}(\calV_\alpha, \calV_{\beta})
\cong\begin{cases}
\mathbb C & \text{if $\beta =\alpha$,}\\
0 & \text{otherwise.}
\end{cases}
\]

\item
Let $M$ be any nonzero quotient of $\calV_\alpha$. Then \({\rm End}_{H_n(0)}(M) \cong \mathbb C\).
\end{enumerate}
\end{theorem}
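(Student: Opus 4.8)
The plan is to deduce all three parts from the minimal projective presentation of $\calV_\alpha$ supplied by \cref{main thm for V}, together with the lexicographic triangularity of the dual immaculate functions (\cref{F-expansion od dual immaculate}) and \cref{dimension of Hom}. Two preliminary observations will carry almost all of the weight. First, since $\ch$ is a ring isomorphism, \cref{F-expansion od dual immaculate} gives $[\calV_\beta] = \sum_{\gamma \le_l \beta} L_{\beta,\gamma}\,[\bfF_\gamma]$ in the Grothendieck group, so the composition-factor multiplicity of $\bfF_\gamma$ in $\calV_\beta$ equals $L_{\beta,\gamma}$ when $\gamma \le_l \beta$ and is $0$ otherwise; by \cref{dimension of Hom} this yields $\Hom_{H_n(0)}(\bfP_\gamma, \calV_\beta) = 0$ whenever $\gamma \not\le_l \beta$. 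Second --- the combinatorial heart, which I would record as a small lemma --- every composition in $\calJ(\alpha) = \bigcup_{i \in \calI(\alpha)} [\bal^{(i)}]$ is strictly larger than $\alpha$ in lexicographic order: by construction $\bal^{(i)}$ begins with the block $(\alpha_1, \ldots, \alpha_{i-1}, \alpha_i + 1, \alpha_{i+1} - 1)$, so every $\gamma \in [\bal^{(i)}]$ satisfies $\gamma_j = \alpha_j$ for $j < i$ and $\gamma_i = \alpha_i + 1 > \alpha_i$, whence $\gamma >_l \alpha$.

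For part (a), I would combine \cref{main thm for V}(a) with \cref{thm: bfP isom to calP} to realize $\Omega(\calV_\alpha)$ as a homomorphic image of $\bigoplus_{\gamma \in \calJ(\alpha)} \bfP_\gamma$, so that $\Hom_{H_n(0)}(\Omega(\calV_\alpha), \calV_\beta)$ embeds into $\bigoplus_{\gamma \in \calJ(\alpha)} \Hom_{H_n(0)}(\bfP_\gamma, \calV_\beta)$. For $\beta \le_l \alpha$ every $\gamma \in \calJ(\alpha)$ has $\gamma >_l \alpha \ge_l \beta$, so each summand vanishes by the first observation; hence $\Hom_{H_n(0)}(\Omega(\calV_\alpha), \calV_\beta) = 0$. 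Plugging this into the isomorphism $\Ext^1_{H_n(0)}(\calV_\alpha, \calV_\beta) \cong \Hom_{H_n(0)}(\Omega(\calV_\alpha), \calV_\beta)/\rmIm\,\iota^\ast$ recalled above forces $\Ext^1_{H_n(0)}(\calV_\alpha, \calV_\beta) = 0$; taking $\beta = \alpha$ is precisely essential rigidity.

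For parts (b) and (c), I would use the same bounding principle: a surjection $\bfP \twoheadrightarrow X$ embeds $\Hom_{H_n(0)}(X, -)$ into $\Hom_{H_n(0)}(\bfP, -)$. In (b), the projective cover $\bfP_\alpha \to \calV_\alpha$ of \cref{thm: V hom ima of P} thus embeds $\Hom_{H_n(0)}(\calV_\alpha, \calV_\beta)$ into $\Hom_{H_n(0)}(\bfP_\alpha, \calV_\beta)$, of dimension $L_{\beta,\alpha}$; for $\beta <_l \alpha$ this is $0$ since $\alpha \not\le_l \beta$, while for $\beta = \alpha$ it equals $L_{\alpha,\alpha} = 1$ --- the only standard immaculate tableau of shape $\alpha$ with descent composition $\alpha$ being $\calT_\alpha$, by a short induction on rows (each $\{m_{i-1}+1,\ldots,m_i\}$ is forced to fill the $i$th row). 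So $\dim_\C \Hom_{H_n(0)}(\calV_\alpha, \calV_\alpha) \le 1$, with equality since $\id_{\calV_\alpha} \ne 0$. In (c), if $M$ is a nonzero quotient of $\calV_\alpha$, then from the exact sequence $0 \to K \to \calV_\alpha \to M \to 0$ the multiplicity of $\bfF_\alpha$ in $M$ is at most that in $\calV_\alpha$, namely $L_{\alpha,\alpha} = 1$; composing $\bfP_\alpha \to \calV_\alpha$ with $\calV_\alpha \to M$ gives a surjection $\bfP_\alpha \twoheadrightarrow M$, so ${\rm End}_{H_n(0)}(M)$ embeds into $\Hom_{H_n(0)}(\bfP_\alpha, M)$, whose dimension is that multiplicity by \cref{dimension of Hom}, hence at most $1$. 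As $\id_M \ne 0$, I conclude ${\rm End}_{H_n(0)}(M) \cong \C$. (One can also note that $M$ has simple top $\bfF_\alpha$, since $\calV_\alpha/\rad(\calV_\alpha) \cong \bfF_\alpha$, so $M$ is indecomposable, but this is not needed for the dimension count.)

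The one genuinely substantive step is the lexicographic observation that $\calJ(\alpha)$ lies strictly above $\alpha$; once it is in hand, everything else is a short chain of deductions from \cref{main thm for V}, \cref{dimension of Hom}, and \cref{F-expansion od dual immaculate}. The only other point requiring a brief remark is the normalization $L_{\alpha,\alpha} = 1$, which is the classical unitriangularity of $\frakS^\ast_\alpha$ and is elementary.
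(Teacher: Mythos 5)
Your proposal is correct and tracks the paper's proof almost exactly: all three parts rest on the same pillars, namely the minimal projective presentation from Theorem~\ref{main thm for V}, the dimension formula of Lemma~\ref{dimension of Hom}, the lexicographic triangularity of Lemma~\ref{F-expansion od dual immaculate}, and the observation that every $\gamma \in \calJ(\alpha)$ satisfies $\gamma >_l \alpha$. The only minor deviation is in (b), where the paper invokes the direct-sum decomposition~\eqref{Hom in the rigid case} (which uses part (a)) to get the exact dimension, whereas you obtain the needed inequality $\dim\Hom_{H_n(0)}(\calV_\alpha,\calV_\beta) \le L_{\beta,\alpha}$ directly from the surjection $\bfP_\alpha \twoheadrightarrow \calV_\alpha$ without appealing to (a) at all — a small streamlining that reaches the same conclusion.
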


\begin{proof}
(a) Due to Theorem~\ref{main thm for V}, there is a projective resolution of $\calV_\alpha$ of the form 
\begin{equation*}
\begin{tikzcd}
\cdots \arrow[r] & \displaystyle \bigoplus_{i \in \calI(\alpha)} \bfP_{\bal^{(i)}} \arrow[r] & \bfP_\alpha \arrow[r] & \calV_{\alpha} \arrow[r] & 0.
\end{tikzcd}
\end{equation*}
Hence, for the assertion, it suffices to show that
\[{\rm Hom}_{H_n(0)}\left(\bigoplus_{i \in \calI(\alpha)} \bfP_{\bal^{(i)}},\calV_{\beta} \right)=0.\]
Observe that
\begin{align*}
\dim{\rm Hom}_{H_n(0)}\left(\bigoplus_{i \in \calI(\alpha)} \bfP_{\bal^{(i)}},\calV_{\beta} \right)
&=\sum_{ \gamma \in \calJ(\alpha)}\dim{\rm Hom}_{H_n(0)}\left(\bfP_{\gamma},\calV_{\beta} \right)\\
&=\sum_{ \gamma \in \calJ(\alpha)} [\calV_{\beta}:\bfF_{\gamma}] \quad \text{ (by Lemma~\ref{dimension of Hom})}.
\end{align*}
Here $[\calV_{\beta}:\bfF_{\gamma}]$ denotes the multiplicity of $\bfF_{\gamma}$ as a composition factor of $\calV_{\beta}$,
thus equals the coefficient of $F_\gamma$ in the expansion of $\frakS^*_\beta$ into fundamental quasisymmetric functions.
From Lemma~\ref{F-expansion od dual immaculate} it follows that this coefficient vanishes unless $\beta \ge_l \gamma$.
Since $\alpha <_l \gamma$ for all $\gamma \in \calJ(\alpha)$, the assumption $\beta \le_l \alpha$ yields the desired result.

(b) Combining \eqref{Hom in the rigid case}
with (a) yields that
\begin{align*}
{\rm Hom}_{H_n(0)}(\bfP_{\alpha}, \calV_{\beta})\cong  {\rm Hom}_{H_n(0)}(\Omega(\calV_{\alpha}), \calV_{\beta}) \oplus {\rm Hom}_{H_n(0)}(\calV_{\alpha},\calV_{\beta}).
\end{align*}
But, by Lemma~\ref{dimension of Hom} and Lemma~\ref{F-expansion od dual immaculate}, we see that
\begin{align*}
\dim{\rm Hom}_{H_n(0)}(\bfP_{\alpha}, \calV_{\beta})=L_{\beta, \alpha}=
\begin{cases}
1 & \text{ if } \beta =\alpha\\
0 & \text{ otherwise.}
\end{cases}
\end{align*}
This justifies the assertion since $\dim{\rm End}_{H_n(0)}( \calV_{\alpha})\ge 1$.

(c) Let $f:\bfP_\alpha \to M$ be a surjective $H_n(0)$-module homomorphism.
Then
\[{\rm End}_{H_n(0)}(M)\cong
{\rm Hom}_{H_n(0)}(\bfP_{\alpha}/\ker(f), M),\]
and therefore
\[1\le \dim{\rm End}_{H_n(0)}(M)\le
\dim{\rm Hom}_{H_n(0)}(\bfP_{\alpha}, M)=[M: \bfF_\alpha].\]
Now the assertion follows from the inequality $[M: \bfF_\alpha]\le [\calV_\alpha: \bfF_\alpha]=L_{\alpha, \alpha}=1$.
\end{proof}

\begin{remark}
To the best of the authors' knowledge, the classification or distribution of indecomposable rigid modules is
completely unknown. For the reader's understanding, we provide some related examples.

(a) Let $M := \bfP_{(1,2,2)} / H_5(0) \cdot \left\{
\scalebox{0.7}{
\begin{ytableau}
\none & \none & 4 \\
\none & 1 & 5 \\
2 & 3
\end{ytableau}
}
\right\}$.
A simple computation shows that
$M$ is a rigid indecomposable module.
But, since \(\dim \Hom_{H_5(0)}(\Omega(M), M) = 1\),
it is not essentially rigid.

(b) Let $V := \bfP_{(1,2,2)} / H_5(0) \cdot  \left\{
\scalebox{0.7}{$
\begin{ytableau}
\none & \none & 3 \\
\none & 1 & 5 \\
2 & 4
\end{ytableau},~
\begin{ytableau}
\none & \none & 1 \\
\none & 3 & 4 \\
2 & 5
\end{ytableau}$
}
\right\}$.
By adding two $V$'s appropriately, one can produce a non-split sequence
\begin{equation*}
\begin{tikzcd}
0 \arrow[r] & V \arrow[r] & M \arrow[r] & V \arrow[r] & 0.
\end{tikzcd}
\end{equation*}
Hence $V$ is a non-rigid indecomposable module.
\end{remark}

Theorem~\ref{thm: Ext for V V} (b) is no longer valid unless $\beta \le_l \alpha$.
In view of $\calV_\alpha \cong \bfP_{\alpha} / \Omega(\calV_\alpha)$, one can view
${\rm Hom}_{H_n(0)}(\calV_{\alpha},\calV_{\beta})$ as the $\C$-vector space consisting of
$H_n(0)$-module homomorphisms from $\bfP_{\alpha}$ to $\calV_{\beta}$
which vanish on $\Omega(\calV_{\alpha})$.
Therefore, in order to understand ${\rm Hom}_{H_n(0)}(\calV_{\alpha},\calV_{\beta})$,
it is indispensable to understand
${\rm Hom}_{H_n(0)}(\bfP_{\alpha}, \calV_{\beta})$ first.
To do this, let us fix a linear extension $\preccurlyeq_{L}^{\rm r}$ of the partial order $\preccurlyeq^{\rm r}$ on $\SIT(\beta)$ given by
$$
\tau' \preccurlyeq^{\rm r}\tau
\quad \text{if and only if} \quad
\text{$\tau' = \pi_\gamma \cdot \tau$ for some $\gamma\in\SG_n$.}
$$
Given $f\in {\rm Hom}_{H_n(0)}(\bfP_{\alpha},\calV_{\beta})$, let
$f(T_\alpha)=\sum_{\calT \in \SIT(\beta)}c_{f,\calT}\calT$.
We define ${\mathsf {Lead}}(f)$ to be the largest tableau in $\{\calT \in \SIT(\beta): c_{f,\calT}\ne 0\}$
with respect to $\preccurlyeq_{L}^{\rm r}$.
When $f=0$, ${\mathsf {Lead}}(f)$ is set to be $\emptyset$.
\begin{theorem}\label{Prop: Hom V V}
Let $\alpha, \beta$ be compositions of $n$ and let $\mathfrak B$ be the set of standard immaculate tableaux $U$ of shape $\beta$ with $\Des(U)={\rm set}(\alpha)$.
\begin{enumerate}[label = {\rm (\alph*)}]
\item
For each standard immaculate tableau $U$ of shape $\beta$ with $\Des(U)={\rm set}(\alpha)$,
there exists a unique homomorphism $f_U\in {\rm Hom}_{H_n(0)}(\bfP_{\alpha},\calV_{\beta})$
such that ${\mathsf {Lead}}(f)=U$, $c_{f,U}=1$, and $c_{f,U'}=0$ for all $U'\in \mathfrak B\setminus \{U\}$.
\item
The dimension of ${\rm Hom}_{H_n(0)}(\calV_{\alpha},\calV_{\beta})$ is the same as the dimension of
\[\{(c_U)_{U\in \mathfrak B} \in \mathbb C^{|\mathfrak B|}: \sum_{U}c_U \,\pi_{[m_{i-1} + 1, m_i]} \cdot f_U(T_\alpha)=0 \text{ for all $i \in \calI(\alpha)$} \}.\]
\end{enumerate}
\end{theorem}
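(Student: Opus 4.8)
The plan is to deduce (b) formally from (a) together with the description of $\Omega(\calV_\alpha)$ in Theorem~\ref{main thm for V}, and to prove (a) by a triangularity argument with respect to $\preccurlyeq_{L}^{\rm r}$. Since $\bfP_\alpha$ is cyclic with generator $T_\alpha$, every $f\in\Hom_{H_n(0)}(\bfP_\alpha,\calV_\beta)$ is determined by $v:=f(T_\alpha)=\sum_{\calT\in\SIT(\beta)}c_{f,\calT}\calT$, and the assignment $f\mapsto(c_{f,U})_{U\in\mathfrak B}$ is $\C$-linear. By Lemma~\ref{dimension of Hom} and Lemma~\ref{F-expansion od dual immaculate}, $\dim_\C\Hom_{H_n(0)}(\bfP_\alpha,\calV_\beta)=[\calV_\beta:\bfF_\alpha]=L_{\beta,\alpha}=|\mathfrak B|$. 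Hence all of (a) follows once one shows that $f\mapsto(c_{f,U})_{U\in\mathfrak B}$ is injective: it is then an isomorphism, $f_U$ is defined as the preimage of the $U$-th coordinate vector, the normalizations $c_{f_U,U}=1$ and $c_{f_U,U'}=0$ for $U'\in\mathfrak B\setminus\{U\}$ are immediate, and uniqueness is clear. Injectivity in turn reduces to the key claim that \emph{${\mathsf{Lead}}(f)\in\mathfrak B$ for every nonzero $f$}, i.e.\ $\Des({\mathsf{Lead}}(f))={\rm set}(\alpha)$; indeed, if $(c_{f,U})_{U\in\mathfrak B}=0$ and $f\neq0$, then ${\mathsf{Lead}}(f)$ would be an element of $\mathfrak B$ with nonzero coefficient in $v$, a contradiction. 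The key claim also yields ${\mathsf{Lead}}(f_U)=U$.

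The main work is the key claim, which I would prove by passing to a quotient of $\calV_\beta$. Set $\calT_0:={\mathsf{Lead}}(f)$ and $L:=\{\calT\in\SIT(\beta):\calT\prec_{L}^{\rm r}\calT_0\}$. Since $\preccurlyeq_{L}^{\rm r}$ extends $\preccurlyeq^{\rm r}$, $L$ is an order ideal of $(\SIT(\beta),\preccurlyeq^{\rm r})$, and because the generators $\pi_i$ never raise a tableau in $\preccurlyeq^{\rm r}$ (see \eqref{eq: action on SIT}), $\Span_\C(L)$ is an $H_n(0)$-submodule of $\calV_\beta$, so $Q:=\calV_\beta/\Span_\C(L)$ has $\C$-basis $\{\bar\calT:\calT\succcurlyeq_{L}^{\rm r}\calT_0\}$. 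In $Q$ no basis vector lies strictly below $\bar\calT_0$ for $\preccurlyeq^{\rm r}$, so by \eqref{eq: action on SIT}: $\pi_i\bar\calT_0=\bar\calT_0$ if $i\notin\Des(\calT_0)$, while if $i\in\Des(\calT_0)$ then either $\pi_i\calT_0=0$ already or $\pi_i\calT_0=s_i\calT_0\prec^{\rm r}\calT_0$ belongs to $L$; in both cases $\pi_i\bar\calT_0=0$. Thus $H_n(0)\bar\calT_0\cong\bfF_\gamma$ with ${\rm set}(\gamma)=\Des(\calT_0)$. On the other hand, the composite $g:\bfP_\alpha\xrightarrow{f}\calV_\beta\twoheadrightarrow Q$ sends $T_\alpha$ to $c_{f,\calT_0}\bar\calT_0\neq0$ (all other terms of $v$ lie in $\Span_\C(L)$), so $g$ is nonzero and $g(\bfP_\alpha)=H_n(0)\bar\calT_0$ is a nonzero quotient of $\bfP_\alpha$, hence has top $\bfP_\alpha/\rad(\bfP_\alpha)\cong\bfF_\alpha$; being simple, $H_n(0)\bar\calT_0$ equals its own top, forcing $\gamma=\alpha$, i.e.\ $\Des(\calT_0)={\rm set}(\alpha)$. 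Setting up this quotient correctly and invoking the fact that a nonzero quotient of $\bfP_\alpha$ has top $\bfF_\alpha$ is the delicate point; no heavy combinatorics is involved.

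Finally, for (b): composing with the projective cover $\Phi:\bfP_\alpha\to\calV_\alpha$ of Theorem~\ref{thm: V hom ima of P} identifies $\Hom_{H_n(0)}(\calV_\alpha,\calV_\beta)$ with $\{f\in\Hom_{H_n(0)}(\bfP_\alpha,\calV_\beta):f|_{\Omega(\calV_\alpha)}=0\}$. By Theorem~\ref{main thm for V}(a), the syzygy $\Omega(\calV_\alpha)={\rm Im}(\partial_1)$ is generated as an $H_n(0)$-module by the elements $T^{(i)}_\alpha=\pi_{[m_{i-1}+1,m_i]}\cdot T_\alpha$ $(i\in\calI(\alpha))$, so $f$ vanishes on $\Omega(\calV_\alpha)$ if and only if $f(T^{(i)}_\alpha)=\pi_{[m_{i-1}+1,m_i]}\cdot f(T_\alpha)=0$ for all $i\in\calI(\alpha)$. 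Expanding $f=\sum_{U\in\mathfrak B}c_U f_U$ in the basis from (a), this is precisely the condition $\sum_{U}c_U\,\pi_{[m_{i-1}+1,m_i]}\cdot f_U(T_\alpha)=0$ for all $i\in\calI(\alpha)$; since $f\mapsto(c_U)_U$ is a linear isomorphism $\Hom_{H_n(0)}(\bfP_\alpha,\calV_\beta)\xrightarrow{\ \sim\ }\C^{|\mathfrak B|}$, it carries $\Hom_{H_n(0)}(\calV_\alpha,\calV_\beta)$ bijectively onto the solution space displayed in the statement, which gives the claimed equality of dimensions.
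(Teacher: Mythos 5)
Your proof is correct, and for the key step in part (a) it genuinely diverges from the paper. Both arguments reduce (a) to the claim that $\Des(\mathsf{Lead}(f)) = \set(\alpha)$ for every nonzero $f \in \Hom_{H_n(0)}(\bfP_\alpha, \calV_\beta)$, together with the dimension count $\dim\Hom_{H_n(0)}(\bfP_\alpha,\calV_\beta) = L_{\beta,\alpha} = |\mathfrak B|$, and part (b) is handled identically (vanishing on the generators $\pi_{[m_{i-1}+1,m_i]}\cdot T_\alpha$ of $\Omega(\calV_\alpha)$). The difference is in \emph{how} the key claim is proved. The paper first derives the inclusion $\Des(\mathsf{Lead}(f)) \subseteq \set(\alpha)$ from the stabilization $\pi_i \cdot T_\alpha = T_\alpha$ for $i\notin\set(\alpha)$, and then establishes the reverse inclusion via an explicit combinatorial computation: assuming some $m_j\in\set(\alpha)\setminus\Des(\mathsf{Lead}(f))$, it exhibits a specific product of $\pi_{[a,b]^{\rm r}}$'s that annihilates $T_\alpha$ (because two consecutive row fillings get pushed into the first column) yet fixes $\mathsf{Lead}(f)$, a contradiction. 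Your argument instead passes to the quotient $Q = \calV_\beta/\Span_\C\{\calT \prec_L^{\rm r} \mathsf{Lead}(f)\}$, observes that $\bar{\mathsf{Lead}(f)}$ spans a copy of $\bfF_\gamma$ with $\set(\gamma) = \Des(\mathsf{Lead}(f))$, and then pins down $\gamma = \alpha$ by noting the induced map $\bfP_\alpha\to Q$ is nonzero with simple image, so that image must be the top $\bfF_\alpha$ of the local module $\bfP_\alpha$. Your version buys you a uniform, module-theoretic treatment of both inclusions at once and avoids having to guess the annihilating word of $\pi$-generators; the paper's version stays entirely inside the explicit $H_n(0)$-action on tableaux and avoids invoking the structure theory of projective indecomposables mid-argument. (Your packaging of existence/uniqueness of $f_U$ via injectivity of $f\mapsto(c_{f,U})_U$ plus the dimension count is also a mild simplification of the paper's filtration $H(U_1)\subseteq\cdots\subseteq H(U_l)$, but the two are substantively the same.)
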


\begin{proof}
(a)
Observe that every homomorphism in ${\rm Hom}_{H_n(0)}(\bfP_{\alpha},\calV_{\beta})$ is completely determined by the value at
the source tableau $T_\alpha$ of $\bfP_\alpha$.
We claim that $\Des({\mathsf {Lead}}(f)) ={\rm set}(\alpha)$ for all nonzero $f\in {\rm Hom}_{H_n(0)}(\bfP_{\alpha},\calV_{\beta})$.
To begin with, from the equalities $f(\pi_i \cdot T_\alpha)=f(T_\alpha)$ for all $i\notin \Des(T_\alpha)={\rm set}(\alpha)$, we see that
$f$ satisfies the condition that
$\Des({\mathsf {Lead}}(f)) \subseteq {\rm set}(\alpha)$.
Recall that we set $m_i:=\sum_{1\le k \le i}\alpha_i$ for all $1\le i \le \ell(\alpha)$ in Section~\ref{sec: extensions of V by F}.
Suppose that there is an index $j$ such that
\[m_j \in {\rm set}(\alpha) \setminus \Des({\mathsf {Lead}}(f)).\]
Then
\[m_{j-1}+1, m_{j-1}+2, \ldots, m_{j+1}-1 \in {\rm set}(\alpha) \setminus \Des({\mathsf {Lead}}(f)).\]
But, this is absurd since
\[
\pi_{[m_{j-1}+1, m_{j+1}-\alpha_j]^{\rm r}} \cdots \pi_{[m_j-1,m_{j+1}-2]^{\rm r}}\pi_{[m_j, m_{j+1}-1]^{\rm r}} \cdot T_\alpha = 0,
\]
whereas
\[
\pi_{[m_{j-1}+1, m_{j+1}-\alpha_j]^{\rm r}} \cdots \pi_{[m_j-1,m_{j+1}-2]^{\rm r}}\pi_{[m_j, m_{j+1}-1]^{\rm r}} \cdot {\mathsf {Lead}}(f) = {\mathsf {Lead}}(f).
\]
So the claim is verified.

For each $U \in \mathfrak B$, consider the $\C$-vector space
$$H(U):=\{f\in {\rm Hom}_{H_n(0)}(\bfP_{\alpha},\calV_{\beta}):
{\mathsf {Lead}}(f) \preccurlyeq_{L}^{\rm r} U\}.$$
Write $\mathfrak B$ as $\{U_1\preccurlyeq_{L}^{\rm r}U_{2} \preccurlyeq_{L}^{\rm r} \cdots \preccurlyeq_{L}^{\rm r} U_{l-1} \preccurlyeq_{L}^{\rm r} U_{l}\}$, where $l=|\mathfrak B|$.
For any $f,g \in H(U_i)$, it holds that
\[
c_{g,{\mathsf {Lead}}(g)}f- c_{f,{\mathsf {Lead}}(f)}g \in H(U_{i-1})
\]
with $H(U_0):=0$.
This implies that $\dim H(U_i)/H(U_{i-1}) \le 1$ for all $1\le i \le l$.

Combining these inequalities with the equality $\dim{\rm Hom}_{H_n(0)}\left(\bfP_{\alpha},\calV_{\beta} \right)=|\mathfrak B|$,
we deduce that, for each $U\in \mathfrak B$, there exists a unique $f_U\in {\rm Hom}_{H_n(0)}(\bfP_{\alpha},\calV_{\beta})$
with the desired property.

(b) By (a), one sees that $\{f_U: U\in \mathfrak B\}$ forms a basis for ${\rm Hom}_{H_n(0)}\left(\bfP_{\alpha},\calV_{\beta} \right)$.
Since ${\rm Hom}_{H_n(0)}(\calV_{\alpha},\calV_{\beta})$ is isomorphic to
the $\C$-vector space consisting of
$H_n(0)$-module homomorphisms from $\bfP_{\alpha}$ to $\calV_{\beta}$
which vanish on $\Omega(\calV_{\alpha})$, our assertion follows from
Lemma~\ref{lem: char of tab in T^i}, which says that $\{\pi_{[m_{i-1} + 1, m_i]} \cdot T_\alpha \,: \, i \in \calI(\alpha)\}$
is a generating set of $\Omega(\calV_{\alpha})$.
\end{proof}

\begin{example}
(a) Let $\alpha=(1,1,2,1)$ and $\beta=(1,2,2)$.
Then $\mathfrak B=\{
U:=\begin{array}{l}
\begin{ytableau}
1 & \none \\
2 & 4 \\
3 & 5
\end{ytableau}
\end{array}\}
$
and
\[f_U(T_\alpha)=\begin{array}{l}
\begin{ytableau}
1 & \none \\
2 & 4 \\
3 & 5
\end{ytableau}
\end{array}
-
\begin{array}{l}
\begin{ytableau}
1 & \none \\
2 & 5 \\
3 & 4
\end{ytableau} \,\,.
\end{array}\]
Note that $\calI(\alpha)=\{2\}$ and $m_1=1, m_2=2$.
Since $\pi_{2} \cdot f_U(T_\alpha)=0$,
it follows that
${\rm Hom}_{H_n(0)}(\calV_{\alpha},\calV_{\beta})$ is $1$-dimensional.

(b) Let $\alpha=(1,1,3,2)$ and $\beta=(2,3,2)$.
Then
\[\mathfrak B=\left\{
U_1:=\begin{array}{l}
\begin{ytableau}
1 & 5 & \none\\
2 & 4&7 \\
3 & 6&\none
\end{ytableau}
\end{array},
U_2:=\begin{array}{l}
\begin{ytableau}
1 & 7 & \none\\
2 & 4& 5 \\
3 & 6 &\none
\end{ytableau}
\end{array},
U_3:=\begin{array}{l}
\begin{ytableau}
1 & 5 & \none\\
2 & 6& 7 \\
3 & 4 &\none
\end{ytableau}
\end{array}\right\}
\]
and
$f_{U_i}(T_\alpha)=U_i$ for $i=1,2,3$.
Note that $\calI(\alpha)=\{2,3\}$ and $m_1=1, m_2=2, m_3 = 5$.
Since $\pi_{2} \cdot f_{U_i}(T_\alpha) =0$ for all $1\le i \le 3$ and
\[
\pi_{[3,5]} \cdot \left(c_{1}f_{U_1}(T_\alpha) + c_{2}f_{U_2}(T_\alpha)+c_{3}f_{U_3}(T_\alpha)\right)=
(c_1 + c_3) 
\begin{array}{l}
\begin{ytableau}
1 & 6 & \none\\
2 & 5& 7 \\
3 & 4 &\none
\end{ytableau}
\end{array}
+
c_2\begin{array}{l}
\begin{ytableau}
1 & 7 & \none\\
2 & 5& 6 \\
3 & 4 &\none
\end{ytableau}
\end{array},
\]
it follows that
${\rm Hom}_{H_n(0)}(\calV_{\alpha},\calV_{\beta})$ is $1$-dimensional.
\end{example}
We end up with an interesting consequence of Theorem~\ref{Thm: Main Section4},
where we successfully compute $\Ext_{H_n(0)}^1(\bfF_\beta,\calV_{\alpha})$
by constructing an injective hull of $\Omega^{-1}(\calV_\alpha)$.
To compute it in a different way,
let us consider a short exact sequence
\begin{equation*}
\begin{tikzcd}
0 \arrow[r] & {\rad}(\bfP_\beta) \arrow[r, "\iota"] & \bfP_\beta \arrow[r, "\mathrm{pr}"] & \bfF_\beta \arrow[r] & 0.
\end{tikzcd}
\end{equation*}
Here $\iota$ is the natural injection.
Then we have
\begin{equation}\label{compute Ext using projectives}
\Ext_{H_n(0)}^1(\bfF_\beta,\calV_{\alpha})
\cong \frac{{\rm Hom}_{H_n(0)}\left({\rad}(\bfP_\beta) ,\calV_{\alpha} \right)}
{{\rm Im}\, \iota^\ast},
\end{equation}
where $\iota^\ast:  {\rm Hom}_{H_n(0)}( \bfP_\beta ,\calV_{\alpha} )\ra {\rm Hom}_{H_n(0)}({\rad}(\bfP_\beta),\calV_{\alpha} ) $ is given by
composition by with $\iota$.
By \eqref{Computing Hom(M,N)}, one has that
\begin{align*}
\dim {\rm Im}\, \iota^\ast
&=\dim {\rm Hom}_{H_n(0)}\left(\bfP_\beta,\calV_{\alpha} \right)-\dim {\rm Hom}_{H_n(0)}\left(\bfF_\beta,\calV_{\alpha} \right)\\
&=[\calV_{\alpha}:\bfF_\beta]-[{\rm soc}(\calV_\alpha):\bfF_\beta]\\
&=L_{\alpha, \beta}-[[\balalp]:\beta^\rmr] \quad \text{(by Lemma \ref{F-expansion od dual immaculate} and Theorem~\ref{Thm:injective hull V})},
\end{align*}
where $[[\balalp]:\beta^\rmr]$ is the multiplicity of $\beta^\rmr \in [\balalp]$.
Comparing Theorem~\ref{Thm: Main Section4} with \eqref{compute Ext using projectives}
yields the following result.

\begin{corollary}
Let $\alpha, \beta$ be compositions of $n$. Then we have
\begin{align*}
\dim {\rm Hom}_{H_n(0)}\left({\rad}(\bfP_\beta) ,\calV_{\alpha} \right)
=L_{\alpha, \beta}-[[\balalp]:\beta^\rmr]+  [\calL(\alpha):\beta^\rmr].
\end{align*}
\end{corollary}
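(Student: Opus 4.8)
The plan is to read off the formula by equating the two different computations of $\dim\Ext_{H_n(0)}^1(\bfF_\beta,\calV_\alpha)$ that are now available: the one coming from the minimal injective presentation of $\calV_\alpha$ (Theorem~\ref{Thm: Main Section4}(d)) and the one coming from the projective side, i.e. from the short exact sequence $0\to\rad(\bfP_\beta)\to\bfP_\beta\to\bfF_\beta\to 0$ as recorded in \eqref{compute Ext using projectives}. Concretely, \eqref{compute Ext using projectives} gives $\Ext_{H_n(0)}^1(\bfF_\beta,\calV_\alpha)\cong\Hom_{H_n(0)}(\rad(\bfP_\beta),\calV_\alpha)/\rmIm\,\iota^\ast$, so taking dimensions yields
\[
\dim\Hom_{H_n(0)}(\rad(\bfP_\beta),\calV_\alpha)
=\dim\rmIm\,\iota^\ast+\dim\Ext_{H_n(0)}^1(\bfF_\beta,\calV_\alpha),
\]
and it remains only to insert the known values of the two summands on the right.

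For $\dim\rmIm\,\iota^\ast$ I would simply invoke the computation displayed immediately before the corollary: applying \eqref{Computing Hom(M,N)} to the short exact sequence gives $\dim\rmIm\,\iota^\ast=\dim\Hom_{H_n(0)}(\bfP_\beta,\calV_\alpha)-\dim\Hom_{H_n(0)}(\bfF_\beta,\calV_\alpha)$; here $\dim\Hom_{H_n(0)}(\bfP_\beta,\calV_\alpha)=[\calV_\alpha:\bfF_\beta]=L_{\alpha,\beta}$ by Lemma~\ref{dimension of Hom} together with Lemma~\ref{F-expansion od dual immaculate}, while $\dim\Hom_{H_n(0)}(\bfF_\beta,\calV_\alpha)=[\soc(\calV_\alpha):\bfF_\beta]$, and by Theorem~\ref{Thm:injective hull V} the map $\injhull$ exhibits $\calV_\alpha$ as an essential submodule of $\bfP_{\balalp}$, so $\soc(\calV_\alpha)=\soc(\bfP_{\balalp})$; combined with Theorem~\ref{thm: bfP isom to calP} and $\soc(\bfP_\gamma)\cong\bfF_{\gamma^\rmr}$ this gives $[\soc(\calV_\alpha):\bfF_\beta]=[[\balalp]:\beta^\rmr]$, whence $\dim\rmIm\,\iota^\ast=L_{\alpha,\beta}-[[\balalp]:\beta^\rmr]$. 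For the second summand, Theorem~\ref{Thm: Main Section4}(d) gives directly $\dim\Ext_{H_n(0)}^1(\bfF_\beta,\calV_\alpha)=[\calL(\alpha):\beta^\rmr]$, with the convention that this multiplicity is $0$ when $\beta^\rmr\notin\calL(\alpha)$. Substituting both values into the displayed identity produces the asserted formula.

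There is no genuine obstacle here: all the substantive content is already contained in Theorems~\ref{Thm:injective hull V} and~\ref{Thm: Main Section4}, and the argument is pure bookkeeping with dimensions of Hom- and Ext-spaces. The only points that require a moment of care are (i) making sure that $[\balalp]$ and $\calL(\alpha)$ are consistently treated as multisets so that the multiplicities $[[\balalp]:\beta^\rmr]$ and $[\calL(\alpha):\beta^\rmr]$ are well defined, and (ii) the identification $\soc(\bfP_\gamma)\cong\bfF_{\gamma^\rmr}$, which uses that $H_n(0)$ is a Frobenius algebra and that passing to the socle of a projective indecomposable reverses the indexing composition. Once these are in place, the corollary follows by reading off the two displayed equalities and adding them.
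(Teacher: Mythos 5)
Your argument is exactly the paper's own: take dimensions in \eqref{compute Ext using projectives} to get $\dim\Hom_{H_n(0)}(\rad(\bfP_\beta),\calV_\alpha)=\dim\rmIm\,\iota^\ast+\dim\Ext^1_{H_n(0)}(\bfF_\beta,\calV_\alpha)$, substitute $\dim\rmIm\,\iota^\ast=L_{\alpha,\beta}-[[\balalp]:\beta^\rmr]$ from the displayed calculation preceding the corollary, and substitute $\dim\Ext^1_{H_n(0)}(\bfF_\beta,\calV_\alpha)=[\calL(\alpha):\beta^\rmr]$ from Theorem~\ref{Thm: Main Section4}(d). Your justifications for the two input quantities (essential extension gives $\soc(\calV_\alpha)\cong\soc(\bfP_{\balalp})$, and $\soc(\bfP_\gamma)\cong\bfF_{\gamma^\rmr}$) are the same facts the paper invokes implicitly, so the proposal is correct and coincides with the paper's proof.
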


\section{Proof of Theorems}
\label{Sec: Proof of Theorems}

\subsection{Proof of Theorem \protect\ref{main thm for V}}\label{subsec: proof of proj. presentation}

We first prove that $\Omega(\calV_\alpha)$ is generated by $\{T^{(i)}_{\alpha} \mid i \in \calI(\alpha)\}$.
By the definition of $\projco$, one can easily derive that
$$
\Omega(\calV_\alpha) = \C \{T \in \SRT(\alpha) \mid T_p^1 > T_{p+1}^1 \text{ for some } 1\leq p < \ell(\alpha)\}.
$$

Given $\sigma \in \SG_n$, let
\[
\Des_L(\sigma):= \{i \in [n-1] \mid \ell(s_i \sigma) < \ell(\sigma)\}
\ \  \text{and} \ \
\Des_R(\sigma):= \{i \in [n-1] \mid \ell(\sigma s_i) < \ell(\sigma)\}.
\]
The \emph{left weak Bruhat order} $\preceq_L$ on $\SG_n$ is the partial order on $\SG_n$ whose covering relation $\preceq_L^c$ is defined as follows:
$\sigma \preceq_L^c s_i \sigma$ if and only if $i \notin \Des_L(\sigma)$.
It should be remarked that a word of length $n$ can be confused with a permutation in $\SG_n$ if each of $1,2,\ldots, n$ appears in it exactly once.

The following lemma plays a key role in proving Lemma~\ref{lem: char of tab in T^i}.

\begin{lemma}{\rm (\cite[Proposition 3.1.2 (vi)]{05BB})}\label{lem: left order and descent}
Suppose that $i \in \Des_R(\sigma) \cap \Des_R(\rho)$.
Then, $\sigma \preceq_L \rho$ if and only if $\sigma s_i \preceq_L \rho s_i$.
\end{lemma}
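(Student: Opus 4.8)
The plan is to reduce the statement to the standard description of the left weak order: for $u,v\in\SG_n$, one has $u\preceq_L v$ if and only if $v=wu$ for some $w\in\SG_n$ with $\ell(v)=\ell(w)+\ell(u)$; equivalently, some reduced expression for $u$ occurs as a suffix of a reduced expression for $v$. This characterization follows routinely from the covering-relation definition of $\preceq_L$ together with the exchange condition (and is in any case recorded in \cite{05BB}), so I would state it up front and then argue purely by length bookkeeping.

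First note that $i\in\Des_R(\sigma)$ means precisely $\ell(\sigma s_i)=\ell(\sigma)-1$, and likewise $\ell(\rho s_i)=\ell(\rho)-1$. Suppose $\sigma\preceq_L\rho$, so that $\rho=w\sigma$ with $\ell(\rho)=\ell(w)+\ell(\sigma)$ for some $w\in\SG_n$. Right-multiplying by $s_i$ gives $\rho s_i=w(\sigma s_i)$, and
\[
\ell(\rho s_i)=\ell(\rho)-1=\ell(w)+\ell(\sigma)-1=\ell(w)+\ell(\sigma s_i),
\]
so $\sigma s_i\preceq_L\rho s_i$. Conversely, suppose $\sigma s_i\preceq_L\rho s_i$, say $\rho s_i=w(\sigma s_i)$ with $\ell(\rho s_i)=\ell(w)+\ell(\sigma s_i)$. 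Right-multiplying by $s_i$ gives $\rho=w\sigma$, and
\[
\ell(\rho)=\ell(\rho s_i)+1=\ell(w)+\ell(\sigma s_i)+1=\ell(w)+\ell(\sigma),
\]
so $\sigma\preceq_L\rho$. In each direction the hypotheses $i\in\Des_R(\sigma)$ and $i\in\Des_R(\rho)$ are each used exactly once, to trade a length $\ell(x)$ for $\ell(xs_i)=\ell(x)-1$.

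There is no serious obstacle here; the only thing to watch is the bookkeeping of conventions. Because $\preceq_L$ is generated by left multiplications while $s_i$ acts here by right multiplication, the two operations commute and the witnessing element $w$ is left untouched — this is precisely what makes the two displayed computations go through verbatim. If one prefers to avoid invoking the factorization description of $\preceq_L$, an equivalent self-contained route is to apply the anti-automorphism $x\mapsto x^{-1}$, which turns $\preceq_L$ into the right weak order and $\Des_R$ into $\Des_L$; then $\sigma^{-1}$ and $\rho^{-1}$ each admit a reduced expression beginning with $s_i$, and one deletes (respectively prepends) that first letter and compares reduced words directly, with the same length count as above.
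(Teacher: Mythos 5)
Your proof is correct. Note that the paper does not actually prove this lemma—it is stated as a citation to Björner--Brenti \cite[Proposition 3.1.2 (vi)]{05BB}—so there is no "paper's proof" to compare against; you have supplied a proof where the authors chose to cite. Your argument is the standard one: it invokes the factorization characterization of the left weak order ($u \preceq_L v$ iff $v = wu$ with $\ell(v) = \ell(w) + \ell(u)$), which follows from the covering-relation definition the paper gives, and then both directions are a one-line length count using $\ell(\sigma s_i) = \ell(\sigma) - 1$ and $\ell(\rho s_i) = \ell(\rho) - 1$. The key observation that right multiplication by $s_i$ leaves the witnessing left factor $w$ untouched is exactly what makes the computation symmetric in the two directions. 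The alternative route you sketch via $x \mapsto x^{-1}$ is also valid and is essentially how Björner--Brenti organize their Chapter 3 material, so either presentation would be acceptable as a self-contained replacement for the citation.
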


\begin{lemma}\label{lem: char of tab in T^i}
For each $i \in \calI(\alpha)$, $H_n(0) \cdot T^{(i)}_{\alpha} = \C\{T \in \SRT(\alpha) \mid T_i^1 > T_{i+1}^1 \}$.
Thus, $\Omega(\calV_\alpha) = \sum_{i\in \calI(\alpha)} H_n(0) \cdot T^{(i)}_{\alpha}$.
\end{lemma}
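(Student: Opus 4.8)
The plan is to establish the identity $H_n(0)\cdot T^{(i)}_\alpha=\C\{T\in\SRT(\alpha)\mid T_i^1>T_{i+1}^1\}$ for each $i\in\calI(\alpha)$; the closing assertion of the lemma is then immediate. Indeed, if $\alpha_{p+1}=1$ then the unique box of column $p+1$ of $\trd(\alpha)$ lies in the same row as, and just to the right of, the top box of column $p$, so $T_p^1<T_{p+1}^1$ for every $T\in\SRT(\alpha)$; thus $\{T:T_p^1>T_{p+1}^1\}$ is empty unless $p\in\calI(\alpha)$. Since $\Omega(\calV_\alpha)=\C\{T\in\SRT(\alpha)\mid T_p^1>T_{p+1}^1\text{ for some }1\le p<\ell(\alpha)\}$ by the description of $\projco$ recalled above, and this is the $\C$-span of the union of the sets $\{T:T_i^1>T_{i+1}^1\}$ with $i\in\calI(\alpha)$, we would obtain $\Omega(\calV_\alpha)=\sum_{i\in\calI(\alpha)}\C\{T:T_i^1>T_{i+1}^1\}=\sum_{i\in\calI(\alpha)}H_n(0)\cdot T^{(i)}_\alpha$.

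For the inclusion ``$\subseteq$'' I would first check that $M_i:=\C\{T\in\SRT(\alpha)\mid T_i^1>T_{i+1}^1\}$ is an $H_n(0)$-submodule of $\bfP_\alpha$. As $\pi_j$ carries each basis tableau $T$ to $T$, to $0$, or to $s_j\cdot T\ge T$, it suffices to see that $\{T:T_i^1>T_{i+1}^1\}$ is an order filter for $\le$. A brief case analysis shows that a covering move $T\lessdot s_j\cdot T$ can disturb the relative order of the top entries of columns $i$ and $i+1$ only when $\{j,j+1\}$ meets $\{T_i^1,T_{i+1}^1\}$, and in every such case $T_i^1>T_{i+1}^1$ is preserved, the key point being that $\alpha_{i+1}\ge 2$ forces the top box of column $i+1$ of $\trd(\alpha)$ to lie strictly above the top box of column $i$. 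A direct computation of $T^{(i)}_\alpha=\pi_{[m_{i-1}+1,\,m_i]}\cdot T_\alpha$ then shows that it coincides with $T_\alpha$ outside columns $i$ and $i+1$, that its $i$-th column reads $m_{i-1}+2,\dots,m_i+1$ from top to bottom, and that its $(i+1)$-st column reads $m_{i-1}+1$ followed by $m_i+2,\dots,m_{i+1}$. In particular $(T^{(i)}_\alpha)_i^1=m_{i-1}+2>m_{i-1}+1=(T^{(i)}_\alpha)_{i+1}^1$, so $T^{(i)}_\alpha\in M_i$, whence $H_n(0)\cdot T^{(i)}_\alpha\subseteq M_i$.

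For the reverse inclusion it is enough to prove that $T^{(i)}_\alpha$ is the \emph{unique} $\le$-minimal element of $\{T\in\SRT(\alpha):T_i^1>T_{i+1}^1\}$: once this is known, the filter property forces $\{T:T_i^1>T_{i+1}^1\}=\{T:T^{(i)}_\alpha\le T\}$, and the right-hand set is exactly the $\C$-basis of $H_n(0)\cdot T^{(i)}_\alpha$. That $T^{(i)}_\alpha$ is itself $\le$-minimal in $M_i$ follows from its explicit form: its only descending move is by $s_{m_{i-1}+1}$, which swaps the top entries of columns $i$ and $i+1$ and hence leaves $M_i$, while for every other index $j$ the entries $j,j+1$ of $T^{(i)}_\alpha$ either sit in a single column (so the transposition is not a legal $\SRT$) or lie in one row (so $\pi_j$ annihilates them). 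For the uniqueness I would pass to the permutation picture: the reading word $T\mapsto\bfw(T)$ is a bijection of $\SRT(\alpha)$ onto the descent class $\mathcal{D}:=\{\sigma\in\SG_n:\Des_R(\sigma)=\Des_R(\bfw(T_\alpha))\}$ — every reading word has the same descent set, the one read off from the row lengths of $\trd(\alpha)$, since $\bfw(T)$ increases along each row and drops at each row break — it sends $T_\alpha$ to the $\preceq_L$-minimum of $\mathcal{D}$, and it intertwines each legal $\pi_j$-move with left multiplication by $s_j$, so it identifies $(\SRT(\alpha),\le)$ with $\mathcal{D}$ under the order generated by those $\preceq_L$-covers that stay inside $\mathcal{D}$. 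Under this identification $\bfw(T^{(i)}_\alpha)$ is obtained from $\bfw(T_\alpha)$ by left multiplication with $s_{m_{i-1}+1}s_{m_{i-1}+2}\cdots s_{m_i}$, and the condition $T_i^1>T_{i+1}^1$ translates into the statement that a fixed pair of positions $P<Q$ of $\bfw(T)$ — namely those at which the tops of columns $i$ and $i+1$ are read, which depend only on the shape — is an inversion of $\bfw(T)$. The task is thereby reduced to showing that among all $\sigma\in\mathcal{D}$ with $\sigma(P)>\sigma(Q)$ there is a $\preceq_L$-smallest one, equal to $\bfw(T^{(i)}_\alpha)$; this is where Lemma~\ref{lem: left order and descent} enters, used repeatedly to strip off common right descents and to push a reduced word of such a $\sigma$ down to $\bfw(T^{(i)}_\alpha)$ while never leaving $\mathcal{D}$.

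The hard part will be exactly this last, purely Coxeter-combinatorial, step. Two subtleties must be handled: pinning down precisely the two reading positions $P$ and $Q$ attached to the shape, and controlling the interaction between the left weak order and the descent class $\mathcal{D}$ — since $\mathcal{D}$ is not $\preceq_L$-convex, the chains witnessing $\bfw(T^{(i)}_\alpha)\preceq_L\bfw(T)$ must be routed so as never to leave $\mathcal{D}$, which is what Lemma~\ref{lem: left order and descent}, applied together with the fixed descent set of reading words, makes possible. Everything else in the argument — the filter case analysis, the explicit form of $T^{(i)}_\alpha$ and of its unique descending move, and the bookkeeping giving the closing identity for $\Omega(\calV_\alpha)$ — is routine.
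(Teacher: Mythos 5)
Your outline of the first inclusion (that $M_i:=\C\{T\in\SRT(\alpha)\mid T_i^1>T_{i+1}^1\}$ is an order filter, hence a submodule, and that $T^{(i)}_\alpha\in M_i$) matches the paper's argument in substance. The logical reduction of the reverse inclusion to showing that $T^{(i)}_\alpha$ is the unique $\le$-minimum of $M_i$ is also valid, and your framework for the reverse inclusion --- reading words, the descent class $\mathcal{D}$, the left weak order, and Lemma~\ref{lem: left order and descent} --- is the same one the paper uses, relying on the criterion from Huang (\cite[Theorem~3.3]{16Huang}) that $T'\le T$ if and only if $\bfw(T')\preceq_L\bfw(T)$. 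However, the proposal stops at exactly the point where the paper's actual idea enters, and the sketch you give for that step does not reproduce it.

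The gap is in showing $\bfw(T^{(i)}_\alpha)\preceq_L\bfw(T)$ for every $T\in M_i$. You propose to use Lemma~\ref{lem: left order and descent} ``repeatedly to strip off common right descents and to push a reduced word of such a $\sigma$ down to $\bfw(T^{(i)}_\alpha)$,'' and you flag as a subtlety that such chains must ``never leave $\mathcal{D}$.'' Neither remark pins down a working mechanism: after multiplying on the right by a simple transposition the two words move to a \emph{different} descent class, so there is no canonical stock of common right descents to peel off, and nothing in the sketch forces the process to terminate at $\bfw(T^{(i)}_\alpha)$. (The second worry, about staying inside $\mathcal{D}$, is in fact a non-issue once Huang's criterion is invoked at full strength: one only needs a plain $\preceq_L$-comparison of reading words, not a chain routed through $\mathcal{D}$.) The paper resolves the comparison by a specific construction that your proposal does not contain: to each $T\in\SRT(\alpha)^{(i)}$ it attaches a tableau $\tau_T\in\SRT(\bal^{(i)})$ (move the topmost entry of column $i+1$ to the top of column $i$), observes that $\tau_{T^{(i)}_\alpha}=T_{\bal^{(i)}}$ is the \emph{source} tableau of $\bfP_{\bal^{(i)}}$, so that $\bfw(\tau_{T^{(i)}_\alpha})\preceq_L\bfw(\tau_T)$ holds for free, and then uses the identity $\bfw(\tau_T)=\bfw(T)\,s_{m_{i+1}-1}\cdots s_{m_i}$ together with the shared right descents recorded in~\eqref{eq: sj in Des} to transport this $\preceq_L$-relation back to $\bfw(T^{(i)}_\alpha)\preceq_L\bfw(T)$ by repeated application of Lemma~\ref{lem: left order and descent}. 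Without the auxiliary tableaux $\tau_T$ and the source-tableau observation, there is no clear starting point for the Coxeter-theoretic descent, and so the hard part of the lemma remains unproven in your proposal.
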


\begin{proof}
For simplicity, let $\SRT(\alpha)^{(i)}$ be the set of $\SRT$x of shape $\alpha$ such that
the topmost entry in the $i$th column is greater than
that in the $(i+1)$st column.

We first show that $H_n(0) \cdot T^{(i)}_{\alpha}$ is included in the $\mathbb C$-span of $\SRT(\alpha)^{(i)}$, equivalently
$\pi_\sigma \cdot T^{(i)}_{\alpha} \in \SRT(\alpha)^{(i)} \cup \{0\}$ for all $\sigma \in \SG_n$.
Suppose that there exists $\sigma \in \SG_n$ such that $\pi_\sigma \cdot T^{(i)}_{\alpha} \neq 0$ and $\pi_\sigma \cdot T^{(i)}_{\alpha} \notin \SRT(\alpha)^{(i)}$.
Let $\sigma_0$ be such a permutation with minimal length and $j$ a left descent of $\sigma_0$.
By the minimality of $\sigma_0$, we have $\pi_{s_j \sigma_0} \cdot T^{(i)}_{\alpha} \in \SRT(\alpha)^{(i)}$,
and therefore
\[(\pi_{s_j \sigma_0} \cdot T^{(i)}_{\alpha})^1_{i} > (\pi_{s_j \sigma_0} \cdot T^{(i)}_{\alpha})^1_{i+1}.\]
By the definition of the $\pi_j$-action on $\SRT(\alpha)$, we have
\[( \pi_j \cdot (\pi_{s_j \sigma_0} \cdot T^{(i)}_{\alpha}) )^1_i > ( \pi_j \cdot (\pi_{s_j \sigma_0} \cdot T^{(i)}_{\alpha}) )^1_{i+1}.\]
However, since $\pi_j \cdot (\pi_{s_j \sigma_0} \cdot T^{(i)}_{\alpha}) = \pi_{\sigma_0} \cdot T^{(i)}_{\alpha}$,
this contradicts the assumption that $\pi_{\sigma_0} \cdot T^{(i)}_{\alpha} \notin \SRT(\alpha)^{(i)}$.

We next show the opposite inclusion $\SRT(\alpha)^{(i)} \subseteq H_n(0) \cdot T^{(i)}_{\alpha}$.
Our strategy is to use \cite[Theorem 3.3]{16Huang}, which implicitly says that for $T_1, T_2 \in \SRT(\alpha)$,
$T_2 \in H_n(0) \cdot T_1$ if and only if $\bfw(T_1) \preceq_L \bfw(T_2)$.
Here $\bfw(T_i) \,(i=1,2)$ denotes the word obtained from $T_i$ by reading the entries from left to right starting with the bottom row.
For each $T \in \SRT(\alpha)^{(i)}$, we define $\tau_T$ to be the filling of $\trd(\bal^{(i)})$ whose entries in each column are increasing from top to bottom and whose columns are given as follows: for $1 \le p \le \ell(\alpha)$,
\begin{align}\label{eq: tau_T}
(\tau_T)_p^\bullet =
\begin{cases}
T_i^\bullet \cup \{T_{i+1}^1\} & \text{if $p = i$,}\\
T_{i+1}^\bullet \setminus \{ T_{i+1}^1 \}   & \text{if $p = i+1$,}\\
T_p^\bullet & \text{otherwise.}\\
\end{cases}
\end{align}
The inequality $(\tau_T)_i^1 < (\tau_T)_{i+1}^{-1}$ shows that $\tau_T \in \SRT(\bal^{(i)})$.
Combining
\begin{equation*}\label{eq: col and w}
\begin{aligned}
\bfw(\tau_T) = \bfw(T) s_{m_{i+1} - 1} s_{m_{i+1} - 2} \cdots s_{m_i}
\end{aligned}
\end{equation*}
with $\tau_{T^{(i)}_{\alpha}} = T_{\bal^{(i)}}$ (=the source tableau of $\bfP_{\bal^{(i)}}$)
yields that $\bfw(\tau_{T^{(i)}_{\alpha}}) \preceq_L \bfw(\tau_T)$ for $T \in \SRT(\alpha)^{(i)}$.
Moreover, for each $m_i \le j < m_{i+1}$, it holds that
\begin{equation}\label{eq: sj in Des}
s_j \in \Des_R(\bfw(\tau_{T^{(i)}_{\alpha}}) s_{m_i} s_{m_i + 1} \cdots s_{j-1})\cap \Des_R(\bfw(\tau_T) s_{m_i} s_{m_i + 1} \cdots s_{j-1}).
\end{equation}
Here $s_{m_i} s_{m_i + 1} \cdots s_{j - 1}$ is regarded as the identity when $j = m_i$.
Finally, applying Lemma~\ref{lem: left order and descent} to ~\eqref{eq: sj in Des}
yields that $\bfw(T^{(i)}_{\alpha}) \preceq_L \bfw(T)$, as required.
\end{proof}

Combining Lemma \ref{lem: char of tab in T^i} with the equalities $L(\tau)_i^1 = \tau_i^2$ and $L(\tau)_{i+1}^1 = \min(\tau_i^1, \tau_{i+1}^1)$, we derive that $\partial^{(i)}_1$ is well-defined.

\begin{lemma}\label{lem: phi_i is epic}
For $i \in \calI(\alpha)$, $\partial_1^{(i)}: \bfP_{\bal^{(i)}} \ra H_n(0) \cdot T^{(i)}_{\alpha}$ is a surjective $H_n(0)$-module homomorphism.
\end{lemma}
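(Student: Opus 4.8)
The statement asks for two things: that $\partial_1^{(i)}$ is an $H_n(0)$-module homomorphism, and that it is surjective. The plan is to dispose of surjectivity quickly and then establish the module-homomorphism property by a direct analysis of the $\pi_j$-action. For surjectivity I would reuse the assignment $T \mapsto \tau_T$ introduced in the proof of Lemma~\ref{lem: char of tab in T^i} (see~\eqref{eq: tau_T}): comparing~\eqref{eq: tau_T} with~\eqref{eq: def of L(tau)}, and noting that $T_i^1 > T_{i+1}^1$ forces $(\tau_T)_i^1 = T_{i+1}^1$, one checks at once that $L(\tau_T) = T$, whence $\partial_1^{(i)}(\tau_T) = T$, for every $T \in \SRT(\alpha)$ with $T_i^1 > T_{i+1}^1$. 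Since $\partial_1^{(i)}$ already maps into $H_n(0) \cdot T^{(i)}_\alpha = \C\{T \in \SRT(\alpha) \mid T_i^1 > T_{i+1}^1\}$ by well-definedness (recorded just before the lemma) together with Lemma~\ref{lem: char of tab in T^i}, this shows that every basis tableau of the codomain is hit, so $\partial_1^{(i)}$ is onto.

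The substantive part is to verify $\partial_1^{(i)}(\pi_j \cdot \tau) = \pi_j \cdot \partial_1^{(i)}(\tau)$ for all $\tau \in \SRT(\bal^{(i)})$ and all $1 \le j \le n-1$. Write $a := \tau_i^1$ for the entry that $L$ transports. The organizing observation is that whenever $L(\tau)$ is a genuine element of $\SRT(\alpha)$, every entry other than $a$ occupies a box whose vertical position relative to its neighbours (in the same row, or strictly above, or strictly below) is the same in $\tau$ and in $L(\tau)$, while $a$ is moved out of the top box of column $i$ into column $i+1$, at the unique slot keeping that column increasing, the shapes of columns $i$ and $i+1$ being adjusted to compensate. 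Hence for every $j$ such that neither of $j, j+1$ lies in this affected region, the three clauses of~\eqref{eq: action for ribbon} apply in exactly the same way to $\tau$ and to $L(\tau)$, and the identity is immediate. What remains is the finite list of configurations in which $j$ or $j+1$ equals $a$, equals the entry $\tau_i^2$ that becomes the new top of column $i$, or is one of the entries of column $i+1$ flanking the slot receiving $a$; in each of these I would fix the local picture using the column inequalities $\tau_i^1 < \tau_i^2 < \cdots$ and $\tau_{i+1}^1 < \cdots < \tau_{i+1}^{-1}$, the row inequality $\tau_i^1 < \tau_{i+1}^{-1}$ holding in $\SRT(\bal^{(i)})$, and, when $L(\tau) \ne 0$, the row inequalities defining $\SRT(\alpha)$, and then read off the claimed equality.

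I expect the main obstacle to be the borderline cases in which applying $\pi_j$ alters whether $L$ yields a valid tableau at all, or shifts the slot into which $a$ is inserted in column $i+1$: there one must confirm that $\partial_1^{(i)}(\pi_j \cdot \tau)$ and $\pi_j \cdot \partial_1^{(i)}(\tau)$ still coincide even when one or both of $L(\tau)$, $L(\pi_j \cdot \tau)$ vanish. Concretely, this reduces to showing that the property ``$a$ can be slid from the top of column $i$ into column $i+1$ without destroying row-increase'' behaves predictably under each admissible $\pi_j$-move, which is precisely what the inequalities above encode; once that point is settled, the residual verifications are routine. Combining this with the surjectivity argument and the well-definedness already recorded completes the proof of the lemma.
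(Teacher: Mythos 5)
Your surjectivity argument is the same as the paper's: comparing \eqref{eq: tau_T} with \eqref{eq: def of L(tau)} gives $L(\tau_T)=T$, so every basis tableau of $H_n(0)\cdot T^{(i)}_\alpha$ (characterised by Lemma~\ref{lem: char of tab in T^i}) is hit. Your plan for the homomorphism property --- a case analysis depending on the positions of $j,j+1$ relative to the boxes moved by $L$ --- is the same kind of computation the paper carries out; the paper merely organises the cases differently, according to which of the three clauses of \eqref{eq: action for ribbon} applies to $\tau$.

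However, the ``affected region'' you propose is too small, and if followed literally the sketch would miss cases. The point you are not accounting for is that $\trd(\alpha)$ is connected at the column-$(i{+}1)/(i{+}2)$ boundary while $\trd(\bal^{(i)})$ is not. As a result, the top of column $i{+}1$ in $L(\tau)$, namely $\min(\tau_i^1,\tau_{i+1}^1)$ (which may be $\tau_{i+1}^1$ itself, far from the slot receiving $a=\tau_i^1$), lands in the same row as $L(\tau)_{i+2}^{-1}=\tau_{i+2}^{-1}$, and similarly $L(\tau)_i^1=\tau_i^2$ lands in the same row as $L(\tau)_{i+1}^{-1}=\tau_{i+1}^{-1}$. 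Thus $\tau_{i+1}^1$, $\tau_{i+1}^{-1}$ and $\tau_{i+2}^{-1}$ must also be treated as affected entries: these are precisely the boundary configurations the paper's Case~3 singles out (where $k,k+1$ come to lie in the same row of $L(\tau)$, forcing $\pi_k\cdot L(\tau)=0$, and one then verifies $L(\pi_k\cdot\tau)\notin\SRT(\alpha)$). They are also exactly the entries that govern whether $L(\tau)\in\SRT(\alpha)$ at all, via the inequalities $\tau_i^2<\tau_{i+1}^{-1}$ and $\min(\tau_i^1,\tau_{i+1}^1)<\tau_{i+2}^{-1}$. Once these are added to your list, the remaining verifications proceed as you describe.
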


\begin{proof}
For each $T \in H_n(0) \cdot T^{(i)}_{\alpha}$,
let $\tau_T$ be the filling of $\trd(\bal^{(i)})$ defined in~\eqref{eq: tau_T}.
The surjectivity of $\partial_1^{(i)}$ is straightforward since
$\tau_T \in \SRT(\bal^{(i)})$ and $L(\tau_T)=T$.
Thus, to prove our assertion, it suffices to show that
\[
\partial_1^{(i)}(\pi_k \cdot \tau) = \pi_k \cdot \partial_1^{(i)}(\tau)
\]
for all $k = 1,2, \ldots, n-1$ and $\tau \in \SRT(\bal^{(i)})$.

{\it Case 1: $\pi_k \cdot \tau = \tau$.}
If $\partial_1^{(i)}(\tau) = 0$, then there is nothing to prove.
Suppose that $\partial_1^{(i)}(\tau) \neq 0$, that is, $L(\tau) \in \SRT(\alpha)$.
We claim that $k \notin \Des(L(\tau))$.
If $k = \tau_i^1$ and $k+1 = \tau_i^2$, then $k \in L(\tau)_{i+1}^\bullet$ and $k+1 \in L(\tau)_i^\bullet$. 
If $k \in \tau_{i+1}^\bullet$ and $k+1 = \tau_i^1$, then both $k$ and $k+1$ are in $L(\tau)_{i+1}^\bullet$.
In the remaining cases, from the fact that $k$ is weakly right of $k+1$ in $\tau$ it follows that $k$ is weakly right of $k+1$ in $L(\tau)$.
For any cases we can see that $k \notin \Des(L(\tau))$.

{\it Case 2: $\pi_k \cdot \tau = 0$.}
If $\partial_1^{(i)}(\tau) = 0$, then there is nothing to prove.
Suppose that $\partial_1^{(i)}(\tau) \neq 0$.
Since $k$ and $k+1$ are in the same row of $\tau$,
$k$ is the top and $k+1$ is the bottom for some two consecutive columns of $\tau$.
If $k \neq \tau_i^1$, then $k$ and $k+1$ are still in the same row of $L(\tau)$,
so $\pi_k \cdot L(\tau) = \pi_k \cdot \partial_1^{(i)}(\tau) = 0$, as required.
Assume that $k = \tau_i^1$.
Note that $|\tau_i^\bullet| = \alpha_i +1 \geq 2$ and $\tau_i^2$ greater than both $k$ and $k+1$.
By the definition of $L(\tau)$, we have that $L(\tau)_i^1 = \tau_i^2 > L(\tau)_{i+1}^{-1} = k+1$.
This implies that $\partial_1^{(i)}(\tau)=0$, which contradicts to our assumption $\partial_1^{(i)}(\tau) \neq 0$.

{\it Case 3: $\pi_k \cdot \tau = s_k \cdot \tau$.}
First, consider the case where $\partial_1^{(i)}(\tau) = 0$, that is, $L(\tau) \notin \SRT(\alpha)$.
Then $\tau$ must satisfy either $\tau_i^2 > \tau_{i+1}^{-1}$
or $\min(\tau_i^1, \tau_{i+1}^1) > \tau_{i+2}^{-1}$.
Thus, in order to $L(\pi_k \cdot \tau) \in \SRT(\alpha)$,
either $\tau_i^2 = k+1$ and  $\tau_{i+1}^{-1}=k$ or
$\min(\tau_i^1, \tau_{i+1}^1)= k+1$ and $\tau_{i+2}^{-1}=k$.
However, these are absurd because $k$ is strictly left of $k+1$ in $\tau$.

Next, consider the case where $\partial_1^{(i)}(\tau) \neq 0$, that is, $L(\tau) \in \SRT(\alpha)$.
Since $\pi_k \cdot \tau = s_k \cdot \tau$, $k$ is strictly left of $k+1$ in $\tau$.
Therefore, $k$ is weakly left of $k+1$ in $L(\tau)$ by the definition of $L(\tau)$.
Hence if neither $k$ and $k+1$ are in the same column in $L(\tau)$
nor they are in the same row in $L(\tau)$,
then $\pi_k \cdot L(\tau) = s_k \cdot L(\tau)$.
Therefore, in such case, we have that
\[
\pi_k \cdot \partial_1^{(i)}(\tau) = \pi_k \cdot L(\tau) = s_k \cdot L(\tau) = L(s_k \cdot \tau) = L(\pi_k \cdot \tau)= \partial_1^{(i)}(\pi_k \cdot \tau) \, .
\]

Suppose that $k$ and $k+1$ are in the same column in $L(\tau)$.
This is possible only the case where $k = \tau_i^1$ and $k+1 \in \tau_{i+1}^\bullet$
since $k$ is strictly left of $k+1$ in $\tau$.
Moreover, $k+1 \neq \tau_{i+1}^{-1}$ since $\pi_k \cdot \tau = s_k \cdot \tau$.
Hence $k+1 = (\pi_k \cdot \tau)_{i}^1$ and $k \in (\pi_k \cdot \tau)_{i+1}^\bullet$,
which implies that $L(\tau) = L(\pi_k \cdot \tau)$.
Therefore, we have
\[
\pi_k \cdot \partial_1^{(i)}(\tau) = \pi_k \cdot L(\tau) = L(\tau) = L(\pi_k \cdot \tau)= \partial_1^{(i)}(\pi_k \cdot \tau) \, .
\]
Here the second equality follows from the assumption that $k$ and $k+1$ are in the same column in $L(\tau)$.

Suppose that $k$ and $k+1$ are in the same row in $L(\tau)$.
Then $\pi_k \cdot L(\tau) = 0$.
In addition, since $\pi_k \cdot \tau = s_k \cdot \tau$, we have that either $L(\tau_{i+1}^1) = k$ and $L(\tau)_{i+2}^{-1} = k+1$, or $L(\tau)_i^1 = k$ and $L(\tau)_{i+1}^{-1} = k+1$.
In case where $L(\tau)_{i+1}^1 = k$ and $L(\tau)_{i+2}^{-1} = k+1$, the assumption $\pi_k \cdot \tau = s_k \cdot \tau$ implies that
$L(\pi_k \cdot \tau)_{i+1}^1 = k+1$ and $L(\pi_k \cdot \tau)_{i+2}^{-1} = k$.
Thus, $L(\pi_k \cdot \tau) \notin \SRT(\alpha)$, that is, $\partial_1^{(i)}(\pi_k \cdot \tau) = 0$ as desired.
In case where $L(\tau)_i^1 = k$ and $L(\tau)_{i+1}^{-1} = k+1$, one can easily see that $L(\pi_k \cdot \tau) \notin \SRT(\alpha)$.
Thus $\pi_k \cdot \partial_1^{(i)}(\tau) = 0 = \partial_1^{(i)}(\pi_k \cdot \tau)$.
\end{proof}

Due to Lemma~\ref{lem: char of tab in T^i} and Lemma \ref{lem: phi_i is epic},
we can view
$\partial_1 = \sum_{i \in \calI(\alpha)} \partial_1^{(i)}$ as an
$H_n(0)$-module homomorphism from $\bigoplus_{i \in \calI(\alpha)} \bfP_{\bal^{(i)}}$ onto $\Omega(\calV_\alpha)$.
Now, we verify that $\partial_1$ is an essential epimorphism,
that is, $\ker(\partial_1) \subseteq \rad(\bigoplus_{i \in \calI(\alpha)} \bfP_{\bal^{(i)}})$.

To ease notation,
we write $\tauf$ for the source tableau $\tau_{\bal^{(i)}}$ in $\SRT(\bal^{(i)})$.
When $i \neq \ell(\alpha)-1$,
we can see that
\[
\begin{aligned}
(\tauf)_{i+1}^q &= m_i+1+q \quad \text{for } 1 \leq q \leq \alpha_{i+1}-1\, \text{, and} \\
(\tauf)_{i+2}^q &= m_{i+1}+q \quad \text{for } 1 \leq q \leq \alpha_{i+2}\, , \end{aligned}
\]
where $m_i = \sum_{j = 1}^{i}\alpha_j$.
Let $\taus$ denote the $\SRT$ of shape $\bal^{(i)}$ such that
\begin{align*}
\begin{aligned}
(\taus)_{i+1}^q &= m_i+1+ \alpha_{i+2}+q \quad \text{for } 1 \leq q \leq \alpha_{i+1}-1\, , \\
(\taus)_{i+2}^q &= m_{i}+1+q \quad \text{for } 1 \leq q \leq \alpha_{i+2}\, \text{, and} \\
(\taus)_p &= (\tauf)_p \quad \text{for }p \neq i, i+1.
\end{aligned}
\end{align*}
For example, if $\alpha = (1,3,3,1)$ and $i = 1$, then
\[
\tauf =
\begin{array}{l}
\begin{ytableau}
\none & \none & 5 & 8 \\
\none & \none & 6 \\
\none & \none & 7 \\
\none & 3 \\
1& 4 \\
2 \\
\end{ytableau}
\end{array}
\quad \text{and} \quad
\taus =
\begin{array}{l}
\begin{ytableau}
\none & \none & 3 & 8 \\
\none & \none & 4 \\
\none & \none & 5 \\
\none & 6 \\
1& 7 \\
2 \\
\end{ytableau}.
\end{array}
\]
Observe that $(\tauf)^\bullet_j = (\taus)^\bullet_j$ for $j \neq i+1, i+2$.

\begin{lemma}\label{lem: phi_i is essential}
For $i \in \calI(\alpha)$, $\ker(\partial_1^{(i)}) \subseteq \rad(\bfP_{\bal^{(i)}} )$.
\end{lemma}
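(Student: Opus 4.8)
The plan is to pin down $\ker(\partial_1^{(i)})$ in the standard‑ribbon‑tableau basis of $\bfP_{\bal^{(i)}}$ and observe that it avoids the ``source'' part. Recall first, from Theorem~\ref{thm: bfP isom to calP} together with the fact that the isomorphism $\bfP_{\bal^{(i)}}\cong\bigoplus_{\gamma\in[\bal^{(i)}]}\bfP_\gamma$ is realized monomially on tableaux, that $\rad(\bfP_{\bal^{(i)}})$ is the $\C$‑span of all $\tau\in\SRT(\bal^{(i)})$ other than the source tableaux $\{\tau^{(\gamma)}:\gamma\in[\bal^{(i)}]\}$, where $\tau^{(\gamma)}$ is the copy inside $\SRT(\bal^{(i)})$ of the source of the summand $\bfP_\gamma$. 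When $i=\ell(\alpha)-1$ this set is $\{\tauf\}$; when $i<\ell(\alpha)-1$ a direct comparison of column contents shows it equals $\{\tauf,\taus\}$, the relevant point being that $\taus$ is exactly the realization in $\SRT(\bal^{(i)})$ of the source of the $\big((\alpha_1,\dots,\alpha_{i-1},\alpha_i+1,\alpha_{i+1}-1)\odot(\alpha_{i+2},\dots,\alpha_{\ell(\alpha)})\big)$‑summand (while $\tauf$ realizes the concatenation‑summand). I will run the argument for $i<\ell(\alpha)-1$; the case $i=\ell(\alpha)-1$ is easier still, since there $\bfP_{\bal^{(i)}}$ is indecomposable, so $\rad(\bfP_{\bal^{(i)}})$ is its unique maximal submodule, whereas $\ker(\partial_1^{(i)})$ is a proper submodule because $H_n(0)\cdot T^{(i)}_{\alpha}\ne 0$.

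Since $\partial_1^{(i)}$ maps each $\tau\in\SRT(\bal^{(i)})$ either to $0$ (when $L(\tau)\notin\SRT(\alpha)$) or to the basis vector $L(\tau)$ of $\bfP_\alpha$ (when $L(\tau)\in\SRT(\alpha)$), the kernel is spanned over $\C$ by the tableaux $\tau$ with $L(\tau)\notin\SRT(\alpha)$ together with the differences $\tau-\tau'$ over pairs $\tau\ne\tau'$ with $L(\tau)=L(\tau')\in\SRT(\alpha)$. Thus the lemma reduces to two claims: (i) $L(\tauf)$ and $L(\taus)$ lie in $\SRT(\alpha)$, and (ii) $\tauf$ is the only $L$‑preimage of $L(\tauf)$ and $\taus$ the only $L$‑preimage of $L(\taus)$. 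Granting these, no spanning vector of $\ker(\partial_1^{(i)})$ involves $\tauf$ or $\taus$, whence $\ker(\partial_1^{(i)})\subseteq\C\{\tau\in\SRT(\bal^{(i)}):\tau\ne\tauf,\taus\}=\rad(\bfP_{\bal^{(i)}})$.

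Both (i) and (ii) are short computations with the explicit fillings. For (i): the identities $L(\tau_T)=T$ and $\tau_{T^{(i)}_{\alpha}}=\tauf$ recorded in the proofs of Lemmas~\ref{lem: char of tab in T^i} and~\ref{lem: phi_i is epic} give $L(\tauf)=T^{(i)}_{\alpha}\in\SRT(\alpha)$ immediately; for $L(\taus)$, reading off \eqref{eq: def of L(tau)} from the displayed columns of $\taus$ one gets $L(\taus)_i^\bullet=\{m_{i-1}+2,\dots,m_i+1\}$, $L(\taus)_{i+1}^\bullet=\{m_{i-1}+1\}\cup\{m_i+\alpha_{i+2}+2,\dots,m_{i+1}+\alpha_{i+2}\}$, and $L(\taus)_p^\bullet=(\tauf)_p^\bullet$ for $p\ne i,i+1$; column‑strictness is visible, and the ribbon (``junction'') inequalities $L(\taus)_p^1<L(\taus)_{p+1}^{-1}$ are either inherited from $\tauf\in\SRT(\bal^{(i)})$ (for $p\notin\{i-1,i,i+1,i+2\}$) or reduce to elementary inequalities of the form $m_{i-1}+2<(\text{a larger quantity})$, the only one needing the hypothesis $\alpha_{i+1}\ge 2$ (i.e.\ $i\in\calI(\alpha)$) being $m_{i-1}+2<m_{i+1}+\alpha_{i+2}$. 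For (ii): any $\tau$ with $L(\tau)=T$ agrees with $T$ in all columns except $i$ and $i+1$, and in those two it is determined by the single entry $\tau_i^1$, which must belong to $T_{i+1}^\bullet$ and satisfy $\tau_i^1<\min(T_i^\bullet)=m_{i-1}+2$; inspecting the two column sets above, the only candidate is $m_{i-1}+1$, which forces $\tau=\tauf$, respectively $\tau=\taus$. The step most in need of justification — and the main obstacle — is the monomial description of $\rad(\bfP_{\bal^{(i)}})$ together with the identification of its complementary source tableaux as exactly $\{\tauf,\taus\}$; once that is in hand, the remainder is the finite check just outlined.
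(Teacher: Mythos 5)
There is a genuine gap in the opening structural claim. You assert that $\rad(\bfP_{\bal^{(i)}})$ is the $\C$-span of $\SRT(\bal^{(i)})\setminus\{\tauf,\taus\}$, justifying this via Theorem~\ref{thm: bfP isom to calP} plus a ``monomial realization'' of the isomorphism $\bfP_{\bal^{(i)}}\cong\bigoplus_{\gamma\in[\bal^{(i)}]}\bfP_\gamma$. But that span is, in general, not even a submodule, and the isomorphism is not monomial. Concretely, take $\alpha=(1,3,1)$ and $i=1$, so $\bal^{(1)}=(2,2)\oplus(1)$: here $\tauf$ has column contents $\{1,2\},\{3,4\},\{5\}$ and $\taus$ has $\{1,2\},\{4,5\},\{3\}$, and the intermediate tableau $\tau_1:=\pi_4\cdot\tauf$, with columns $\{1,2\},\{3,5\},\{4\}$, satisfies $\pi_3\cdot\tau_1=\taus$ while $\tau_1\ne\tauf,\taus$. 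So $\C\{\tau:\tau\ne\tauf,\taus\}$ is not $\pi_3$-stable, and therefore cannot equal $\rad(\bfP_{\bal^{(i)}})$. This failure occurs whenever the interval $[\tauf,\taus]$ contains tableaux strictly between the endpoints, i.e.\ whenever $(\alpha_{i+1}-1)\alpha_{i+2}>1$. The source of the error is that the word bijection $\chi_{\bal^{(i)}}$ identifying $\SRT(\bal^{(i)})$ with $\bigsqcup_\gamma\SRT(\gamma)$ does \emph{not} commute with the $H_n(0)$-action — the paper introduces the sets $\tJ{\beta}{\bal}$ precisely to record this mismatch — so the summands of $\bfP_{\bal^{(i)}}$ are not spanned by subsets of the tableau basis, and any actual module isomorphism sends $\tauf$ to a genuine linear combination (as is noted explicitly in the paper's proof).

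The paper avoids pinning down $\rad(\bfP_{\bal^{(i)}})$ exactly. For $i<\ell(\alpha)-1$ it instead proves the pair of containments $\ker(\partial_1^{(i)})\subseteq\C[\tauf,\taus]^{\rmc}$ and $\C[\tauf,\taus]^{\rmc}\subseteq\rad(\bfP_{\bal^{(i)}})$. The second containment is where the real work happens: fix any module isomorphism $f:\bfP_{\bal^{(i)}}\to\bfP_{\beta^{(1)}}\oplus\bfP_{\beta^{(2)}}$, observe that $f(\tauf)$ has nonzero coefficients on both source tableaux, and show that for $\tau\notin[\tauf,\taus]$ the element $f(\tau)$ is supported strictly above both sources (using a factorization $\tau=\pi_\sigma\pi_k\pi_\rho\cdot\tauf$ with $k$ outside $[m_i+2,m_{i+2}-1]$). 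The first containment is your steps (i) and (ii) extended to the \emph{whole interval}: one must show $L(\tau)\in\SRT(\alpha)$ and that $\tau$ is the unique $L$-preimage of $L(\tau)$ for every $\tau\in[\tauf,\taus]$, not merely for the endpoints. Your computations (i) and (ii) are correct but only cover $\tauf$ and $\taus$; on their own they cannot yield $\ker(\partial_1^{(i)})\subseteq\rad(\bfP_{\bal^{(i)}})$ once the false radical description is removed. Your treatment of the case $i=\ell(\alpha)-1$ is fine.
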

\begin{proof}
If $i = \ell(\alpha)-1$, then $\bal^{(i)}$ is a composition.
Therefore, $\rad(\bfP_{\bal^{(i)}})$ is the $\C$-span of $\SRT(\bal^{(i)})\setminus \{ \tauf \}$.
Since $\partial_1^{(i)}(\tauf) \neq 0$, this implies that $\ker(\partial_1^{(i)}) \subseteq \rad(\bfP_{\bal^{(i)}} )$.

Suppose that $i \neq \ell(\alpha)-1$.
Let
\begin{equation}\label{compositions of alphai}
\begin{aligned}
\beta^{(1)} & = (\alpha_1, \alpha_2, \ldots, \alpha_{i-1}, \alpha_i +1, \alpha_{i+1} - 1, \alpha_{i+2} , \alpha_{i+3}, \ldots, \alpha_{\ell(\alpha)}), \\
\beta^{(2)} & = (\alpha_1, \alpha_2, \ldots, \alpha_{i-1}, \alpha_i +1, \alpha_{i+1} - 1 + \alpha_{i+2} , \alpha_{i+3}, \ldots, \alpha_{\ell(\alpha)}).
\end{aligned}
\end{equation}
To ease notation, we denote the source tableaux of $\bfP_{\beta^{(1)}}$ and $\bfP_{\beta^{(2)}}$ by $\tau^{(1)}$ and $\tau^{(2)}$, respectively.
By Theorem~\ref{thm: bfP isom to calP}, we may choose an $H_n(0)$-module isomorphism
\[
f:\bfP_{\bal^{(i)}} \ra  \bfP_{\beta^{(1)}} \oplus \bfP_{\beta^{(2)}}.
\]
Let
$$
f(\tauf) = \sum_{\tau \in \SRT(\beta^{(1)})} c_\tau \tau + \sum_{\tau \in \SRT(\beta^{(2)})} d_\tau \tau
\quad \text{for $c_\tau, d_\tau \in \C$}.
$$
Since $f(\tauf)$ is a generator of $\bfP_{\beta^{(1)}} \oplus \bfP_{\beta^{(2)}}$, $c_{\tau^{(1)}}$ and $d_{\tau^{(2)}}$ are nonzero.

We claim that $[\tauf, \taus]^\rmc \subset \rad (\bfP_{\bal^{(i)}})$.
Take any $\tau \notin [\tauf,\taus]$.
To get $\tau$ from $\tauf$, there should exist an $H_n(0)$-action switching two entries such that at least one of them lies apart from the $(i+1)$st and $(i+2)$nd columns.
Thus there exist $\sigma, \rho \in \SG_n$ and $k \notin [m_i+2, m_{i+2} - 1]$ such that
$$
\tau = \pi_\sigma \pi_k \pi_\rho \cdot \tauf, \quad
\pi_\rho \cdot \tauf \in [\tauf,\taus],
\quad \text{and} \quad
\pi_k \pi_\rho \cdot \tauf = s_k \cdot (\pi_\rho \cdot \tauf).
$$
Ignoring the columns filled with entries $[m_i+2, m_{i+2}]$,
we can see that all $\pi_\rho \cdot \tauf$, $\tau^{(1)}$, and $\tau^{(2)}$ are the same.
This implies that
$\pi_k \cdot \tau^{(j)} = s_k \cdot \tau^{(j)}$ for $j = 1,2$.
In all, we have
\begin{align*}
f(\tau) &=  \pi_\sigma \pi_k \pi_\rho \cdot f(\tauf)\\
&=
\pi_\sigma \pi_k \pi_\rho \cdot \left(
\sum_{\tau \in \SRT(\beta^{(1)})} c_\tau \tau + \sum_{\tau \in \SRT(\beta^{(2)})} d_\tau \tau
\right) \\
&= \sum_{\substack{\tau \in \SRT(\beta^{(1)}) \\ \tau > \tau^{(1)} }} c'_\tau \tau + \sum_{\substack{\tau \in \SRT(\beta^{(2)})  \\ \tau > \tau^{(2)} }} d'_\tau \tau
\end{align*}
for some $c'_\tau, d'_\tau \in \C$.
This implies that $f(\tau) \in \rad(\bfP_{\beta^{(1)}} \oplus \bfP_{\beta^{(2)}})$, hence $\tau \in \rad(\bfP_{\bal^{(i)}})$.

By virtue of the above discussion, to complete our assertion, it is enough to show that $\ker(\partial_1^{(i)}) \subseteq \C [\tauf, \taus]^\rmc$, equivalently, $L(\tau) \in \SRT(\alpha)$ for every $\tau \in [\tauf,\taus]$.
But this is obvious since $L(\tau)_i^1 = \tau_i^2 = m_{i-1}+2$, $L(\tau)_{i+1}^1 = \tau_{i}^{1} = m_{i-1}+1$, and $L(\tau)_{i+1}^{-1}, L(\tau)_{i+2}^{-1} \in [m_i+2, m_{i+2}]$.
\end{proof}

We are now in place to prove Theorem~\ref{main thm for V}.

\begin{proof}[Proof of Theorem~\ref{main thm for V}]
(a)
As mentioned after the proof of Lemma \ref{lem: phi_i is epic}, $\partial_1: \bigoplus_{i \in \calI(\alpha)} \bfP_{\bal^{(i)}} \ra \Omega(\calV_\alpha)$ is a surjective $H_n(0)$-module homomorphism.
Therefore, we only need to check $\ker(\partial_1) \subseteq \rad \left(\bigoplus_{i \in \calI(\alpha)} \bfP_{\bal^{(i)}} \right)$ to complete the proof of the assertion.
Let
$$
\bfT := \bigoplus_{i \in \calI(\alpha)} \C[\tauf,\taus]
\quad \text{and} \quad
\bfB := \bigoplus_{i \in \calI(\alpha)} \C[\tauf, \taus]^\rmc.
$$
In the proof of Lemma \ref{lem: phi_i is essential} we see that $[\tauf, \taus]^\rmc \subseteq \rad \, \bfP_{\bal^{(i)}}$ for $i \in \calI(\alpha)$ and thus
$\bfB \subseteq \rad \left(\bigoplus_{i \in \calI(\alpha)} \bfP_{\bal^{(i)}} \right)$.

In the following, we will prove $\ker(\partial_1) \subseteq \bfB$,
which is obviously a stronger inclusion than necessary.
We begin by collecting the following properties which were shown in the proof of Lemma \ref{lem: phi_i is essential}:
For all $i \in \calI(\alpha)$, $1\le j < i$, and $\tau \in [\tauf,\taus]$,
\begin{align*}
&\ker(\partial_1^{(i)}) \subseteq \C[\tauf, \taus]^\rmc,\\
&\partial_1^{(i)}(\tau)_i^1 = m_{i-1}+2\,, \text{ and} \\
&\partial_1^{(i)}(\tau)_j^1 = m_{j-1} +1.
\end{align*}
Therefore, for any $i, j \in \calI(\alpha)$ with $j < i$, if $\tau \in [\tauf,\taus] \subset \bfP_{\bal^{(i)}}$ and $\tau' \in [\taufj,\tausj] \subset \bfP_{\bal^{(j)}}$, then $\partial_1(\tau)_j^1 = \partial_1^{(i)} (\tau)_j^1 = m_{j-1} + 1$ and $\partial_1(\tau')_j^1 = \partial_1^{(j)} (\tau')_j^1 = m_{j-1} + 2$, that is, $\partial_1(\tau) \neq \partial_1(\tau')$.
This implies that that the set $\{\partial_1(\tau) \mid \tau \in [\tauf,\taus] \text{ for $i \in \calI(\alpha)$}\}$ is linearly independent, hence every $\bfx \in \ker(\partial_1) \setminus \{0\}$ is decomposed as $\bfx = \bfx^{(1)} + \bfx^{(2)}$ for some $\bfx^{(1)} \in \bfT$ and $\bfx^{(2)} \in \bfB \setminus \{0\}$.

We claim that $\bfx^{(1)} = 0$.
Suppose on the contrary that $\bfx^{(1)} \neq 0$.
Let
\[
\partial_1(\bfx^{(1)}) = \sum_{T \in \SRT(\alpha) \cap \Omega(\calV_\alpha) }c_T T
\quad \text{and} \quad
\partial_1(\bfx^{(2)}) = \sum_{T \in \SRT(\alpha) \cap \Omega(\calV_\alpha) }d_T T.
\]
Since $\partial_1(\bfx^{(1)}) \neq 0$, there exists $T \in \SRT(\alpha) \cap \Omega(\calV_\alpha)$ such that $c_{T} \neq 0$.
In addition, since $\SRT(\alpha) \cap \Omega(\calV_\alpha) $ is linearly independent and $\partial_1(\bfx) = 0$, we have $c_{T} = -d_{T}$.
Therefore, there exist $i,j \in \calI(\alpha)$, $\tau_\bfT \in [\tauf, \taus]$, and $\tau_\bfB \in [\taufj, \tausj]^\rmc$ such that $\partial_1(\tau_\bfT) = T = \partial_1(\tau_\bfB)$.
Since $\{\partial_1(\tau) \mid \tau \in \SRT(\bal^{(i)})\} \setminus \{0\}$ is linearly independent, we have $i \neq j$.
Note that $\partial_1(\tau_\bfB) = \partial_1^{(j)}(\tau_\bfB) \in H_n(0)\cdot T^{(j)}_\alpha$.
By Lemma \ref{lem: char of tab in T^i}, we have $T_j^1 > T_{j+1}^1$.
On the other hand, since $T = \partial_1^{(i)}(\tauT)$ and $\tau_\bfT \in [\tauf, \taus]$, $T$ is equal to $T^{(i)}_{\alpha}$ except for the $(i+1)$st and $(i+2)$nd columns.
Note that the $(i+1)$st and $(i+2)$nd columns of them are filled with $\{ (\tauT)_i^1 \} \cup [m_i+2, m_{i+2}]$ and
$T_{i+1}^1 = \partial_1^{(i)}(\tauT)_{i+1}^1 = m_{i-1} + 1$.
This shows that $T_j^1 < T_{j+1}^1$, which is absurd.
Hence $\bfx^{(1)} = 0$, and it follows that $\ker(\partial_1) \subseteq \bfB$, as required.

(b) For all $\beta \models n$, it is known that
\[\Ext_{H_n(0)}^1(\calV_\alpha,\bfF_{\beta})={\rm Hom}_{H_n(0)}(P_1,\bfF_{\beta}) \]
with $P_1:=\displaystyle{ \bigoplus_{i \in \calI(\alpha)} \bfP_{\bal^{(i)}}}$
(for instance, see~\cite[Corollary 2.5.4]{91Benson}).
In case of projective indecomposable modules, one has that
$\dim{\rm Hom}_{H_n(0)}(\bfP_\gamma,\bfF_{\gamma'})=\delta_{\gamma,\gamma'}$ for all $\gamma, \gamma' \models n$
(see~\cite[Lemma 1.7.5]{91Benson}).
This tells us that $\dim\Ext_{H_n(0)}^1(\calV_\alpha,\bfF_{\beta})$ counts the multiplicity of $\bfP_{\beta}$ in
the decomposition of $P_1$ into indecomposables.
The indecomposables which occur in the decomposition are precisely $\bfP_{\beta}$ with $\beta \in \calJ(\alpha)$.
We claim that all of them are multiplicity-free.
For $i \in \calI(\alpha)$, note that
$[\bal^{(i)}]=\{\beta^{(1)}, \beta^{(2)}\}$ with $\beta^{(1)}, \beta^{(2)}$ in \eqref{compositions of alphai}.
Obviously $\beta^{(1)}$ and $\beta^{(2)}$ are distinct.
Furthermore, for $i<j$, $[\bal^{(i)}]$ and $[\bal^{(j)}]$ are disjoint since
the $i$th entry of the compositions in the former is $\alpha_i+1$, whereas
that of the compositions in the latter is $\alpha_i$.
Hence the claim is verified, which completes the proof.
\end{proof}

\subsection{Proof of Theorem \protect\ref{Thm:injective hull V}}
\label{Subsec: Proof Par0 is injective hull}

We begin by introducing the necessary terminologies, notations, and lemmas.
First, we recall the notation related to parabolic subgroups of $\SG_n$.
For each subset $I$ of $[n-1]$, we write $(\SG_n)_I$ for the parabolic subgroup of $\SG_n$
generated by simple transpositions $s_i$ with $i\in I$
and $\paralong{I}$ for the longest element of $(\SG_n)_I$.
When $I$ is a subinterval $[k_1,k_2]$ of $[n-1]$ and $c \in I$, we write $(\SG_n)_I^{(c)}$ for
\begin{equation*}
\left\{
\sigma \in (\SG_{n})_I \, \middle| \,
\genfrac{}{}{0pt}{}{ \ \sigma(k_1) < \sigma(k_1 + 1) <
\cdots <  \sigma(c) \text{ and}
}{\sigma(c + 1) < \sigma(c + 2) <
\cdots
< \sigma(k_2+1) \
}
\right\},
\end{equation*}
and $\pqlong{I}{c}$ for the longest element of $(\SG_n)_I^{(c)}$ (see~\cite[Chapter 2]{05BB}).

Next, we introduce the sink tableau of $\bfP_\bal$.
Given a generalized composition $\bal$ of $n$, $\bfP_\bal$ contains a unique tableau $T$ such that $\pi_i \cdot T = 0$ or $T$ for all $i \in [n-1]$. We call it the {\em sink tableau} of $\bfP_\bal$, denoted by $T^{\leftarrow}_\bal$.
Explicitly, $T^{\leftarrow}_\bal$ is obtained by filling in $\trd(\bal)$ with entries $1, 2, \ldots, n$ from left to right, and from top to bottom.
Let us define a bijection
\[
\chi_\bal: \SRT(\bal) \ra \bigcup_{\beta \in [\bal]} \SRT(\beta), \quad T \mapsto T',
\]
where $T'$ is uniquely determined by the condition $\bfw(T) = \bfw(T')$.
With this bijection, we define
\begin{equation*}
\TLba{\beta}{\bal}  := \chi_\bal^{-1}(T^{\leftarrow}_\beta) \quad \text{ for every } \beta \in [\bal].
\end{equation*}
For $\beta \in [\balalp]$, we let
$$
\tJ{\beta}{\balalp}:= \{i \in [n-1] \mid \pi_i \cdot T^{\leftarrow}_\beta = 0, \text{ but } \pi_i \cdot \TLba{\beta}{\balalp} \neq 0 \}.
$$

For each $1 \le i \le n-1$, let $\opi_i := \pi_i -1$.
Pick up any reduced expression $s_{i_1}\cdots s_{i_p}$ for  $\sigma \in \SG_n$.
Let $\opi_{\sigma}$ be the element of $H_n(0)$ defined by $\opi_{\sigma} := \opi_{i_1} \cdots \opi_{i_p}$.
It is well known that the element $\opi_{\sigma}$ is independent of the choice of reduced expressions.

\begin{lemma}\label{lem: zero characterization}
{\rm \cite[Lemma 3.4 (1)]{21JKLO}}
For any $\sigma, \rho \in \SG_n$, $\pi_\sigma \opi_\rho$ is nonzero if and only if $\ell(\sigma\rho) = \ell(\sigma) + \ell(\rho)$.
\end{lemma}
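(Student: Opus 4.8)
The plan is to prove the statement by induction on $\ell(\rho)$, in the following sharper form: \emph{if} $\ell(\sigma\rho)=\ell(\sigma)+\ell(\rho)$, \emph{then} $\pi_\sigma\opi_\rho=\pi_{\sigma\rho}+(\text{a }\C\text{-linear combination of }\pi_\mu\text{'s with }\ell(\mu)<\ell(\sigma\rho))$, and \emph{otherwise} $\pi_\sigma\opi_\rho=0$. The equivalence in the lemma is an immediate consequence of this. I rely only on the two elementary multiplication rules, both of which follow at once from $\pi_i^2=\pi_i$ and the well-definedness of $\pi_\sigma$: first, $\pi_\sigma\pi_i=\pi_{\sigma s_i}$ if $\ell(\sigma s_i)>\ell(\sigma)$ and $\pi_\sigma\pi_i=\pi_\sigma$ otherwise; second, writing $\opi_i=\pi_i-1$, $\pi_\sigma\opi_i=\pi_{\sigma s_i}-\pi_\sigma$ if $i\notin\Des_R(\sigma)$ and $\pi_\sigma\opi_i=0$ if $i\in\Des_R(\sigma)$.

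For the inductive step I would fix a reduced factorization $\rho=s_j\rho'$ with $\ell(\rho')=\ell(\rho)-1$ (so in particular $j\notin\Des_L(\rho')$), use $\opi_\rho=\opi_j\opi_{\rho'}$, and hence $\pi_\sigma\opi_\rho=(\pi_\sigma\opi_j)\opi_{\rho'}$. If $j\in\Des_R(\sigma)$ then $\pi_\sigma\opi_j=0$, so $\pi_\sigma\opi_\rho=0$, while simultaneously $\ell(\sigma\rho)\le\ell(\sigma s_j)+\ell(\rho')=\ell(\sigma)+\ell(\rho)-2$, so the lengths are not additive and both halves of the claim hold. If $j\notin\Des_R(\sigma)$ then $\pi_\sigma\opi_\rho=\pi_{\sigma s_j}\opi_{\rho'}-\pi_\sigma\opi_{\rho'}$, and I split on whether $\ell(\sigma\rho')=\ell(\sigma)+\ell(\rho')$. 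If it is, I invoke the auxiliary Coxeter-group fact that $\ell(\sigma\rho')=\ell(\sigma)+\ell(\rho')$ together with $j\notin\Des_R(\sigma)$ and $j\notin\Des_L(\rho')$ forces $\ell(\sigma s_j\rho')=\ell(\sigma)+\ell(\rho')+1=\ell(\sigma)+\ell(\rho)$; then the induction hypothesis applies to both $(\sigma s_j,\rho')$ and $(\sigma,\rho')$, and since $\ell(\sigma s_j\rho')=\ell(\sigma\rho')+1$ the unique term of top length $\ell(\sigma\rho)$ in $\pi_{\sigma s_j}\opi_{\rho'}-\pi_\sigma\opi_{\rho'}$ is $\pi_{\sigma\rho}$ with coefficient $1$, which gives the sharper conclusion (in particular nonvanishing). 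If instead $\ell(\sigma\rho')<\ell(\sigma)+\ell(\rho')$, the induction hypothesis gives $\pi_\sigma\opi_{\rho'}=0$, so $\pi_\sigma\opi_\rho=\pi_{\sigma s_j}\opi_{\rho'}$; here additivity of $\ell$ over $(\sigma,\rho)$ is equivalent to additivity over $(\sigma s_j,\rho')$ because $\sigma\rho=\sigma s_j\rho'$ and $\ell(\sigma s_j)=\ell(\sigma)+1$, so the induction hypothesis applied to $(\sigma s_j,\rho')$ transports the conclusion verbatim.

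The only input that is not purely formal is the auxiliary fact: if $\ell(uv)=\ell(u)+\ell(v)$ and $s_i$ is neither a right descent of $u$ nor a left descent of $v$, then $\ell(us_iv)=\ell(u)+\ell(v)+1$. I would prove it by noting that $us_iv=(uv)s_\gamma$ where $s_\gamma$ is the reflection $v^{-1}s_iv$; since $s_i\notin\Des_L(v)$ the root $\gamma$ is positive, and since $s_i\notin\Des_R(u)$ one has $(uv)(\gamma)>0$, whence $\ell(us_iv)>\ell(uv)=\ell(u)+\ell(v)$, and this combined with the subadditive bound $\ell(us_iv)\le\ell(us_i)+\ell(v)=\ell(u)+\ell(v)+1$ pins down the value. (Alternatively this is a direct application of the deletion condition to the word $s_{i_1}\cdots s_{i_p}s_i s_{j_1}\cdots s_{j_q}$.)

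The main obstacle, and the reason the sharper inductive statement is indispensable, is the vanishing (``$\Rightarrow$'') direction: when $\ell(\sigma\rho)<\ell(\sigma)+\ell(\rho)$ one must exhibit genuine cancellation in $\pi_{\sigma s_j}\opi_{\rho'}-\pi_\sigma\opi_{\rho'}$, for which it is \emph{not} enough to know each summand separately — one needs the precise leading term, which the strengthened hypothesis records. Tracking that the top-length contribution never survives except as a single $\pi_{\sigma\rho}$, and that it disappears exactly when the lengths fail to add, is the crux; the remainder is a mechanical application of the two multiplication rules above.
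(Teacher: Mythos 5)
The paper does not prove this lemma; it imports it verbatim as \cite[Lemma 3.4 (1)]{21JKLO}, so there is no in-paper argument to compare against. Your proof is correct, and the strengthened induction hypothesis is exactly the right invariant: asserting that in the additive case the leading term is $\pi_{\sigma\rho}$ with coefficient $1$ (and not merely that the product is nonzero) is what makes the cancellation in $\pi_{\sigma s_j}\opi_{\rho'}-\pi_\sigma\opi_{\rho'}$ trackable, while simultaneously giving nonvanishing for free since $\{\pi_w\}_{w\in\SG_n}$ is a basis of $H_n(0)$. The three points requiring care all check out. The two one-step multiplication rules are immediate from $\pi_i^2=\pi_i$ and the well-definedness of $\pi_\sigma$. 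The auxiliary Coxeter fact is a correct application of the root criterion $\ell(ws_\gamma)>\ell(w)\iff w(\gamma)>0$ for a positive root $\gamma$, with $\gamma=v^{-1}\alpha_i$ (where $\alpha_i$ is the simple root of $s_i$); positivity of $\gamma$ comes from $i\notin\Des_L(v)$, positivity of $(uv)(\gamma)=u(\alpha_i)$ from $i\notin\Des_R(u)$, and the triangle inequality $\ell(us_iv)\le\ell(u)+\ell(v)+1$ pins the value. Finally, in Sub-case 2b the equivalence ``additivity for $(\sigma,\rho)$ iff additivity for $(\sigma s_j,\rho')$'' is correct because $\sigma\rho=\sigma s_j\rho'$, $\ell(\sigma s_j)=\ell(\sigma)+1$ (as $j\notin\Des_R(\sigma)$ there) and $\ell(\rho')=\ell(\rho)-1$, so the induction hypothesis for $(\sigma s_j,\rho')$ transports verbatim as claimed.
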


The following lemma gives an explicit description for $\soc(\bfP_{\balalp})$.

\begin{lemma}\label{Lem: nessary lemmas}
For $\beta \in [\balalp]$, $\C T^{\leftarrow}_\beta$ is isomorphic to
$\C\left( \opi_{w_0(\tJ{\beta}{\balalp})} \cdot \TLba{\beta}{\balalp}\right)$
as an $H_n(0)$-module.
\end{lemma}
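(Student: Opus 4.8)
The plan is to recognise both sides as one-dimensional $H_n(0)$-modules and to match their isomorphism types. A one-dimensional $H_n(0)$-module $\C y$ is determined up to isomorphism by $D(y):=\{i\in[n-1]\mid \pi_i\cdot y=0\}$, since then $\pi_i\cdot y=y$ for $i\notin D(y)$ and $\C y\cong\bfF_{\comp(D(y))}$. Because $H_n(0)$ is Frobenius, $\bfP_\beta$ is an injective indecomposable with simple socle; as $\C\TLb{\beta}$ is a one-dimensional, hence simple, submodule of $\bfP_\beta$, it equals $\soc(\bfP_\beta)$, and its isomorphism type is recorded by $D_\beta:=D(\TLb{\beta})$. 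So it suffices to prove that $x:=\opi_{w_0(\tJ{\beta}{\balalp})}\cdot\TLba{\beta}{\balalp}$ is nonzero, that $\C x$ is an $H_n(0)$-submodule of $\bfP_{\balalp}$, and that $D(x)=D_\beta$; recall $\tJ{\beta}{\balalp}\subseteq D_\beta$ by definition.

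Two algebraic facts drive the argument. First, $\pi_i\opi_i=\pi_i(\pi_i-1)=0$. Second, the longest element $w_0(\tJ{\beta}{\balalp})$ of $(\SG_n)_{\tJ{\beta}{\balalp}}$ has every $i\in\tJ{\beta}{\balalp}$ as a left descent, so $\opi_{w_0(\tJ{\beta}{\balalp})}=\opi_i\cdot\opi_{s_i\,w_0(\tJ{\beta}{\balalp})}$ for such $i$. Combining the two gives $\pi_i\cdot x=\pi_i\opi_i\cdot\opi_{s_i\,w_0(\tJ{\beta}{\balalp})}\cdot\TLba{\beta}{\balalp}=0$ for every $i\in\tJ{\beta}{\balalp}$; in particular $\tJ{\beta}{\balalp}\subseteq D(x)$. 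For the nonvanishing of $x$ I would invoke Huang's combinatorial model of $\bfP_{\balalp}$ (Theorem~\ref{thm: bfP isom to calP} and the surrounding material), writing $\TLba{\beta}{\balalp}=\pi_\rho\cdot T_{\balalp}$ for a minimal-length $\rho$ ($T_{\balalp}$ the source tableau), noting that by the definition of $\tJ{\beta}{\balalp}$ a reduced word for $w_0(\tJ{\beta}{\balalp})$ can be stacked on top of $\rho$ without any length collapse against the directions blocked by $\rho$, and then applying Lemma~\ref{lem: zero characterization} ($\pi_\sigma\opi_\tau\neq0\iff\ell(\sigma\tau)=\ell(\sigma)+\ell(\tau)$) to conclude $x\neq0$.

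It then remains to treat $i\notin\tJ{\beta}{\balalp}$ and show $\pi_i\cdot x=0$ when $i\in D_\beta$ and $\pi_i\cdot x=x$ when $i\notin D_\beta$. Here I would expand $x$ in the $\SRT(\balalp)$-basis via the antisymmetrizer identity $\opi_{w_0(\tJ{\beta}{\balalp})}=(-1)^{\ell(w_0(\tJ{\beta}{\balalp}))}\sum_{\sigma\in(\SG_n)_{\tJ{\beta}{\balalp}}}(-1)^{\ell(\sigma)}\pi_\sigma$, so that $x=\pm\sum_\sigma(-1)^{\ell(\sigma)}\pi_\sigma\cdot\TLba{\beta}{\balalp}$; the nonzero terms are governed by a left-weak-order down-set condition (again via Lemma~\ref{lem: zero characterization}). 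The crucial combinatorial input is the reading-word coincidence $\bfw(\TLba{\beta}{\balalp})=\bfw(\TLb{\beta})$: it forces the vertical order of $i$ and $i+1$ in $\TLba{\beta}{\balalp}$ to agree with that in $\TLb{\beta}$ for every $i\notin\tJ{\beta}{\balalp}$, and this relative order is untouched by the moves $\pi_\sigma$, $\sigma\in(\SG_n)_{\tJ{\beta}{\balalp}}$. Feeding this into the $\SRT$-action rules term by term, one sees that $\pi_i$ fixes every surviving term of $x$ when $i\notin D_\beta$, and that the surviving terms cancel in pairs after applying $\pi_i$ when $i\in D_\beta\setminus\tJ{\beta}{\balalp}$. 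Together with the case $i\in\tJ{\beta}{\balalp}$ this shows $\C x$ is a one-dimensional submodule with $D(x)=D_\beta$, hence $\C x\cong\bfF_{\comp(D_\beta)}\cong\C\TLb{\beta}$.

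The main obstacle will be precisely this last step: producing a usable closed form for $x=\pm\sum_\sigma(-1)^{\ell(\sigma)}\pi_\sigma\cdot\TLba{\beta}{\balalp}$ — pinning down which $\pi_\sigma\cdot\TLba{\beta}{\balalp}$ survive and how they sit relative to $\TLb{\beta}$ — and then carrying out the $\pi_i$-bookkeeping on it for $i\notin\tJ{\beta}{\balalp}$. The reading-word coincidence is what keeps the descent analysis outside $\tJ{\beta}{\balalp}$ tractable, and I expect verifying the cancellations for $i\in D_\beta\setminus\tJ{\beta}{\balalp}$ to be the technical heart of the proof.
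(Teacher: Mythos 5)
Your overall plan coincides with the paper's: reduce the lemma to showing that $\C x$ with $x:=\opi_{\longelto{\tJ{\beta}{\balalp}}}\cdot\TLba{\beta}{\balalp}$ is a one-dimensional submodule whose descent set matches $\Des(T^\leftarrow_\beta)$, and handle $i\in\tJ{\beta}{\balalp}$ by the identity $\pi_i\opi_i=0$ (equivalently Lemma~\ref{lem: zero characterization}). That part is fine. Where you go astray is in the mechanism for $i\notin\tJ{\beta}{\balalp}$, and the misdiagnosis leads you to flag as ``the technical heart'' a cancellation argument that in fact never arises.

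The structural fact you miss is that, after expanding $\opi_{\longelto{\tJ{\beta}{\balalp}}}$ as a signed sum of $\pi_\sigma$'s and applying it to $\TLba{\beta}{\balalp}$, every tableau appearing in the resulting $\Z$-combination lies in the interval $[\TLba{\beta}{\balalp},T^\leftarrow_{\balalp}]$, and every $T$ in that interval has $T(\tS_{k_0})=\{1,2,\dots,\ell(\alpha)\}$; the top horizontal strip carries the same fixed set of entries throughout. Two consequences follow immediately. First, $\Des(T^\leftarrow_\beta)\setminus\tJ{\beta}{\balalp}\subseteq[\ell(\alpha)-1]$: for $i\ge\ell(\alpha)$ both $i$ and $i+1$ sit in vertical strips, where two consecutive entries can never share a row, so $\pi_i\cdot\TLba{\beta}{\balalp}\ne 0$ and hence $i\in\tJ{\beta}{\balalp}$ whenever $i\in\Des(T^\leftarrow_\beta)$. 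Second, for $i\in\Des(T^\leftarrow_\beta)\setminus\tJ{\beta}{\balalp}$ (so $i<\ell(\alpha)$), the entries $i$ and $i+1$ both lie in $\tS_{k_0}$ of \emph{every} term, so $\pi_i$ annihilates each term individually; no signed cancellation occurs, and there is nothing to ``pin down.'' Likewise, for $i\notin\Des(T^\leftarrow_\beta)$ the same interval description gives $\pi_i\cdot T=T$ for each term; your heuristic that the reading-word coincidence keeps the relative vertical order of $i$ and $i+1$ intact under the moves $\pi_\sigma$, $\sigma\in(\SG_n)_{\tJ{\beta}{\balalp}}$, points in the right direction but is not self-justifying without the top-row constancy (indices $i$ with $i\pm 1\in\tJ{\beta}{\balalp}$ do get moved by those $\pi_\sigma$). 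In short: same strategy, but you predicted hard signed bookkeeping where the paper's proof is a clean termwise observation, because you did not isolate the interval and the invariance of $T(\tS_{k_0})$.
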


\begin{proof}
First, we claim that
$\opi_{\longelto{\tJ{\beta}{\balalp}}} \cdot \TLba{\beta}{\balalp}$ is stabilized under the action of $\pi_i$
for all $i \in \Des(T_\beta^{\leftarrow})^{\rm c}$.
Note that $\opi_{\longelto{\tJ{\beta}{\balalp}}} \cdot \TLba{\beta}{\balalp}$ is of the form
\begin{align}\label{eq: opi Tab expansion}
\sum_{T \in [\TLba{\beta}{\balalp}, T_{\balalp}^{\leftarrow}]} c_T T
\quad \text{for some $c_T \in \Z$.}
\end{align}
But, from the definitions of $\TLba{\beta}{\balalp}$ and $T_{\balalp}^{\leftarrow}$, it follows that $\pi_i \cdot T = T$ for $i \in \Des(T_\beta^{\leftarrow})^{\rm c}$.
Thus our claim is verified.

Next, we claim that $\pi_i \cdot (\opi_{\longelto{\tJ{\beta}{\balalp}}} \cdot \TLba{\beta}{\balalp}) = 0$ for all $i \in \Des(T_\beta^{\leftarrow})$.
Take any $i \in \Des(T_\beta^{\leftarrow})$.
Note that $T(\tS_{k_0}) = \{1,2,\ldots,\ell(\alpha)\}$ for any $T \in [\TLba{\beta}{\balalp}, T_{\balalp}^{\leftarrow}]$.
Therefore, if $1 \leq i < \ell(\alpha)$, then $\pi_i \opi_{\longelto{\tJ{\beta}{\balalp}}} \cdot \TLba{\beta}{\balalp} = 0$ by~\eqref{eq: opi Tab expansion}.
In case where $i \ge \ell(\alpha)$,  $i \in \tJ{\beta}{\balalp}$ and thus
$\pi_i \opi_{\longelto{\tJ{\beta}{\balalp}}}= 0$ by Lemma~\ref{lem: zero characterization}.
\end{proof}

\begin{example}
Given $\alpha = (2^3)$, let $\beta = (1^2,2,1^2)$ and $\gamma = (2^2,1^2)$ be compositions in $[\balalp] = [(1) \oplus (1) \oplus (2,1^2)]$.
Note that
$$
\ytableausetup{aligntableaux=center, boxsize=0.8em}
T^{\leftarrow}_{\beta} = \begin{ytableau}
\none & \none & 1 & 2 & 3 \\
4 & 5 & 6
\end{ytableau}
\quad
\TLba{\beta}{\balalp} = \begin{ytableau}
\none & \none & 1 & 2 & 3 \\
\none & \none & 6 \\
\none & 5 \\
4
\end{ytableau}
\quad\text{and}\quad
T^{\leftarrow}_{\gamma}  = \begin{ytableau}
\none & 1 & 2 & 3  \\
4 & 5\\
6
\end{ytableau}
\quad
\TLba{\gamma}{\balalp}  = \begin{ytableau}
\none & \none & 1 & 2 & 3  \\
\none & \none & 5 \\
\none & 4 \\
6
\end{ytableau}.
$$
Since $\tJ{\beta}{\balalp} = \{4,5\}$ and
$\tJ{\gamma}{\balalp} = \{4\}$, it follows that $\longelto{\tJ{\beta}{\balalp}} = s_4 s_5 s_4$ and $\longelto{\tJ{\gamma}{\balalp}} = s_4 $.
Thus we have
\begin{align*}
\ytableausetup{aligntableaux=center, boxsize=0.8em}
\C T^{\leftarrow}_{\beta} & \cong
\C\left(
\begin{ytableau}
\none & \none & 1 & 2 & 3 \\
\none & \none & 6 \\
\none & 5 \\
4
\end{ytableau}
-
\begin{ytableau}
\none & \none & 1 & 2 & 3 \\
\none & \none & 5 \\
\none & 6 \\
4
\end{ytableau}
-
\begin{ytableau}
\none & \none & 1 & 2 & 3 \\
\none & \none & 6 \\
\none & 4 \\
5
\end{ytableau}
+
\begin{ytableau}
\none & \none & 1 & 2 & 3 \\
\none & \none & 4 \\
\none & 6 \\
5
\end{ytableau}
+
\begin{ytableau}
\none & \none & 1 & 2 & 3 \\
\none & \none & 5 \\
\none & 4 \\
6
\end{ytableau}
-
\begin{ytableau}
\none & \none & 1 & 2 & 3 \\
\none & \none & 4 \\
\none & 5 \\
6
\end{ytableau}
\right)
\\
\C T^{\leftarrow}_{\gamma} & \cong
\C\left(
\begin{ytableau}
\none & \none & 1 & 2 & 3  \\
\none & \none & 5 \\
\none & 4 \\
6
\end{ytableau}
-\begin{ytableau}
\none & \none & 1 & 2 & 3  \\
\none & \none & 4 \\
\none & 5 \\
6
\end{ytableau}\right).
\end{align*}
\end{example}
\medskip

\begin{proof}[Proof of Theorem~\ref{Thm:injective hull V}]
We first claim that $\injhull: \calV_\alpha \to \bfP_{\balalp}$ is an $H_n(0)$-module homomorphism,
that is,
\begin{displaymath}
\injhull(\pi_i \cdot \calT) = \pi_i \cdot \injhull(\calT)  \quad\text{ for } i=1,2,\ldots,n-1 \text{ and }\calT \in \SIT(\alpha).
\end{displaymath}
Let us fix $1 \le i \le n-1$ and $\calT \in \SIT(\alpha)$.
Let $0 \leq x, y \leq m$ be integers satisfying that
$i \in \calT(\tS_{k_x})$ and $i+1 \in \calT(\tS_{k_y})$.

{\it Case 1: $\pi_i \cdot \calT = \calT$.}
First, we handle the case where $x=0$.
Then $i$ will be placed in the top row in $\TcalTt$.
In view of the given condition $\pi_i \cdot \calT = \calT$, one sees that $x \neq y$.
This implies that $i+1$ is strictly below $i$ in $\TcalTt$.
Next, we handle the case where $x>0$. 
The condition $\pi_i \cdot \calT = \calT$ says that $0 < x \leq y$, thus $i+1$ is strictly below $i$ in $\TcalTt$.
In either case, it is immediate from~\eqref{eq: action for ribbon} that $\pi_i \cdot \TcalTt = \TcalTt$.

{\it Case 2: $\pi_i \cdot \calT = 0$.}
From~\eqref{eq: action on SIT} it follows that $i$ and $i+1$ are in the first column in $\calT$, that is, $x = y = 0$. Hence, in $\TcalTt$, both of them will appear in $\TcalTt(\tS_{k_0})$.
As in {\it Case 1}, one can derive from~\eqref{eq: action for ribbon} that $\pi_i \cdot \TcalTt = 0$.

{\it Case 3: $\pi_i \cdot \calT = s_i \cdot \calT$.}
We claim that $\injhull (s_i \cdot \calT) = s_i \cdot \TcalTt$.
Observe that $i$ appears strictly above $i+1$ in $\calT$.
If $i+1 \in \calT(\tS_{k_0})$, then we see that $i \notin \calT(\tS_{k_0})$, which means that
$i$ appears strictly left of $i+1$ in $\TcalTt$.
Otherwise, we also see that $i \notin \calT(\tS_{k_0})$.
More precisely, if $i+1 \notin \calT(\tS_{k_0})$ and $i \in \calT(\tS_{k_0})$, then $\calT$ is not an SIT since the entries in the row containing $i+1$ of $\calT$ do not increase from left to right.
It follows from the construction of $\TcalTt$ that $i$ is strictly below $i+1$ in $\TcalTt$.
In either case, it holds that $T^{s_i \cdot \calT} = s_i \cdot \TcalTt$.
Thus we conclude that
$$
\pi_i \cdot \injhull(\calT) =  \pi_i \cdot \TcalTt = T^{s_i \cdot \calT} = \injhull(s_i \cdot \calT) = \injhull(\pi_i \cdot \calT).
$$

We next claim that $\bfP_{\balalp}$ is an essential extension of $\injhull(\calV_\alpha)$.
To do this, we see that $\soc(\bfP_{\balalp}) \subset \injhull(\calV_\alpha)$.
Note that
$$\soc(\bfP_{\balalp}) \cong \soc \Big(\bigoplus_{\beta \in [\balalp]} \bfP_{\beta} \Big)
\cong \bigoplus_{\beta \in [\balalp]} \C T^{\leftarrow}_{\beta}.
$$
In view of Lemma~\ref{Lem: nessary lemmas}, one sees that
\begin{align}\label{eq: socle of bfP balalp}
\soc(\bfP_{\balalp}) =
\bigoplus_{\beta \in [\balalp]}
\C\left(
\opi_{\longelto{\tJ{\beta}{\balalp}}} \cdot \TLba{\beta}{\balalp}\right).
\end{align}
Choose any $\beta \in [\balalp]$.
Then
$$
\opi_{\longelto{\tJ{\beta}{\balalp}}} \cdot \TLba{\beta}{\balalp}
= \sum_{\sigma \in (\SG_n)_{\tJ{\beta}{\balalp}}} (-1)^{\ell(\longelto{\tJ{\beta}{\balalp}}) - \ell(\sigma)} \pi_\sigma \cdot \TLba{\beta}{\balalp}.
$$
For $\sigma \in (\SG_n)_{\tJ{\beta}{\balalp}}$, since $(\pi_\sigma \cdot \TLba{\beta}{\balalp})(\tS_{k_0}) = \{1,2, \ldots, \ell(\alpha)\}$,
%\si{$\pi_\sigma \cdot \TLba{\beta}{\balalp}(\tS_{k_0}) \ra (\pi_\sigma \cdot \TLba{\beta}{\balalp})(\tS_{k_0})$} 
we have
$$
(\pi_\sigma \cdot \TLba{\beta}{\balalp})^1_{m+k_j-1} <
\begin{cases}
(\pi_\sigma \cdot \TLba{\beta}{\balalp})^{1}_j & \text{if $1 \leq j < m$}, \\[1ex]
(\pi_\sigma \cdot \TLba{\beta}{\balalp})^{2}_j & \text{if $j = m$}.
\end{cases}
$$
It means that $\pi_\sigma \cdot \TLba{\beta}{\balalp} \in \injhull(\calV_\alpha)$ for all $\sigma \in  (\SG_n)_{\tJ{\beta}{\balalp}}$.
Combining this with~\eqref{eq: socle of bfP balalp} yields that $\soc(\bfP_{\balalp}) \subset \injhull(\calV_\alpha)$.
\end{proof}

\subsection{Proof of Theorem \protect\ref{Thm: Main Section4}}
\label{Subsec: Proof of Theorem4}

Throughout this section, let us fix an integer $1 \le j \le m$ unless otherwise stated.

Let $T \in \SRT(\balalp)$.
In the same notation as in Section~\ref{Sec: extensions of F by V},
we claim that
\begin{equation}\label{Eq: bstau neq 0}
\bstau{T}{j} \neq 0 \quad \text{if and only if} \quad T^{1+\delta_{j,m}}_j < T^1_{m+k_j-1}.
\end{equation}
This is because that if $T^{1+\delta_{j,m}}_j < T^1_{m+k_j-1}$, then $\ebfw{T}{j} < T^1_{m+k_j-1}$
and therefore $\bstau{T}{j} \neq 0$.
Otherwise, $\bstau{T}{j}$ should be zero since $\ebfw{T}{j} > T^1_{m+k_j-1}$.

Let $\beta \in [\balalpp{j}]$.
Recall that $\TLba{\beta}{\balalpp{j}}  = \chi_{\balalpp{j}}^{-1}(T^{\leftarrow}_\beta)$
and
$$
\tJ{\beta}{\balalpp{j}}= \{i \in [n-1] \mid \pi_i \cdot T^{\leftarrow}_\beta = 0, \text{ but } \pi_i \cdot \TLba{\beta}{\balalpp{j}} \neq 0 \}.
$$
Note that if $\min(\tJ{\beta}{\balalpp{j}}) \leq \ell(\alpha)$, then
\begin{equation}\label{Eq: property of Jbetabalalppj}
\min(\tJ{\beta}{\balalpp{j}}) = |\tS'_{k_0}|
\quad \text{and} \quad
\min\left(\tJ{\beta}{\balalpp{j}} \setminus \{|\tS'_{k_0}|\} \right)  > \ell(\alpha) + 1.
\end{equation}
Set
$$
\htJ{\beta}{\balalpp{j}}:=
\begin{cases}
\tJ{\beta}{\balalpp{j}} \setminus \{|\tS'_{k_0}|\} & \text{if $1 \leq \min(\tJ{\beta}{\balalpp{j}}) \leq \ell(\alpha)$,} \\
\tJ{\beta}{\balalpp{j}} & \text{otherwise,}
\end{cases}
$$
and
$$
\longeltt{\beta}{j}:=
\begin{cases}
\pqlong{[\ell(\alpha)]}{|\tS'_{k_0}|}
\cdot \paralong{\htJ{\beta}{\balalpp{j}}}
& \text{if $1 \leq \min(\tJ{\beta}{\balalpp{j}}) \leq \ell(\alpha)$}, \\
\paralong{\tJ{\beta}{\balalpp{j}}}
& \text{otherwise.}
\end{cases}
$$
In view of~\eqref{Eq: property of Jbetabalalppj}, we know that every element of $\SGGG{n}{[\ell(\alpha)]}{(|\tS'_{k_0}|)}$ commutes with that of $(\SG_n)_{\htJ{\beta}{\balalpp{j}}}$.
The following lemma is necessary to show that $\soc(\bigoplus_{1 \leq j \leq m}\bfP_{\balalpp{j}})  \subseteq \mathrm{Im}(\opone)$.

\begin{lemma}\label{Lem: nessary lemmas II}
For $1 \leq j \leq m$ and $\beta \in [\balalpp{j}]$,
$\C T^{\leftarrow}_\beta \cong \C (\opi_{\longeltt{\beta}{j}} \cdot \TLba{\beta}{\balalpp{j}})$ as $H_n(0)$-modules.
\end{lemma}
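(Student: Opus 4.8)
The plan is to repeat the argument in the proof of Lemma~\ref{Lem: nessary lemmas}, now with $\balalpp{j}$, $\TLba{\beta}{\balalpp{j}}$ and $\longeltt{\beta}{j}$ in place of $\balalp$, $\TLba{\beta}{\balalp}$ and $\longelto{\tJ{\beta}{\balalp}}$. Put $v := \opi_{\longeltt{\beta}{j}}\cdot\TLba{\beta}{\balalpp{j}} \in \bfP_{\balalpp{j}}$. Every $1$-dimensional $H_n(0)$-module is some $\bfF_\gamma$ and is determined up to isomorphism by the set $\{i\in[n-1]\mid\pi_i \text{ acts as }0\}$; since $T^{\leftarrow}_\beta$ is a sink tableau, $\pi_i$ acts on $\C T^{\leftarrow}_\beta$ as $0$ exactly for $i\in\Des(T^{\leftarrow}_\beta)$ and as the identity otherwise. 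Hence it suffices to establish: (i) $v\neq 0$; (ii) $\pi_i\cdot v = v$ for all $i\notin\Des(T^{\leftarrow}_\beta)$; (iii) $\pi_i\cdot v = 0$ for all $i\in\Des(T^{\leftarrow}_\beta)$. Granting (i)--(iii), $\C v$ is a $1$-dimensional $H_n(0)$-submodule of $\bfP_{\balalpp{j}}$ with the same vanishing set as $\C T^{\leftarrow}_\beta$, so $\C T^{\leftarrow}_\beta \cong \C v$.

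First I would record an explicit expansion of $v$. When $\min(\tJ{\beta}{\balalpp{j}}) > \ell(\alpha)$, $\longeltt{\beta}{j} = \paralong{\tJ{\beta}{\balalpp{j}}}$ is a parabolic longest element and the computation is verbatim that of Lemma~\ref{Lem: nessary lemmas}: $v = \sum_T c_T\,T$ is a signed $\Z$-combination over the interval $[\TLba{\beta}{\balalpp{j}},\,T^{\leftarrow}_{\balalpp{j}}]$ of $\SRT(\balalpp{j})$, with $c_{T^{\leftarrow}_{\balalpp{j}}} = \pm 1$. When $1\le\min(\tJ{\beta}{\balalpp{j}})\le\ell(\alpha)$, I would use~\eqref{Eq: property of Jbetabalalppj}: the factors $\pqlong{[\ell(\alpha)]}{|\tS'_{k_0}|}$ (supported on $s_1,\dots,s_{\ell(\alpha)}$) and $\paralong{\htJ{\beta}{\balalpp{j}}}$ (supported on $\htJ{\beta}{\balalpp{j}}\subseteq[\ell(\alpha)+2,n-1]$) commute and their lengths add, so $\opi_{\longeltt{\beta}{j}} = \opi_{\pqlong{[\ell(\alpha)]}{|\tS'_{k_0}|}}\,\opi_{\paralong{\htJ{\beta}{\balalpp{j}}}}$; expanding $\opi_i = \pi_i-1$ and applying to $\TLba{\beta}{\balalpp{j}}$ again exhibits $v$ as a signed $\Z$-combination of tableaux of $\SRT(\balalpp{j})$ over an explicit interval, with its maximal tableau appearing with coefficient $\pm 1$. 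In all cases this yields (i); it also shows that every $T$ in the support of $v$ agrees with $\TLba{\beta}{\balalpp{j}}$ (and with $T^{\leftarrow}_{\balalpp{j}}$) on the strips $\tS'_{k_r}$ untouched by $\longeltt{\beta}{j}$, and that $T(\tS'_{k_{-1}})\cup T(\tS'_{k_0})$ is a fixed initial segment of $[n]$ for all such $T$.

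For (ii), fix $i\notin\Des(T^{\leftarrow}_\beta)$: exactly as in the first claim of the proof of Lemma~\ref{Lem: nessary lemmas}, the descriptions of $\TLba{\beta}{\balalpp{j}}$ and $T^{\leftarrow}_{\balalpp{j}}$ force $i$ to lie weakly right of $i+1$ in every $T$ in the support of $v$, so $\pi_i\cdot T = T$ by~\eqref{eq: action for ribbon} and hence $\pi_i\cdot v = v$. For (iii), fix $i\in\Des(T^{\leftarrow}_\beta)$, so $\pi_i\cdot T^{\leftarrow}_\beta = 0$, and split by the size of $i$. If $i>\ell(\alpha)$, then (arguing, as in Lemma~\ref{Lem: nessary lemmas}, that a sink-annihilated index exceeding $\ell(\alpha)$ must lie in $\tJ{\beta}{\balalpp{j}}$, and using~\eqref{Eq: property of Jbetabalalppj} to keep $i\ge\ell(\alpha)+2$ whenever the shuffle factor is present) $i$ is a left descent of the factor $\paralong{\htJ{\beta}{\balalpp{j}}}$ of $\longeltt{\beta}{j}$; since $\pi_i$ commutes with $\opi_{\pqlong{[\ell(\alpha)]}{|\tS'_{k_0}|}}$ (vacuously so if that factor is absent) and $\pi_i\opi_i = 0$, we get $\pi_i\opi_{\longeltt{\beta}{j}} = 0$, hence $\pi_i\cdot v = 0$. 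If $i\le\ell(\alpha)$, then as in the $i<\ell(\alpha)$ part of the proof of Lemma~\ref{Lem: nessary lemmas}, $i$ and $i+1$ are consecutive entries of the fixed initial segment $T(\tS'_{k_{-1}})\cup T(\tS'_{k_0})$ and lie in a common row of every $T$ in the support of $v$, so $\pi_i\cdot T = 0$ for each such $T$ and therefore $\pi_i\cdot v = 0$; the one index needing extra care here is $i = |\tS'_{k_0}|$, the boundary between $\tS'_{k_0}$ and the strips below it.

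I expect this last index, $i = |\tS'_{k_0}|\le\ell(\alpha)$, to be the main obstacle. In Lemma~\ref{Lem: nessary lemmas} the analogous index belongs to $\tJ{\beta}{\balalp}$ and the conclusion is immediate from Lemma~\ref{lem: zero characterization}; here $\pqlong{[\ell(\alpha)]}{|\tS'_{k_0}|}$ is only a minimal-length coset representative, so $|\tS'_{k_0}|$ need not be a left descent of it, and the vanishing of $\pi_{|\tS'_{k_0}|}\cdot v$ must be extracted from the precise way $\opi_{\pqlong{[\ell(\alpha)]}{|\tS'_{k_0}|}}$ interleaves $\TLba{\beta}{\balalpp{j}}(\tS'_{k_{-1}})$ and $\TLba{\beta}{\balalpp{j}}(\tS'_{k_0})$ — which is exactly why $\longeltt{\beta}{j}$ was built with the shuffle factor rather than with $s_{|\tS'_{k_0}|}$ alone. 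Pinning down the interval expansion of $v$ and the entries of its support inside the top strips (the input for both (ii) and this subcase) is the combinatorial core of the argument; the rest is the bookkeeping already performed for Lemma~\ref{Lem: nessary lemmas}.
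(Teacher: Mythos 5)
Your overall plan is sound, and parts (i), (ii) and the subcase $i>\ell(\alpha)$ of (iii) track the paper's argument closely. The genuine gap is in (iii) for $i\le\ell(\alpha)$ when the shuffle factor $\pqlong{[\ell(\alpha)]}{|\tS'_{k_0}|}$ is present, and it is broader than you allow. When $j<m$, the horizontal strips $\tS'_{k_{-1}}$ and $\tS'_{k_0}$ are \emph{disconnected} components of $\trd(\balalpp{j})$ lying in different rows (this is visible in {\sc Figure}~\ref{Fig: rd balalppj}). Applying the shuffle factor to $\TLba{\beta}{\balalpp{j}}$ distributes the entries $\{1,\dots,\ell(\alpha)+1\}$ across \emph{both} strips in essentially every possible way: for each $\sigma'' \in (\SG_n)^{(|\tS'_{k_0}|)}_{[\ell(\alpha)]}$, the tableau $\pi_{\sigma''}\cdot \TLba{\beta}{\balalpp{j}}$ has $T(\tS'_{k_0})$ equal to an arbitrary $|\tS'_{k_0}|$-subset. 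Consequently, for almost every $i\le\ell(\alpha)$ — not just $i=|\tS'_{k_0}|$ — there are tableaux $T$ in the support of $v$ with $i\in T(\tS'_{k_0})$ and $i+1\in T(\tS'_{k_{-1}})$, in which case $\pi_i\cdot T=s_i\cdot T\neq 0$; or with $i\in T(\tS'_{k_{-1}})$ and $i+1\in T(\tS'_{k_0})$, in which case $\pi_i\cdot T=T\neq 0$. Your claim that ``$i$ and $i+1$ lie in a common row of every $T$ in the support'' is therefore false term by term (the single clean index is $i=|\tS'_{k_{-1}}|$, the unique left descent of the shuffle factor in $[\ell(\alpha)]$, which is handled directly by Lemma~\ref{lem: zero characterization}).

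What actually makes $\pi_i\cdot v=0$ here is cancellation across the support, and the paper extracts it algebraically rather than combinatorially. For $i\le\ell(\alpha)$ with $i\notin\Des_L(\longeltt{\beta}{j})$, the authors write $s_i\,\pqlong{[\ell(\alpha)]}{|\tS'_{k_0}|}=\sigma s_{i'}$ for some $\sigma\in(\SG_n)^{(|\tS'_{k_0}|)}_{[\ell(\alpha)]}$ and $i'\in[\ell(\alpha)]$ with $i'\neq|\tS'_{k_0}|$, which (together with a commutation result from~\cite{21JKLO}) yields $\pi_i\,\opi_{\longeltt{\beta}{j}}=\mathbf{h}\,\pi_{i'}$ for some $\mathbf{h}\in H_n(0)$; then $\pi_{i'}\cdot\TLba{\beta}{\balalpp{j}}=0$ because $i'$ and $i'+1$ lie adjacent in the same strip of $\TLba{\beta}{\balalpp{j}}$. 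So the ``extra care'' you flag for one boundary index is in fact the entire content of (iii) on $[\ell(\alpha)]$, and it requires precisely this factorization through a different vanishing simple reflection rather than a term-by-term row check. Your writeup correctly identifies the shuffle factor as the crux and has the right architecture, but without this algebraic step the proof of (iii) is incomplete for the case $\min(\tJ{\beta}{\balalpp{j}})\le\ell(\alpha)$ with $j<m$.
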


\begin{proof}
Let $1 \leq j \leq m$ and $\beta \in [\balalpp{j}]$.
If $\min(\tJ{\beta}{\balalpp{j}}) > \ell(\alpha)$, then one can prove the assertion in the same way as in Lemma~\ref{Lem: nessary lemmas}.
We now assume that $\min(\tJ{\beta}{\balalpp{j}}) \leq \ell(\alpha)$.
We first show that
\begin{equation*}
\pi_i \cdot (\opi_{\longeltt{\beta}{j}} \cdot \TLba{\beta}{\balalpp{j}}) = \opi_{\longeltt{\beta}{j}} \cdot \TLba{\beta}{\balalpp{j}}
\end{equation*}
for $i \notin \Des(T^\leftarrow_\beta)$. Since
\begin{align*}
\opi_{\longeltt{\beta}{j}} \cdot \TLba{\beta}{\balalpp{j}}
= \sum_{T \in [\TLba{\beta}{\balalpp{j}}, T_{\balalpp{j}}^{\leftarrow}]} c_T T
\quad \text{for some $c_T \in \Z$,}
\end{align*}
it suffices to show that $\pi_i \cdot T = T$ for $i \notin \Des(T_\beta^{\leftarrow})$ and $T \in [\TLba{\beta}{\balalpp{j}}, T_{\balalpp{j}}^{\leftarrow}]$.
Since $\{1,2,\ldots, \ell(\alpha)\}\subseteq \Des(T^\leftarrow_\beta)$ by definition, we only consider that $i \geq \ell(\alpha)+1$.
If $i=\ell(\alpha)+1$, then the assertion follows from the fact that $T(\tS'_{k_0})\cup T(\tS'_{k_{-1}}) = \{1,2,\ldots,\ell(\alpha)+1\}$.
Otherwise, from the definitions of $\TLba{\beta}{\balalpp{j}}$ and $T_{\balalpp{j}}^{\leftarrow}$, it follows that $\pi_i \cdot T = T$ for $i \notin \Des(T_\beta^{\leftarrow})$.
Thus our claim is verified.

We next show that
$\pi_i \cdot (\opi_{\longeltt{\beta}{j}} \cdot \TLba{\beta}{\balalpp{j}}) = 0$ for $i \in \Des(T^\leftarrow_\beta)$.
Take any $i \in \Des(T_\beta^{\leftarrow})$.
If $i > \ell(\alpha) + 1$, then $i \in \htJ{\beta}{\balalpp{j}}$. Therefore, by Lemma~\ref{lem: zero characterization}, we have $\pi_i \opi_{\longeltt{\beta}{j}} = 0$, which implies $\pi_i \opi_{\longeltt{\beta}{j}} \cdot T^{\leftarrow}_{\beta,\balalp} = 0$.
Suppose that $i \le \ell(\alpha) + 1$.
Since $\ell(\alpha) + 1 \notin \Des(T^\leftarrow_\beta)$, we have that $1 \le i \le \ell(\alpha)$.
If $i \in \Des_L(\longeltt{\beta}{j})$,\
then $\pi_i \opi_{\longeltt{\beta}{j}} = 0$.
Thus, $\pi_i \opi_{\longeltt{\beta}{j}} \cdot T^{\leftarrow}_{\beta,\balalp} = 0$.
Otherwise, we have $s_i
\pqlong{[\ell(\alpha)]}{|\tS'_{k_0}|}
= \sigma s_{i'}$
for some $\sigma \in \SGGG{n}{[\ell(\alpha)]}{(|\tS'_{k_0}|)}$ and $1\le i' \le \ell(\alpha)$ with $i' \neq |\tS'_{k_0}|$
since $\pqlong{[\ell(\alpha)]}{|\tS'_{k_0}|}$ is the unique longest element in $\SGGG{n}{[\ell(\alpha)]}{(|\tS'_{k_0}|)}$.
Combining this with~\cite[Lemma 3.2]{21JKLO}, we have that $\pi_i \opi_{\longeltt{\beta}{j}}
= \mathbf{h} \pi_{i'}$ for some $\mathbf{h} \in H_n(0)$ and
$1 \le i' \le \ell(\alpha)$ with $i' \neq |\tS'_{k_0}|$.
Since $\pi_{i'} \cdot  \TLba{\beta}{\balalpp{j}} = 0$ for all $1 \le i' \le \ell(\alpha)$ with $i' \neq |\tS'_{k_0}|$, it follows that
\begin{equation*}
\pi_i \cdot (\opi_{\longeltt{\beta}{j}} \cdot \TLba{\beta}{\balalpp{j}})
= \mathbf{h} \pi_{i'} \cdot  \TLba{\beta}{\balalpp{j}}
= 0.
\qedhere
\end{equation*}
\end{proof}

\begin{example}\label{Ex: Lemma Cbeta cong Cw0}
Let $\alpha = (2,1,2,3) \models 8$. Note that $\calK(\alpha) = \{0,1,3,4\}$ and $\ell(\alpha)=4$.
Then $\balalpp{2} = (1) \oplus (3,1^2) \oplus (1^2)$.
Let $\beta = (1,3,1^4)$ and $\gamma = (1,3,1,2,1)$ in $[\balalpp{2}]$.
Note that
\begin{align*}
\ytableausetup{aligntableaux=center, boxsize=0.9em}
\begin{tikzpicture}
\def \hhh{35mm}
\def \hhhh{25mm}
\def \vvv{30mm}
\node at (\hhh*0,0) (A1) {
$T^\leftarrow_{\beta} =
\begin{ytableau}
\none & {\color{red} 1} & {\color{red} 2} & {\color{red} 3} & {\color{red} 4} & 5 \\
\none & 6\\
{\color{red} 7} & 8
\end{ytableau}$};
\node at (\hhh*1,0) (A2) {
$\TLba{\beta}{\balalpp{2}} =
\begin{ytableau}
\none & \none & \none & \none & {\color{red} 4} & 5 \\
\none & {\color{red} 1} & {\color{red} 2} & {3}\\
\none & 6 \\
\none & 8 \\
{7}
\end{ytableau}$};
\node at (\hhh*2,0) (A3) {
$T^\leftarrow_{\gamma} =
\begin{ytableau}
\none & \none & \none & {\color{red} 1} & 2\\
\none & {\color{red} 3} & {\color{red} 4} & 5\\
\none & 6\\
{\color{red} 7} & 8
\end{ytableau}$};
\node at (\hhh*3,0) (A4) {
$\TLba{\gamma}{\balalpp{2}} =
\begin{ytableau}
\none & \none & \none & \none &  {\color{red} 1} & 2 \\
\none & {\color{red} 3} & {\color{red} 4} & 5\\
\none & 6 \\
\none & 8 \\
{7}
\end{ytableau}$};
\end{tikzpicture}
\end{align*}
Here the entries $i$ in red in each SRT $T$ are being used to indicate that $\pi_i \cdot T = 0$.
Since $\min(\tJ{\beta}{\balalpp{2}}) = 3 \le \ell(\alpha)$ and $ \min(\tJ{\gamma}{\balalpp{2}}) = 7 > \ell(\alpha)$, 
\[
\longeltt{\beta}{2} = s_2 s_3 s_4 s_1 s_2 s_3 \cdot s_7 
\qquad\text{and}\qquad
\longeltt{\gamma}{2} = s_7.
\]
Therefore, by Lemma~\ref{Lem: nessary lemmas II}, we have
\begin{align*}
\ytableausetup{boxsize=0.81em}
\C T^\leftarrow_{\beta}
\cong \C ( \opi_2 \opi_3 \opi_4 \opi_1 \opi_2 \opi_3 \opi_7 \cdot \TLba{\beta}{\balalpp{2}})
\qquad\text{and}\qquad
\C T^\leftarrow_{\gamma}
\cong \C (\opi_7 \cdot \TLba{\gamma}{\balalpp{2}}).
\end{align*}
\end{example}

From now on, suppose that $n \ge 3$.
Fix $l \in [2, n-1]$ and $c \in [2,l]$.
For $\omega \in \PBQ{n}{l}{c}$, let $\DeltaOmega{\omega}$ be the permutation in $\PBQ{n}{l}{c}$ such that
$\DeltaOmega{\omega}(i)=\omega(1) + i-1$ for $1\le i \le c$.
Then we consider the map
\begin{equation*}
\phi: \PBQ{n}{l}{c}\ra (\SG_n)_{[l]},
\quad
\omega \mapsto \omega \DeltaOmega{\omega}^{-1}.
\end{equation*}
It can be easily seen that
\begin{equation}\label{decom of maximal quoteints}
\begin{aligned}
    &\bullet\, \phi(\omega)(i) = i \text{ for } 1 \leq i \leq \omega(1), \\
    &\bullet\, \phi(\omega)(\omega(1)+1) <\phi(\omega)(\omega(1)+2) < \cdots <\phi(\omega)(\omega(1)+c-1),\\
    &\bullet\,\phi(\omega)(\omega(1)+c) <\phi(\omega)(\omega(1)+c+1) < \cdots <\phi(\omega)(l+1)
\end{aligned}
\end{equation}
and particularly $\phi$ is an injective map.
Note that $\omega(1)$ can have values belonging to $[l-c+2]$.
For $1 \leq u \leq l-c+2$, \eqref{decom of maximal quoteints} implies that
\begin{align*}\label{Eq: image of phi omega(1)}
\phi\left(\{\omega\in \PBQ{n}{l}{c}: \omega(1)=u\}\right) = (\SG_n)^{(c+u-1)}_{[u+1,l]}.
\end{align*}
Here $(\SG_n)^{(l+1)}_{[u + 1, l]}$ is set to be $\{\id\}$.
Hence, letting $\deltau$ be the permutation in $\PBQ{n}{l}{c}$ such that $\deltau(i) = u+i-1$ for $1 \leq i \leq c$,
we have the following decomposition:
\begin{equation}\label{Eq: Bij PBQ}
\PBQ{n}{l}{c} = \bigsqcup_{1 \le u \le l - c + 2}
\left\{\zeta  \deltau \mid  \zeta \in (\SG_n)^{(c + u - 1)}_{[u + 1, l]} \right\}.
\end{equation}

In the following, for each $\omega \in \PBQ{n}{l}{c}$,
we will show that $\pi_{\omega} = \pi_{\phi(\omega)} \pi_{\DeltaOmega{\omega}}$.
Note that
\begin{align}\label{Eq: length of Delta and omega}
\ell(\DeltaOmega{\omega}) = c(\omega(1)-1)
\quad \text{and} \quad \ell(\omega) = \sum_{1 \leq i \leq c} (\omega(i)-i).
\end{align}
Since $\phi(\omega) \in (\SG_n)_{[\omega(1) + 1, l]}^{(c + \omega(1) - 1)}$,
\[
\ell(\phi(\omega))
= \sum_{\omega(1) + 1 \le i \le \omega(1) + c - 1} \left( \phi(\omega)(i) - i \right)
= \sum_{1 \le i \le c -1} \left( \phi(\omega)(\omega(1) + i) - \omega(1) - i \right).
\]
From the construction of $\phi$ one sees that $\phi(\omega)(\omega(1) + i) = \omega(i + 1)$, thus
\[
\ell(\phi(\omega))
= \sum_{1 \le i \le c -1} \left( \omega(i + 1) - \omega(1) - i \right).
\]
Combining this equality with~\eqref{Eq: length of Delta and omega} yields that
\begin{align*}
\ell(\phi(\omega)) + \ell(\DeltaOmega{\omega})
& = \sum_{1 \le i \le c -1} \left( \omega(i + 1) - \omega(1) - i \right) + c(\omega(1)-1)  \\
& = \sum_{1 \leq i \leq c} (\omega(i)-i)
= \ell(\omega).
\end{align*}
Since $\omega = \phi(\omega) \DeltaOmega{\omega}$, we have that $\DeltaOmega{\omega} \preceq_L \omega$, thus
\begin{equation}\label{Eq: Delta leq Omega}
\pi_{\omega} = \pi_{\phi(\omega)} \pi_{\DeltaOmega{\omega}}.
\end{equation}

Let $1 \le j \le m$ and $\beta \in [\balalpp{j}]$.
For $\sigma \preceq_L \longeltt{\beta}{j}$,
we define $\sigmatab{\sigma}{\beta}{j}$ to be the filling  of $\trd(\balalp)$
such that the column strip $\tS_{k_r}$ ($1 \leq r \leq m$) is filled with the entries of
$$
\begin{cases}
(\pi_\sigma \cdot \TLba{\beta}{\balalpp{j}})(\tS'_{k_j}) \cup \{\min((\pi_\sigma \cdot \TLba{\beta}{\balalpp{j}})(\tS'_{k_0}))\} & \text{ if } r = j,\\
(\pi_\sigma \cdot \TLba{\beta}{\balalpp{j}})(\tS'_{k_r}) & \text{ otherwise }
\end{cases}
$$
in such a way that the entries are increasing from top to bottom
and the row strip $\tS_{k_0}$ is filled with the entries of
$$
\left(
(\pi_\sigma \cdot \TLba{\beta}{\balalpp{j}})(\tS'_{k_{-1}}) \cup (\pi_\sigma \cdot \TLba{\beta}{\balalpp{j}})(\tS'_{k_{0}})
\right) \setminus \{\min((\pi_\sigma \cdot \TLba{\beta}{\balalpp{j}})(\tS'_{k_0}))\}
$$
in such a way that the entries are increasing from left to right.

\begin{example}\label{Ex: tausigmabetaj}
Let us revisit Example~\ref{Ex: Lemma Cbeta cong Cw0}.
Recall $\beta=(1,3,1^4)$ and $\balalpp{2} =(1) \oplus (3,1^2) \oplus (1^2)$.
For $\sigma = s_{[1,3]}, s_4s_{[1,3]}$, and $s_{[3,4]}s_{[1,3]}$,
it holds that $\sigma \preceq_L \longeltt{\beta}{2}$ and
\begin{align*}
\pi_{[1,3]} \cdot \TLba{\beta}{\balalpp{2}}
=
\scalebox{0.9}{$
\begin{ytableau} %7
\none & \none & \none & \none & 1 & 5 \\
\none & 2 & 3 & 4\\
\none & 6 \\
\none & 8 \\
7
\end{ytableau}
$}, \
\pi_{4}\pi_{[1,3]} \cdot \TLba{\beta}{\balalpp{2}}
=
\scalebox{0.9}{$
\begin{ytableau} %8
\none & \none & \none & \none & 1 & 4 \\
\none & 2 & 3 & 5\\
\none & 6 \\
\none & 8 \\
7
\end{ytableau}
$}, \
\pi_{[3,4]}\pi_{[1,3]} \cdot \TLba{\beta}{\balalpp{2}}
=
\scalebox{0.9}{$
\begin{ytableau} %9
\none & \none & \none & \none & 1 & 3 \\
\none & 2 & 4 & 5\\
\none & 6 \\
\none & 8 \\
7
\end{ytableau}
$}.
\end{align*}
Using these, we can check that
\begin{displaymath}
\sigmatab{\sigma}{(1,3,1^4)}{2} =
\scalebox{0.9}{$
\begin{ytableau}
\none & \none & 1 & 3 & 4 & 5\\
\none & \none & 6 \\
\none & \none & 8 \\
\none & 2 \\
7
\end{ytableau}
$}
\end{displaymath}
for all $\sigma = s_{[1,3]},  s_4s_{[1,3]},s_3s_4s_{[1,3]}$.
\end{example}

If there is no confusion for $j$ and $\beta$, then we simply write $T(\sigma)$ for $\sigmatab{\sigma}{\beta}{j}$.
For $\Theta(\calV_\alpha)$ defined in~\eqref{Eq: generators of cosyzygy}, we have the following lemma.

\begin{lemma}\label{Lem: tausigmabetaj}
Suppose that we have a pair $(j,\beta)$ with $1 \le j \le m$ and $\beta \in [\balalpp{j}]$
satisfying that $\min(\tJ{\beta}{\balalpp{j}}) \leq \ell(\alpha)$.
Then, for every permutation $\sigma \in \SG_n$ with $\sigma \preceq_L \longeltt{\beta}{j}$, it holds that
$T(\sigma) \in \Theta(\calV_\alpha)$.
\end{lemma}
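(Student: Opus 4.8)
The plan is to verify the two conditions defining membership in $\Theta(\calV_\alpha)$: that $T(\sigma)$ really is an element of $\SRT(\balalp)$, and that the fixed index $j$ witnesses the defining inequality, i.e. $T(\sigma)_{j}^{1+\delta_{j,m}} < T(\sigma)^{1}_{m+k_j-1}$. First I would record that under the standing hypothesis one has $j<m$, so that $1+\delta_{j,m}=1$: by \eqref{Eq: property of Jbetabalalppj} the hypothesis forces $|\tS'_{k_0}| = \min(\tJ{\beta}{\balalpp{j}}) \le \ell(\alpha)$, hence $|\tS'_{k_0}| \in \tJ{\beta}{\balalpp{j}}$; but when $j=m$ the strips $\tS'_{k_0}$ and $\tS'_{k_{-1}}$ are joined into a single row of $\trd(\balalpp{m})$ which in $\TLba{\beta}{\balalpp{m}}$ carries the entries $1,\dots,\ell(\alpha)+1$ in increasing order, so $|\tS'_{k_0}|$ and $|\tS'_{k_0}|+1$ lie in a common row, $\pi_{|\tS'_{k_0}|}\cdot \TLba{\beta}{\balalpp{m}}=0$, and $|\tS'_{k_0}|\notin \tJ{\beta}{\balalpp{m}}$ — a contradiction.

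The one structural input I would invoke is that, for $\sigma \preceq_L \longeltt{\beta}{j}$, the tableau $\pi_\sigma\cdot \TLba{\beta}{\balalpp{j}}$ lies in the interval $[\TLba{\beta}{\balalpp{j}}, T^{\leftarrow}_{\balalpp{j}}]$ of $\SRT(\balalpp{j})$ — exactly the support of $\opi_{\longeltt{\beta}{j}}\cdot \TLba{\beta}{\balalpp{j}}$ displayed in the proof of Lemma~\ref{Lem: nessary lemmas II} — and that, as observed there, every tableau in this interval carries the entries $\{1,2,\dots,\ell(\alpha)+1\}$ on $\tS'_{k_{-1}} \cup \tS'_{k_0}$, hence only entries $\ge \ell(\alpha)+2$ on each of $\tS'_{k_1},\dots,\tS'_{k_m}$. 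Setting $a := \min\bigl((\pi_\sigma\cdot \TLba{\beta}{\balalpp{j}})(\tS'_{k_0})\bigr)$, I would then extract two elementary consequences: the set $(\pi_\sigma\cdot \TLba{\beta}{\balalpp{j}})(\tS'_{k_{-1}})$ has only $k_j-1$ elements and so cannot contain all of $\{1,\dots,k_j\}$, giving $a \le k_j$; and $a \le \ell(\alpha) < \min\bigl((\pi_\sigma\cdot \TLba{\beta}{\balalpp{j}})(\tS'_{k_j})\bigr)$.

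With this in hand, checking $T(\sigma)\in\SRT(\balalp)$ is mechanical: since $j<m$, the connected components of $\trd(\balalp)$ are the standalone columns $\tS_{k_1},\dots,\tS_{k_{m-1}}$ and the hook $\tS_{k_0}\cup \tS_{k_m}$; every single column except $\tS_{k_j}$ is copied verbatim from the already column-increasing $\pi_\sigma\cdot \TLba{\beta}{\balalpp{j}}$, the column $\tS_{k_j}$ is obtained by prepending the (minimal) entry $a$, the row $\tS_{k_0}$ is $\{1,\dots,\ell(\alpha)+1\}\setminus\{a\}$ listed increasingly, and its leftmost entry — which is $1$ or $2$ — is below every entry of $\tS_{k_m}=\tS'_{k_m}$, so the first column of the hook is increasing too. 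Then for the inequality, by construction $T(\sigma)_j^1 = a$, whereas $T(\sigma)^1_{m+k_j-1}$ is the $k_j$th smallest element of $\{1,\dots,\ell(\alpha)+1\}\setminus\{a\}$; since $a\le k_j$, this $k_j$th element is $k_j+1$, so $T(\sigma)_j^1 = a \le k_j < k_j+1 = T(\sigma)^1_{m+k_j-1}$, and therefore $T(\sigma)\in\Theta(\calV_\alpha)$.

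The hardest part will be the structural input of the second paragraph, namely confirming that the action of $\pi_\sigma$ for $\sigma \preceq_L \longeltt{\beta}{j}$ keeps the tableau inside $[\TLba{\beta}{\balalpp{j}}, T^{\leftarrow}_{\balalpp{j}}]$ and that each tableau of that interval places $\{1,\dots,\ell(\alpha)+1\}$ on $\tS'_{k_{-1}}\cup\tS'_{k_0}$. These rest on the explicit shape of $\trd(\balalpp{j})$, on $\TLba{\beta}{\balalpp{j}} = \chi_{\balalpp{j}}^{-1}(T^{\leftarrow}_\beta)$, and on the factorization of $\longeltt{\beta}{j}$ through the longest elements $\pqlong{[\ell(\alpha)]}{|\tS'_{k_0}|}$ and $\paralong{\htJ{\beta}{\balalpp{j}}}$ — all already set up in the proof of Lemma~\ref{Lem: nessary lemmas II}, so they may be quoted rather than re-derived; once granted, only the set-counting comparisons above remain.
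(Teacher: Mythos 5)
Your proof is correct and, at its core, follows the same route as the paper's: you use the fact, quoted from the proof of Lemma~\ref{Lem: nessary lemmas II}, that $(\pi_\sigma \cdot \TLba{\beta}{\balalpp{j}})(\tS'_{k_{-1}}) \cup (\pi_\sigma \cdot \TLba{\beta}{\balalpp{j}})(\tS'_{k_0}) = \{1,\ldots,\ell(\alpha)+1\}$; you deduce from $|\tS'_{k_{-1}}| = k_j - 1$ that $a := \min((\pi_\sigma \cdot \TLba{\beta}{\balalpp{j}})(\tS'_{k_0})) \le k_j$; and you compare $a$ with the $k_j$th smallest element of $\{1,\ldots,\ell(\alpha)+1\}\setminus\{a\}$ to get the inequality. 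That is exactly the paper's argument for the inequality.

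Where you genuinely add something is the preliminary observation that the hypothesis $\min(\tJ{\beta}{\balalpp{j}}) \le \ell(\alpha)$ forces $j < m$. The paper does not record this; it simply carries the term $1 + \delta_{j,m}$ through and dismisses $T(\sigma) \in \SRT(\balalp)$ as ``clear.'' Your observation is not merely a cosmetic simplification, because the SRT-ness of $T(\sigma)$ is actually delicate precisely when $j = m$: in that case $\tS_{k_m}$ is filled with $(\pi_\sigma \cdot \TLba{\beta}{\balalpp{j}})(\tS'_{k_m}) \cup \{a\}$ and the entry directly above $a$ in the hook column is $\min(\{1,\ldots,\ell(\alpha)+1\}\setminus\{a\})$, which exceeds $a$ when $a = 1$ — so the filling would not be column-increasing. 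Your contradiction argument (that when $j = m$ the strips $\tS'_{k_0}$ and $\tS'_{k_{-1}}$ form a single top row carrying $\{1,\ldots,\ell(\alpha)+1\}$, hence $|\tS'_{k_0}|$ and $|\tS'_{k_0}|+1$ are row-adjacent and $|\tS'_{k_0}| \notin \tJ{\beta}{\balalpp{m}}$, contradicting \eqref{Eq: property of Jbetabalalppj}) is sound and closes exactly the gap that makes the paper's ``clear'' claim implicitly conditional on the hypothesis. Your subsequent verification that each column of $T(\sigma)$ is increasing is then genuinely mechanical because with $j < m$ the entry $a$ always sits at the top of a standalone column $\tS_{k_j}$ rather than inside the hook. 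In short: same computation for the key inequality, but you make explicit and justify a structural fact the paper leaves tacit.
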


\begin{proof}
It is clear that $\sigmatabsh{\sigma} \in \SRT(\balalp)$.
Thus, for the assertion, we have only to show that $\sigmatabsh{\sigma}_j^{1+\delta_{j,m}} < \sigmatabsh{\sigma}^1_{m+k_j-1}$.
Note that
\begin{equation*}
(\pi_\sigma \cdot \TLba{\beta}{\balalpp{j}})(\tS'_{k_{-1}}) \cup  (\pi_\sigma \cdot \TLba{\beta}{\balalpp{j}})(\tS'_{k_0})
= \{1,2,\ldots,\ell(\alpha)+1\},
\end{equation*}
which implies that
\[1 \leq \min((\pi_\sigma \cdot \TLba{\beta}{\balalpp{j}})(\tS'_{k_0})) \leq |\tS'_{k_{-1}}|+1.\]
Since $|\tS'_{k_{-1}}| = k_j-1$, it follows that
$\min((\pi_\sigma \cdot \TLba{\beta}{\balalpp{j}})(\tS'_{k_0})) \leq k_j$.
On the other hand, from the observation that $\sigmatabsh{\sigma}^1_{m+k_j-1}$ is
the $k_j$th smallest element in the set
\[\{1,2,\ldots,\ell(\alpha)+1\}\setminus\{\min(\pi_\sigma \cdot \TLba{\beta}{\balalpp{j}}(\tS'_{k_0}))\},\]
we see that
$k_j <  \sigmatabsh{\sigma}^1_{m+k_j-1}$.
As a consequence, we derive the following inequality:
\begin{equation*}
\sigmatabsh{\sigma}^{1+\delta_{j,m}}_j = \min((\pi_\sigma \cdot \TLba{\beta}{\balalpp{j}})(\tS'_{k_0})) \leq k_j <  \sigmatabsh{\sigma}^1_{m+k_j-1}. \qedhere
\end{equation*}
\end{proof}

We are now ready to prove Theorem~\ref{Thm: Main Section4}.

\begin{proof}[Proof of Theorem~\ref{Thm: Main Section4}]
(a)
Given $1 \le i \le n-1$ and $T \in \SRT(\balalp)$, we have three cases.

{\it Case 1:} $\pi_i \cdot T = T$.
We claim that $i \notin \Des(\tau_{T;j;A})$ for all $1 \leq j \leq m$ and $A \in \setA{T}{j}$.
Fix $j \in [m]$ and $A \in \setA{T}{j}$.
Since $i \notin \Des(T)$,  $i$ is weakly right of $i+1$ in $T$.
If neither $i$ nor $i+1$ appear in $\bfw_{T;j}$, then $i$ and $i+1$ still hold their positions in $\tau_{T;j;A}$, so $i \notin \Des(\tau_{T;j;A})$.
If $i$ appears in $\bfw_{T;j}$ and $i+1$ does not appear in $\bfw_{T;j}$,
then $i+1$ holds its position in $\tau_{T;j;A}$ but $i$ is moved to the right in $\tau_{T;j;A}$, so $i \notin \Des(\tau_{T;j;A})$.
Suppose that $i$ does not appear in $\bfw_{T;j}$ and $i+1 = w_{u_k}$ for some $1 \le k \le l$, where $\bfw_{T;j} = w_{u_1} w_{u_2} \cdots w_{u_l}$.
By the definition of $\bfw_{T;j}$, $w_{u_{k+1}} < i$ and appears strictly left of $i$ if $k < l$, and $i \in T(\tS_{k_0})$ if $k = l$.
Thus, $i \notin \Des(\tau_{T;j;A})$.

{\it Case 2:} $\pi_i \cdot T = 0$.
We claim that $\pi_i \cdot \bstau{T}{j} = 0$ for all $1 \leq j \leq m$.
Fix $j \in [m]$.
Since  $i, i+1 \in T(\tS_0)$ by the shape of $T$, $\ebfw{T}{j} \neq i, i+1$.
So we have from the definition of $\tA_{T;j}$ that either $i, i+1 \notin \tA_{T;j}$ or $i, i+1 \in \tA_{T;j}$.
If $i, i+1 \notin \tA_{T;j}$, then $i,i+1 \in \tau_{T;j;A}(\tS'_{k_{-1}})$ for all $A \in \setA{T}{j}$, so $\pi_i \cdot \bstau{T}{j} = 0$.
If $i, i+1 \in \tA_{T;j}$, then $\setA{T}{j} = \mathcal{X} \cup \mathcal{Y} \cup \mathcal{Z}$, where
\begin{align*}
\mathcal{X} & := \{ A \in \setA{T}{j} \mid i \in A, \ i+1 \notin A \} \\
\mathcal{Y} & := \{ A \in \setA{T}{j} \mid i \notin A, \ i+1 \in A \} \\
\mathcal{Z} & :=\{ A \in \setA{T}{j} \mid i,i+1 \in A \} \cup \{ A \in \setA{T}{j} \mid i,i+1 \notin A \}
\end{align*}
Note that $\pi_i \cdot \tau_{T;j;A} = 0$ for any $A \in \mathcal{Z}$.
Therefore, the claim can be shown by proving that
\begin{align}\label{eq: pi_i sum = 0}
\pi_i \left(\sum_{A \in \mathcal{X}} \sgn(A)\tau_{T;j;A} + \sum_{A \in \mathcal{Y}} \sgn(A)\tau_{T;j;A} \right) = 0.
\end{align}
Let us consider the bijection $f: \mathcal{X} \ra \mathcal{Y}$ by
$$
A \mapsto (A\setminus{\{i\}})\cup{\{i+1\}}.
$$
Since $\sgn(A) + \sgn(f(A)) = 0$ and $\tau_{T;j;f(A)} = s_i \cdot \tau_{T;j;A}$, we obtain~\eqref{eq: pi_i sum = 0}.

{\it Case 3:} $\pi_i \cdot T = s_i \cdot T$.
We claim that $\pi_i \cdot \bstau{T}{j} = \bstau{(\pi_i \cdot T)}{j}$ for all $1 \leq j \leq m$.
Fix $1 \leq j \leq m$ with $\bstau{T}{j} \neq 0$.
If $i+1 \notin T(\tS_{k_0})$, then $\ebfw{T}{j} = \ebfw{\pi_i \cdot T}{j}$
and $\tA_{T;j} = \tA_{(\pi_i \cdot T);j}$, so $\setA{T}{j} = \setA{(\pi_i \cdot T)}{j}$. This implies that
$$
\pi_i \cdot \bstau{T}{j} = \pi_i \left(\sum_{A\in \setA{T}{j}} \sgn(A) \tau_{T;j;A}\right) = \sum_{A\in \setA{\pi_i \cdot T}{j}} \sgn(A) \tau_{\pi_i \cdot T;j;A} = \bstau{(\pi_i\cdot T)}{j}.
$$

Let us assume that $i+1 \in T(\tS_{k_0})$.
First, we consider the case where $\ebfw{T}{j} = i$.
Combining the assumption $\bstau{T}{j} \neq 0$ with~\eqref{Eq: bstau neq 0} yields that $T^1_{m+k_j-1} > i$.
In addition, for any $A \in \setA{T}{j}$ with $i+1 \in A$, we have $\pi_i \cdot \tau_{T;j;A} = 0$.
Therefore,
\begin{equation}\label{eq: pi cdot bstau}
\pi_i \cdot \bstau{T}{j}  
= 
\sum_{\substack{A \in \setA{T}{j} \\ i+1 \notin A} } \sgn(A) \ \pi_i \cdot \tau_{T;j;A}.
\end{equation}
On the other hand, since $\ebfw{\pi_i \cdot T}{j} = i+1$, we have
\[
\setA{\pi_i \cdot T}{j} = \{A \in \setA{T}{j} \mid i+1 \notin A\}.
\]
This implies that  
\begin{align}\label{Eq: bstau def}
\bstau{\pi_i \cdot T}{j} = \sum_{A \in \setA{\pi_i \cdot T}{j}} \sgn(A) \tau_{\pi_i \cdot T;j;A}
= \sum_{\substack{A \in \setA{T}{j} \\ i+1 \notin A} }
 \sgn(A) \ \tau_{\pi_i \cdot T;j;A}.
\end{align}
For any $A \in \setA{T}{j}$ with $i+1 \notin A$, one can see that $\pi_i \cdot \tau_{T;j;A} = \tau_{\pi_i \cdot T;j;A}$.
Combining this equality with the equalities~\eqref{eq: pi cdot bstau} and~\eqref{Eq: bstau def}, we have $\pi_i \cdot \bstau{T}{j} = \bstau{\pi_i \cdot T}{j}$.

Next, we consider the case where $\ebfw{T}{j} \neq i$.
Then one sees that
$$
\tA_{(\pi_i \cdot T);j} =
\begin{cases}
\tA_{T;j}  & \text{ if } \ebfw{T}{j} > i, \\
(\tA_{T;j} \setminus \{i+1\})\cup \{ i\} & \text{ if }  \ebfw{T}{j} < i.
\end{cases}
$$
In the former case,
one can see that $\pi_i \cdot \bstau{T}{j} = \bstau{(\pi_i \cdot T)}{j}$ by mimicking the proof of the case where $i+1 \notin T(\tS_{k_0})$.
For the latter case, set
$$
f: \setA{T}{j} \rightarrow \setA{(\pi_i \cdot T)}{j}, \quad
A \mapsto f(A) :=
\begin{cases}
(A \setminus \{i+1 \})\cup \{ i\} & \text{ if } i+1 \in A, \\
A & \text{ otherwise.}
\end{cases}
$$
It is clear that $f$ is bijective.
Moreover, since $\sgn(A) = \sgn(f(A))$ and $\pi_i \cdot \tau_{T;j;A} = \tau_{(\pi_i\cdot T);j;f(A)}$, it follows that
\[
\pi_i \cdot \bstau{T}{j}
=\sum_{A \in \setA{T}{j}} \sgn (A) \pi_i \cdot \tau_{T;j;A}
=\sum_{f(A) \in \setA{(\pi_i \cdot T)}{j}} \sgn (f(A)) \tau_{(\pi_i\cdot T);j;f(A)}
=  \bstau{(\pi_i \cdot T)}{j}.
\]

(b)
Let us show $\ker(\partial^1) \supseteq \injhull(\calV_\alpha)$.
Recall that
$$
\injhull(\calV_\alpha) = \C \{ T \in \SRT(\balalp) \mid T_j^{1+\delta_{j,m}} > T^1_{m+k_j-1} \text{ for all } 1\leq j \leq m \}.
$$
Therefore, it suffices to show that
$$
\ker(\partial^1) \supseteq \{ T \in \SRT(\balalp) \mid T_j^{1+\delta_{j,m}} > T^1_{m+k_j-1} \text{ for all } 1\leq j \leq m \}.
$$
Let $T \in \{ T \in \SRT(\balalp) \mid T_j^{1+\delta_{j,m}} > T^1_{m+k_j-1} \text{ for all } 1\leq j \leq m \}$.
For every $1\leq j \leq m$, there exists $j'>j$ such that
$\ebfw{T}{j} = T_{j'}^{1+\delta_{j',m}}$.
By definition one has
$$
T_{j'}^{1+\delta_{j',m}} > T^1_{m+k_{j'}-1} > T^1_{m+k_{j}-1},
$$
so $\setA{T}{j} = \emptyset$. By definition $\bstau{T}{j} = 0$, thus $T \in \ker(\partial^1)$.
\smallskip

Let us show  $\ker(\partial^1) \subseteq \injhull(\calV_\alpha)$.
Suppose that there exists $x \in \ker(\partial^1) \setminus \injhull(\calV_\alpha)$.
Let $x = \sum_{T \in \SRT(\balalp)} c_T T$ with $c_T \in \C$.
Since $\partial^1(T) = 0$ for all $T$ satisfying that $T_j^{1+\delta_{j,m}} > T^1_{m+k_j-1}~(1\leq j \leq m)$, all $T$'s in the expansion of $x$ are contained in $\Theta(\calV_\alpha)$ (see~\eqref{Eq: generators of cosyzygy}).
Define
\[
\supp(x) := \{T \in \Theta(\calV_\alpha) \mid c_T \neq 0\}
\]
and
choose any tableau $U$ in $\supp(x)$ such that $\bfw(U)$ is maximal in $\{\bfw(T): T \in \supp(x)\}$
with respect to the Bruhat order.
Let
\begin{align*}
J & := \{j \in [m] \mid \setA{U}{j} \neq \emptyset \} \text{ and}\\
\tau_0 &:= \tau_{U;\max(J);A^1_{U; \max(J)}}.
\end{align*}
It should be noted that $J$ is nonempty because $U \in \Theta(\calV_\alpha)$ and the coefficient of $\tau_0$ is nonzero in the expansion of $\partial^1(U)$ in terms of $\bigcup_{1 \le j \le m} \SRT(\balalpp{j})$.
Note that $\partial^1(x) = \partial^1(c_U U) + \partial^1(x - c_U U)$
and
\begin{align*}
\partial^1(x - c_U U)
& = \sum_{T \in \supp(x) \setminus \{U\}} c_T
\left( \sum_{1\le j \le m} \bstau{T}{j} \right) \\
& = \sum_{T \in \supp(x) \setminus \{U\}} c_T
\left( \sum_{1\le j \le m} \sum_{A \in \setA{T}{j}} \sgn(A) \tau_{T;j;A} \right).
\end{align*}
We claim that there is no triple $(T,j,A)$ with $T \in \supp(x) \setminus \{U\}$, $1\le j \le m$, and $A \in \setA{T}{j}$ such that $\tau_{T;j;A} = \tau_0$.
Suppose not, that is, $\tau_0 = \tau_{T;j;A}$ for some $(T,j,A)$.
Comparing the shapes of $\tau_0$ and $\tau_{T;j;A}$, we see that $j$ must be $\max(J)$.
Let $\bfw(T) = w_1 w_2 \cdots w_n$.
According to the definition of $\bfw_{T;\max(J)}$ in~\eqref{Eq: w(T;j)}, it is a decreasing subword $w_{u_1} w_{u_2} \cdots w_{u_l}$ of $\bfw(T)$
subject to the conditions:
\begin{align}\label{eq: w_u comparing}
w_{u_r} < w_{i} \quad \text{for all $1 \le r < l$ and $u_r < i <u_{r+1}$.}
\end{align}
Since $\tau_{T;\max(J);A} = \tau_0$, one has that
\[
\bfw(T) =
\bfw(U) \cdot (u_{1}~u_l) (u_{1}~u_{l-1}) \cdots (u_1~u_2),
\]
where $\bfw(T), \bfw(U)$ are viewed as permutations and $(a~b)$ denote a transposition.
For $\sigma \in \SG_n$ and $a,b \in [n]$,
it is stated in \cite[Lemma 2.1.4]{05BB} that $\sigma \prec \sigma \cdot (a \ b)$ and $\ell(\sigma \cdot (a \ b)) = \ell(\sigma) + 1$ if and only if $\sigma(a) < \sigma(b)$ and there is no $c$ such that $\sigma(a) < \sigma(c) < \sigma(b)$. Here $\prec$ is the Bruhat order.
Combining this with~\eqref{eq: w_u comparing} yields that $\bfw(U) \prec \bfw(T)$.
This contradicts the maximality of $U$, thus our claim is verified.
It tells us that the coefficient of $\tau_0$ in the expansion of $\partial^1(x)$ in terms of $\bigcup_{1 \le j \le m} \SRT(\balalpp{j})$ is nonzero, which is absurd by the assumption that
$x \in \ker(\partial^1)$.
Consequently, we can conclude that there is no $x \in \ker(\partial^1) \setminus \injhull(\calV_\alpha)$.
\medskip

(c) Observe the following $H_n(0)$-module isomorphisms:
\begin{align*}
\soc \left(\bigoplus_{1 \leq j \leq m}\bfP_{\balalpp{j}} \right)
& \underset{\text{Theorem~\ref{thm: bfP isom to calP}}}{\cong} \bigoplus_{1 \leq j \leq m} \bigoplus_{\beta \in [\balalpp{j}]} \soc(\bfP_{\beta})
\cong \bigoplus_{1 \leq j \leq m}\bigoplus_{\beta \in [\balalpp{j}]} \C T^{\leftarrow}_{\beta} \\
& \hspace{0.55ex} \underset{\text{Lemma~\ref{Lem: nessary lemmas II}}}{\cong}
\C \left(
\opi_{\longeltt{\beta}{j}} \cdot \TLba{\beta}{\balalpp{j}}
\right)
\end{align*}
Hence our assertion can be verified by showing that
$\opi_{\longeltt{\beta}{j}} \cdot \TLba{\beta}{\balalpp{j}} \in \mathrm{Im}(\opone)$ for $1 \le j \le m$ and $\beta \in [\balalpp{j}]$.
Let us fix $j \in [m]$ and $\beta \in [\balalpp{j}]$.
To begin with, we note that
\begin{equation}\label{Eq: 616}
\opi_{\longeltt{\beta}{j}} \cdot \TLba{\beta}{\balalpp{j}}
 = \sum_{\sigma \preceq_L \longeltt{\beta}{j}} (-1)^{\ell(\longeltt{\beta}{j}) - \ell(\sigma)} \pi_\sigma \cdot \TLba{\beta}{\balalpp{j}}.
\end{equation}
According to the definition of $\longeltt{\beta}{j}$, we divide into the following two cases.
\smallskip

{\it Case 1: $\min(\tJ{\beta}{\balalpp{j}}) > \ell(\alpha)$.}
For $\sigma \preceq_L  \longeltt{\beta}{j} = \paralong{\htJ{\beta}{\balalpp{j}}}$,
it holds that
\begin{align}\label{eq: kappa j'}
\begin{aligned}
& \sigmatabsh{\sigma}^{1+\delta_{j,m}}_{j} = |\tS'_{k_{-1}}|+1, \\
& \sigmatabsh{\sigma}^1_{m+k_{j}-1}=|\tS'_{k_{-1}}|+2 \text{ and } \\
& \sigmatabsh{\sigma}^{1+\delta_{j',m}}_{j'} > \sigmatabsh{\sigma}^1_{m+k_{j'}-1} \qquad \text{ if }1 \leq j' \leq m \text{ and } j'\neq j.
\end{aligned}
\end{align}
Moreover, the definition of $\sigmatabsh{\sigma}$ says that
\begin{align}\label{eq: P(A_kappa j')}
& \setA{\sigmatabsh{\sigma}}{j} = \left\{A^1:=\left[|\tS'_{k_{-1}}|+2,|\tS'_{k_{-1}}|+|\tS'_{k_0}|\right]\right\}.
\end{align}
Putting these together, we can derive the following equalities:
\begin{equation}
\begin{aligned}\label{eq: image of kappa under the partial1}
\opone(\sigmatabsh{\sigma} + \injhull(\calV_\alpha))
&= \sum_{1 \leq r \leq m}\bstau{\sigmatabsh{\sigma}}{r}\\
&=\bstau{\sigmatabsh{\sigma}}{j} 
&\text{(by \eqref{eq: kappa j'})}\;\\
&=\tau_{\sigmatabsh{\sigma};j;A^1} 
&\text{(by \eqref{eq: P(A_kappa j')})}.
\end{aligned}
\end{equation}
Since $\tau_{\sigmatabsh{\sigma};j;A^1} = \pi_\sigma \cdot \tau_{\sigmatabsh{\id};j;A^1}$ and $\tau_{\sigmatabsh{\id};j;A^1} = \TLba{\beta}{\balalpp{j}}$,
we see that
\begin{align}\label{eq: tau kappa sigma}
\opone(\sigmatabsh{\sigma} + \injhull(\calV_\alpha))
 = \pi_\sigma \cdot \TLba{\beta}{\balalpp{j}}.
\end{align}
Finally, putting \eqref{Eq: 616} and \eqref{eq: tau kappa sigma} together yields that
\begin{align*}
\opi_{\longeltt{\beta}{j}} \cdot \TLba{\beta}{\balalpp{j}} =
\sum_{\sigma \preceq_L \longeltt{\beta}{j}}
(-1)^{\ell(\longeltt{\beta}{j}) - \ell(\sigma)}
\, \opone(\sigmatabsh{\sigma} + \injhull(\calV_\alpha)),
\end{align*}
which verifies the assertion.
\smallskip

{\it Case 2: $\min(\tJ{\beta}{\balalpp{j}}) \le \ell(\alpha)$.}
Let $\sigma \preceq_L \longeltt{\beta}{j}$.
Since
\[\longeltt{\beta}{j} = \pqlong{[\ell(\alpha)]}{|\tS'_{k_0}|}
\cdot
\paralong{\htJ{\beta}{\balalpp{j}}} \text{ and }\min(\htJ{\beta}{\balalpp{j}}) > \ell(\alpha) + 1,\]
we can write $\sigma$ as $\sigma' \sigma''$ for some
$\sigma' \in \SGG{n}{\htJ{\beta}{\balalpp{j}}}$ and $\sigma'' \in \SGGG{n}{[\ell(\alpha)]}{(|\tS'_{k_0}|)}$.
Therefore, the right hand side of~\eqref{Eq: 616} can be rewritten as
\begin{align}\label{Eq: sum over preceq w0}
\sum_{\sigma \preceq_L \longeltt{\beta}{j}}
(-1)^{ \ell(
\paralong{\htJ{\beta}{\balalpp{j}}}) + \ell(
\pqlong{[\ell(\alpha)]}{|\tS'_{k_0}|}) -(\ell(\sigma') + \ell(\sigma''))} \pi_{\sigma'} \pi_{\sigma''} \cdot \TLba{\beta}{\balalpp{j}}.
\end{align}
Since $\{\sigma \in \SG_n \mid \sigma \preceq_L \longeltt{\beta}{j}\}$ can be decomposed into
\[
\bigsqcup_{\sigma' \in \SGG{n}{\htJ{\beta}{\balalpp{j}}}}
\bigsqcup_{\sigma'' \in \SGGG{n}{[\ell(\alpha)]}{(|\tS'_{k_0}|)}} \{\sigma' \sigma''\},
\]
\eqref{Eq: sum over preceq w0} can also be rewritten as
\begin{align}\label{eq: sum of pi sig' pi sig''}
\sum_{\sigma' \in \SGG{n}{\htJ{\beta}{\balalpp{j}}}}
(-1)^{\calN(\sigma')}
\pi_{\sigma'}
\underbrace{\sum_{\sigma'' \in \SGGG{n}{[\ell(\alpha)]}{(|\tS'_{k_0}|)}}
(-1)^{\calM(\sigma'')} \pi_{\sigma''} \cdot \TLba{\beta}{\balalpp{j}}}_{(\mathsf{P})}.
\end{align}
Here we are using the notation
\begin{align*}
\calN(\sigma') := \ell(\paralong{\htJ{\beta}{\balalpp{j}}}) - \ell(\sigma')
\quad \text{ and } \quad
\calM(\sigma'') := \ell(\pqlong{[\ell(\alpha)]}{|\tS'_{k_0}|}) - \ell(\sigma'').
\end{align*}
Note that
$\ell(\alpha)-|\tS'_{k_0}|+2 = |\tS'_{k_{-1}}|+1$
since $\ell(\alpha)+1 = |\tS'_{k_0}|+|\tS'_{k_{-1}}|$.
In view of \eqref{Eq: Bij PBQ} and \eqref{Eq: Delta leq Omega}, we see that the summation $(\mathsf{P})$
in \eqref{eq: sum of pi sig' pi sig''} equals
\begin{align*}
\sum_{1 \le u \le |\tS'_{k_{-1}}|+1}
\sum_{\zeta \in \PBQ{n}{u+1,\ell(\alpha)}{|\tS'_{k_0}|+u-1}}
(-1)^{\calM(\zeta \deltau)}
\pi_{\zeta} \pi_{\deltau}
\cdot \TLba{\beta}{\balalpp{j}}.
\end{align*}
For each $1 \le u \le |\tS'_{k_{-1}}|+1$, we claim that
\begin{align*}
\sum_{\zeta \in \PBQ{n}{u+1,\ell(\alpha)}{|\tS'_{k_0}|+u-1}}
(-1)^{\calM(\zeta \deltau)} \pi_{\zeta \deltau} \cdot \TLba{\beta}{\balalpp{j}}
= (-1)^{|\tS'_{k_{-1}}|-u} \partial^1(\sigmatabsh{\deltau}),
\end{align*}
which will give rise to
\begin{align*}
\opi_{\longeltt{\beta}{j}} \cdot \TLba{\beta}{\balalpp{j}} \in \rmIm(\partial^1).
\end{align*}

The last of the proof will be devoted to the verification of this claim.
We fix $u \in [1,|\tS'_{k_{-1}}|+1]$ and observe that
\begin{align*}
&\sigmatabsh{\deltau}(\tS_{k_0})
=[\ell(\alpha) + 1]\setminus\{u\} \text{ and}\\
&\min
\left(\sigmatabsh{\deltau}(\tS_{k_{j'}})\right) > \ell(\alpha) + 1 \quad \text{if $1 \leq j' \leq m$ and $j' \neq j$}.
\end{align*}
This implies that $\sigmatabsh{\deltau}^{1+\delta_{j',m}}_{j'} > \sigmatabsh{\deltau}^1_{m+k_{j'}-1}$, and therefore
\begin{align}\label{eq: expansion of partial1}
\partial^1(\sigmatabsh{\deltau})
= \bstau{\sigmatabsh{\deltau}}{j}
= \sum_{A \in \setA{\sigmatabsh{\deltau}}{j}} \sgn(A) \tau_{\sigmatabsh{\deltau};j;A}.
\end{align}
Combining Lemma~\ref{Lem: tausigmabetaj} with~\eqref{Eq: bstau neq 0} shows that the summation
given in the last term is non-zero.
In what follows, we transform this summation
into a form suitable for proving our claim.
For this purpose, we need to analyze $\setA{\sigmatabsh{\deltau}}{j}$.
Since
$\tA_{\sigmatabsh{\deltau};j}  = [u+1,\ell(\alpha)+1]$ and $\ell(\alpha)-k_j+1 = |\tS'_{k_0}| - 1$,
it follows that
\begin{align*}
\setA{\sigmatabsh{\deltau}}{j}=\binom{[u+1,\ell(\alpha)+1]}{|\tS'_{k_0}| - 1}.
\end{align*}
Thus we have the natural bijection
\[
\psi: \setA{\sigmatabsh{\deltau}}{j} \ra (\SG_n)^{(|\tS'_{k_0}|-1)}_{[\ell(\alpha)-u]},
\quad A = \{a_1 < a_2 < \cdots < a_{|\tS'_{k_0}|-1}\} \mapsto \psi(A),
\]
where $\psi(A)$ denotes the permutation in $(\SG_n)^{(|\tS'_{k_0}|-1)}_{[\ell(\alpha)-u]}$ such that
$\psi(A)(i) = a_i - u$ for $1 \leq i \leq |\tS'_{k_0}|-1$.
Recall that there is a natural right $\SG_{|\tA_{\sigmatabsh{\deltau};j}|}$-action on $\tA_{\sigmatabsh{\deltau};j}$ given by~\eqref{eq: SG action on ATj}.
Put
$$
\AZero:=[u+1,u+|\tS'_{k_0}|-1].
$$
Since $|\tA_{\sigmatabsh{\deltau};j}| = \ell(\alpha) - u + 1$, we may identify $\SG_{|\tA_{\sigmatabsh{\deltau};j}|}$ with $(\SG_n)_{[\ell(\alpha)-u]}$.
Note that $\psi(A)$ is the unique permutation in $(\SG_n)^{(|\tS'_{k_0}| - 1)}_{[\ell(\alpha)-u]}$ that gives $A^0$ when acting on $A$, that is, $A \cdot \psi(A) = A^0$.
Since
\[
\AZero \cdot \psi(A)^{-1}
= \left(A^1_{\sigmatabsh{\deltau};j}\cdot
\pqlong{[\ell(\alpha)-u]}{|\tS'_{k_0}|-1}
^{-1}\right) \cdot \psi(A)^{-1},
\]
we have that
\begin{align*}
\sgn(A) = (-1)^{\ell(\pqlong{[\ell(\alpha)-u]}{|\tS'_{k_0}|-1}) - \ell(\psi(A))}.
\end{align*}
Applying this identity to~\eqref{eq: expansion of partial1} yields that
\begin{align}\label{eq: partial 1 expansion to sigmatabsh}
\partial^1(\sigmatabsh{\deltau})
& =
\sum_{A \in \setA{\sigmatabsh{\deltau}}{j}}
(-1)^{\ell(\pqlong{[\ell(\alpha)-u]}{|\tS'_{k_0}|-1}) -\ \ell(\psi(A))}
\tau_{\sigmatabsh{\deltau};j;A}.
\end{align}
Consider the bijection
$$
\shiftu: (\SG_n)^{(|\tS'_{k_0}| - 1)}_{[\ell(\alpha) - u]} \ra
(\SG_n)^{(|\tS'_{k_0}|-1+u)}_{[u+1,\ell(\alpha)]},
\quad
s_i \mapsto s_{i + u}.
$$
%%%%%%%%%%%%%%%%%%%%%%%%%%%%%%%%%%%%%%%%%%%%%%
From the constructions of $\sigmatabsh{\deltau}$ and $\tau_{\sigmatabsh{\deltau};j;\AZero}$ we can derive the identities:
\begin{align}\label{eq: tausigmatabsh A expansion}
\begin{aligned}
\tau_{\sigmatabsh{\deltau};j;A}
 = \tau_{\sigmatabsh{\deltau};j;(\AZero\cdot \psi(A)^{-1})}
 = \pi_{\shiftu(\psi(A))} \cdot \tau_{\sigmatabsh{\deltau};j;\AZero}  = \pi_{\shiftu(\psi(A))} \pi_{\deltau} \cdot \TLba{\beta}{\balalpp{j}}.
\end{aligned}
\end{align}
As a consequence,
\begin{align*}
\partial^1(\sigmatabsh{\deltau})
%%%%%%%%%%%%%%%%%%%%%%%%%%%%%%
&
\overset{\eqref{eq: partial 1 expansion to sigmatabsh}}{=}
\sum_{A \in \setA{\sigmatabsh{\deltau}}{j}}
(-1)^{\ell(
\pqlong{[\ell(\alpha)-u]}{|\tS'_{k_0}|-1}
) -\ \ell(\psi(A))}
\tau_{\sigmatabsh{\deltau};j;A} \\
%%%%%%%%%%%%%%%%%%%%%%%%%%%
&
\overset{\eqref{eq: tausigmatabsh A expansion}}{=}
\sum_{A \in \setA{\sigmatabsh{\deltau}}{j}}
(-1)^{\ell(
\pqlong{[\ell(\alpha)-u]}{|\tS'_{k_0}|-1}
)-\ell(\shiftu(\psi(A)))}
\pi_{\shiftu(\psi(A))}\pi_{\deltau} \cdot \TLba{\beta}{\balalpp{j}}.
\end{align*}
Making use of the bijection
$\shiftu \circ \psi: \setA{\sigmatabsh{\deltau}}{j} \ra
(\SG_n)^{(|\tS'_{k_0}|-1+u)}_{[u+1,\ell(\alpha)]}$, we can rewrite the second summation as
\begin{align}\label{eq: G circ tphi transform}
\begin{aligned}
\sum_{\xi \in (\SG_n)^{(|\tS'_{k_0}|-1+u)}_{[u+1,\ell(\alpha)]}}
(-1)^{\ell(
\pqlong{[\ell(\alpha)-u]}{|\tS'_{k_0}|-1}
)-\ell(\zeta)}
\pi_{\zeta} \pi_{\deltau} \cdot \TLba{\beta}{\balalpp{j}}.
\end{aligned}
\end{align}
Note that
\begin{align*}
\ell\left(
\pqlong{[\ell(\alpha)-u]}{|\tS'_{k_0}|-1}
\right) -\ell(\zeta)
&=(|\tS'_{k_0}|-1)(\ell(\alpha)-u-|\tS'_{k_0}|+1) - \ell(\zeta)\\
&=(|\tS'_{k_0}|-1)(|\tS'_{k_{-1}}|-u) - \ell(\zeta)\\
&= \calM(\zeta\deltau) - |\tS'_{k_{-1}}|+u.
\end{align*}
By substituting $\calM(\zeta\deltau) - |\tS'_{k_{-1}}|+u$ for $\ell\left(
\pqlong{[\ell(\alpha)-u]}{|\tS'_{k_0}|-1}\right) -\ell(\zeta)$
in~\eqref{eq: G circ tphi transform},
we finally obtain that
\[
\partial^1(\sigmatabsh{\deltau}) =
(-1)^{|\tS'_{k_{-1}}| - u}
\sum_{\zeta \in (\SG_n)^{(|\tS'_{k_0}|-1+u)}_{[u+1,\ell(\alpha)]}}
(-1)^{\calM(\zeta\deltau)}
\pi_{\zeta} \pi_{\deltau} \cdot \TLba{\beta}{\balalpp{j}},
\]
as required.

(d)
It is well known that
\[\Ext_{H_n(0)}^1(\bfF_{\beta},\calV_\alpha)={\rm Hom}_{H_n(0)}(\bfF_{\beta}, \Omega^{-1}(\calV_\alpha))\]
(see \cite[Corollary 2.5.4]{91Benson}).
This immediately yields that
\[
\dim\Ext_{H_n(0)}^1(\bfF_{\beta},\calV_\alpha)=
[{\rm soc}(\Omega^{-1}(\calV_\alpha)):\bfF_{\beta}].
\]
By (c), one sees that ${\rm soc}(\Omega^{-1}(\calV_\alpha))$
equals the socle of $\bigoplus_{1 \leq j \leq m} \bfP_{\balalpp{j}}$.
So we are done.
\end{proof}

\section{Further avenues}

(a) For each $\alpha \models n$, let
\begin{equation}\label{eq: proj pres for F}
\begin{tikzcd}
P_1 \arrow[r, "\partial_1"] & \bfP_\alpha \arrow[r, "\epsilon"] & \bfF_\alpha \arrow[r] & 0
\end{tikzcd}
\end{equation}
be a minimal projective presentation of $\bfF_{\alpha}$.
From~\cite[Corollary 2.5.4]{91Benson} we know that $\dim\Ext_{H_n(0)}^1(\bfF_\alpha,\bfF_{\beta})$ counts the multiplicity of $\bfP_{\beta}$ in
the decomposition of $P_1$ into indecomposable modules, equivalently,
\begin{equation*}
P_1 \cong \bigoplus_{\beta \models n} \bfP_{\beta}^{\dim \Ext^1_{H_n(0)}(\bfF_{\alpha},\bfF_{\beta})}.
\end{equation*}
This dimension has been computed in~\cite[Section 4]{02DHT} and \cite[Theorem 5.1]{05Fayers}.
However, to the best of the authors' knowledge, no description for $\partial_1$ has not been available yet.
It would be nice to find an explicit description of $\partial_1$, especially in a combinatorial manner.
If this is done successfully, by taking an anti-automorphism twist introduced in \cite[Section 3.4]{21JKLO} to~\eqref{eq: proj pres for F}, we can also derive a minimal injective presentation for  $\bfF_{\alpha}$.

(b)
Besides dual immaculate functions, the problem of constructing $H_n(0)$-modules has been considered for the following quasisymmetric functions:
the \emph{quasisymmetric Schur functions} in ~\cite{15TW,19TW},
the \emph{extended Schur functions} in ~\cite{19Searles},
the \emph{Young row-strict quasisymmetric
Schur functions} in ~\cite{22BS},
the \emph{Young quasisymmetric Schur functions} in \cite{20CKNO2}, and the images of all these quasisymmetric functions under certain involutions on $\Qsym$ in \cite{21JKLO}.
Although these modules are built in a very similar way, their homological properties have not been well studied.
The study of their projective and injective presentations will be pursued in the near future with appropriate modifications to the method used in this paper.

(c) By virtue of Lemma~\ref{dimension of Hom} and Lemma~\ref{F-expansion od dual immaculate}, we have a combinatorial description for $\dim\Hom_{H_n(0)}(\bf P_\alpha, \calV_\beta)$.
However, no similar one is known for $\dim\Hom_{H_n(0)}(\calV_\alpha, \calV_\beta)$ except when $\beta \le_l \alpha$. It would be interesting to find such a description that holds for all $\alpha, \beta \models n$.

\vspace*{10mm}

\end{document}